\documentclass{amsart}
\usepackage{amssymb, amsmath, tikz-cd, amscd}
\usepackage{stmaryrd}
\usepackage{xcolor}

\title[Variation of the canonical height]{Variation of the canonical height in a family of polarized dynamical systems}
\author{Patrick Ingram}
\address{York University, Toronto, Canada}
\email{pingram@yorku.ca}

\renewcommand{\epsilon}{\varepsilon}
\renewcommand{\phi}{\varphi}

\newcommand{\PP}{\mathbb{P}}
\newcommand{\ZZ}{\mathbb{Z}}

\newcommand{\RR}{\mathbb{R}}
\newcommand{\QQ}{\mathbb{Q}}
\renewcommand{\AA}{\mathbb{A}}

\newcommand{\ord}{\operatorname{ord}}

\newcommand{\hgeom}{h^{\mathrm{geom}}}
\newcommand{\harith}{h^{\mathrm{arith}}}
\newcommand{\href}{h^{\mathrm{ref}}}
\newcommand{\htot}{h^{\mathrm{total}}}

\newtheorem{lemma}{Lemma}

\newtheorem{theorem}[lemma]{Theorem}
\newtheorem{corollary}[lemma]{Corollary}

\theoremstyle{definition}

\begin{document}
\begin{abstract}
Call and Silverman introduced the canonical height $\hat{h}_{f}$ associated to an endomorphism $f:X\to X$ of a projective variety and an ample  $L\in\operatorname{Pic}(X)$ satisfying $f^*L\cong L^{\otimes d}$ for some $d\geq 2$. They also presented an asymptotic for the variation of this height in a family over the one-dimensional base $B$ in terms of the height on the generic fibre and the height of the parameter, namely
\[\hat{h}_{f_t}(P_t)=\hat{h}_{f}(P)h_B(t)+o(h_B(t)),\]
where $o(x)/x\to 0$ as $x\to\infty$. Here we save a power in the error term, giving an effective estimate
\[\hat{h}_{f_t}(P_t)=\hat{h}_{f}(P)h_B(t)+O\left(h_B(t)^{2/3}\right)\]
in general, and
\[\hat{h}_{f_t}(P_t)=\hat{h}_{f}(P)h_B(t)+O\left(h_B(t)^{1/2}\right)\] when $B$ is rational.
 As a corollary, we give an explicit bound on the height of parameters $t\in B$ for which $P_t$ is preperiodic for $f_t$, in the case that $X=\PP^N$ and $B=\PP^1$.
\end{abstract}
\maketitle

\section{Introduction}

The canonical height associated to a polarized dynamical system is a fundamental tool in arithmetic dynamics. To a projective variety $X$ with a line bundle $L$, we may associate a Weil height $h$ which depends on various choices of defining equations. If $f$ is an endomorphism of $X$ with $f^*L\cong L^{\otimes d}$ for some $d\geq 2$, and $L$ is ample, then we say that $(f, X, L)$ is \emph{polarized}, and we may associate a \emph{canonical height} $\hat{h}_{f}$ satisfying
\[\hat{h}_{f}(f(P))=d \hat{h}_{f}(P)\quad\text{ and }\quad \hat{h}_{f}(P)=h(P)+O(1),\]
where $h$ is any Weil height on $X$ with respect to $L$.
This construction is due to Call and Silverman~\cite{MR1255693} in the general case, but see also work of  Denis~\cite{MR1183402}, and Zhang~\cite{MR1311351}, all building on classical work of N\'{e}ron and Tate.
Over a number field, sets of bounded ample height are finite, and so we may deduce, as a quick application, the finiteness of the set of rational preperiodic points for $f$.

 Now let $(f, X, L)$ be a family of polarized dynamical systems over a curve $B$, and let $P:B\to  X$. On all but finitely many fibres $(f_t, X_t, L_t)$ we may construct a canonical height, and Call and Silverman also showed~\cite[Theorem~4.1]{MR1255693} that
\begin{equation}\label{eq:csvar}\hat{h}_{f_t}(P_t)=\hat{h}_{f}(P)h_B(t)+o(h_B(t)),\end{equation}
 where $\hat{h}_{f}(P)$ is the canonical height on the generic fibre,  where $h_B$ is a Weil height on $B$ with respect to any divisor of degree one, and where $o(x)/x\to 0$ as $x\to\infty$. Naturally, Call and Silverman ask~\cite[p.~184]{MR1255693} whether one might improve upon~\eqref{eq:csvar}.

In the case where $X=E$ is an elliptic curve (and so the canonical height is the N\'{e}ron-Tate height), Tate~\cite{MR692114} improved this estimate by establishing a result of the form
\[\hat{h}_{f_t}(P_t)=\hat{h}_{f}(P)h_B(t)+O(h_B(t)^{1/2}),\]
with a further improvement to $O(1)$ in the error when $B$ is rational. Note that this is, in some sense, the best possible error term for an arbitrary height function $h_B$.

In the case $X=\PP^1$ and with $f$ a family of polynomials, the author~\cite{MR3181564} proved analogous results, and this was extended to endomorphisms of $\PP^N$ with fixed behaviour on a totally invariant hyperplane~\cite{MR3436155}. Still in the case $X=\PP^1$,  Ghioca, Hsia, and Tucker~\cite{MR3335283} proved a similar result for rational functions with a super-attracting fixed point in the case $\hat{h}_f(P)\neq 0$. Generalizing work of Ghioca and Mavraki~\cite{MR3158237}, Mavraki and Ye~\cite{my} established the same for rational functions in the case $X=B=\PP^1$, but under the additional hypothesis that the pair $(f, P)$ is \emph{quasi-adelic}, a condition which is not known to always hold. Related results also exist for H\'{e}non maps~\cite{MR3180596}, Drinfeld modules~\cite{MR3257554}, and dynamical correspondences~\cite{MR3885169}. As well, the aforementioned result of Tate has been sharpened by DeMarco and Mavraki~\cite{dm} to show that $\hat{h}_{E_t}(P_t)$ is precisely a height induced by an adelically metrized line bundle on the base,  while the case of single-variables polynomials from~\cite{MR3181564} was subsequently similarly strengthened by Favre and Gauthier~\cite{MR3864204}.

Our main result here is a weaker  savings in the error term of the estimate, but in the general case.
\begin{theorem}\label{th:main}
Let $(f, X, L)$ be a family of polarized dynamical systems over the curve $B$, defined over a number field $K$, and let $h_B$ be a degree-one Weil height on $B$. Then  we have
\[\hat{h}_{f_t}(P_t)=\hat{h}_{f}(P)h_B(t)+O\left(h_B(t)^{\frac{2}{3}}\right)\]
as $h_B(t)\to\infty$.
When $B$ is rational we have the further improvement
\[\hat{h}_{f_t}(P_t)=\hat{h}_{f}(P)h_B(t)+O\left(h_B(t)^{\frac{1}{2}}\right).\]
\end{theorem}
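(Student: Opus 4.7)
My plan is to refine the telescoping series defining the canonical height by introducing a truncation parameter $n$, which is then optimised against $h_B(t)$ to balance two competing error terms, producing a sub-linear savings over the Call--Silverman bound.

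First I would spread the family out to an integral projective model $\pi: \mathcal{X} \to \mathcal{B}$ over $\Spec \mathcal{O}_K$, with $\mathcal{L}$ a relatively ample extension of $L$ such that $f^*\mathcal{L} \cong \mathcal{L}^{\otimes d}$ generically, which globally holds up to a vertical correction supported on finitely many bad fibres. Writing $h$ for an associated Weil height, one obtains a uniform estimate
\[\bigl|h\bigl(f_t(Q)\bigr) - d \cdot h(Q)\bigr| \leq C\bigl(1 + h_B(\pi(Q))\bigr)\]
on $\mathcal{X}(\overline{K})$, and since $\pi$ is $f$-invariant, a standard telescoping argument then yields
\[\left|\hat{h}_{f_t}(P_t) - \frac{h(f_t^n(P_t))}{d^n}\right| \ll \frac{1 + h_B(t)}{d^n}\]
for every $n \geq 0$.

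Next I would compare $h(f_t^n(P_t))$ with the geometric prediction coming from the iterated section $Q_n := f^n \circ P : \mathcal{B} \to \mathcal{X}$. Pulling $\mathcal{L}$ back along $Q_n$ gives a line bundle on $\mathcal{B}$ of degree $d_n$, and height-machine inequalities on curves provide an estimate
\[\bigl|h(Q_n(t)) - d_n \cdot h_B(t)\bigr| \ll H(Q_n),\]
where $H(Q_n)$ captures the arithmetic complexity of $Q_n$ as a morphism into a fixed ambient projective space. The convergence $d_n / d^n \to \hat{h}_f(P)$ with rate $O(1/d^n)$ follows from the canonical-height convergence on the generic fibre (applied to a Weil height representing $\deg P^*\mathcal{L}$). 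Combining everything produces
\[\bigl|\hat{h}_{f_t}(P_t) - \hat{h}_f(P)h_B(t)\bigr| \ll \frac{h_B(t)}{d^n} + \frac{H(Q_n)}{d^n}.\]

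The heart of the argument is bounding $H(Q_n)$, and this is the step I expect to be most delicate. Once a bound of the form $H(Q_n) \leq A \cdot d^{cn}$ for an appropriate exponent $c > 1$ is in hand, choosing $d^n$ so that the two error terms agree gives a final bound of order $h_B(t)^{1-1/c}$. For general $B$, controlling $H(Q_n)$ via generic projection and standard bounds on the heights of iterated compositions should produce $c = 3$, yielding the exponent $2/3$; when $B$ is rational, explicit coordinates on $\PP^1$ let one avoid the loss incurred by a generic projection and sharpen the bound to $c = 2$, producing the exponent $1/2$. Once these complexity estimates are established, the balancing step itself is routine.
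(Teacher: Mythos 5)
Your outline follows essentially the same route as the paper: the same three-term decomposition (fibre-wise comparison of $\hat{h}_{f_t}$ with a Weil height, comparison of the specialized height of the section $Q_n=f^n\circ P$ with its geometric degree times a base height, and the function-field comparison of $\deg(Q_n^*L)$ with $d^n\hat{h}_f(P)$), applied to an iterate and then optimized by balancing $d^n$ against $h_B(t)$; the complexity exponents you predict ($c=3$ in general, $c=2$ over $\PP^1$) are exactly those the paper establishes. Over a rational base your plan is therefore the paper's plan, with the caveat that the two estimates you defer are the entire substance: the growth bound $\hgeom(f^n(P)),\,\harith(f^n(P))\ll d^n$ (Lemma~\ref{lem:growth}, which needs some care with common factors appearing under composition), and above all the comparison $\left|h(Q_n(t))-\deg(Q_n^*L)\,h_B(t)\right|\ll \hgeom(Q_n)\harith(Q_n)+\cdots$ with an implied constant independent of $n$ and $t$. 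The latter is not a ``standard'' height-machine inequality --- the standard statement carries an ineffective constant depending on the morphism --- and is obtained in the paper by explicit elimination of variables (Cramer's rule / an effective Nullstellensatz, Lemma~\ref{lem:explicitspec}).

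Over a base of genus at least one there is a genuine gap. Your central inequality $\left|h(Q_n(t))-d_n\,h_B(t)\right|\ll H(Q_n)$, with a right-hand side independent of $t$, is false as stated: $h\circ Q_n$ is a height attached to the class of $Q_n^*\mathcal{L}$, while $d_n h_B$ is a height attached to a class that agrees with it only in degree, and two heights attached to classes differing by a nontrivial element of $\operatorname{Pic}^0(B)$ differ by an unbounded amount, of order $\sqrt{h_B(t)}$ with a constant depending on the classes --- hence on $n$. Since your balancing step sends $n\to\infty$ with $h_B(t)$, you would have to control how this quasi-equivalence constant grows with $n$, which is not routine and is not supplied by ``generic projection''. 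The paper avoids the comparison altogether: it fixes reference heights attached to explicit divisors $D_n$ in the class of $Q_n^*\mathcal{O}(1)$, all supported on one finite set (Lemma~\ref{lem:gooddiv}), proves $\left|h(Q_n(t))-\href_{B,D_n}(t)\right|\ll d^{3n}$ via presentations and an explicit Bombieri--Gubler bound, shows $\left|\href_{B,D_n}-d^n\href_{B,D(F,P)}\right|\ll h_{B,A}(t)$ uniformly in $n$ (Lemma~\ref{lem:hrefe}), and pays the $O\bigl(\sqrt{h_B(t)}\bigr)$ quasi-equivalence cost only once, at the end, for the fixed limit $\RR$-divisor $D(F,P)$. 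Without an ingredient playing this role, your error term in the general case cannot be made uniform in $n$, and the optimization producing the exponent $2/3$ does not go through.
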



As already remarked in~\cite{MR1255693}, the asymptotic \eqref{eq:csvar} ensures that when $\hat{h}_f(P)>0$, the $t\in B(\overline{K})$ for which $P_t$ is preperiodic for $f_t$ form a set of bounded height. Indeed, the asymptotic obtained has the form
\[\left|\hat{h}_{f_t}(P_t)-\hat{h}_f(P)h_B(t)\right|\leq \epsilon h_B(t)+O_\epsilon(1)\]
for any $\epsilon>0$, where the implied constant is in-principle computable; one need only compute the implied constant for some $0<\epsilon<\hat{h}_f(P)$. 
In practice, though, this depends on some lengthy computations for each example.
In the case of endomorphisms of $X=\PP^N$ over $B=\PP^1$,  we can produce a completely explicit bound depending on relatively natural measures of complexity. For $P_0, ..., P_N\in K[t]$ with no common factor, set
\[\hgeom([P_0:\cdots :P_N])=\max\deg(P_i),\]
let $\harith(P)$ be the height of the projective tuple of coefficients of all of the $P_i$, and set \[\htot(P)=\harith(P)+\hgeom(P).\] For $f:\PP^N\to \PP^N$ defined over $K(t)$, we   define $\htot(f)$ by identifying $f$ with its tuple of coefficients.

\begin{corollary}\label{cor:main}
Let $f:\PP^N\to\PP^N$ be defined over $\PP^1_K$, and let $P\in \PP^N(\PP^1_K)$ such that $\hat{h}_f(P)\neq 0$. If $P_t$ is preperiodic for $f_t$, then
\[h(t)\leq C\max\left\{\frac{(\htot(P)+\htot(f)+1)^4}{\hat{h}_f(P)^2}, \htot(f)+1\right\}\]
where $C$ is an explicit constant depending just on $\deg(f)$ and $N$.
\end{corollary}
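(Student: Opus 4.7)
The plan is to deduce Corollary~\ref{cor:main} directly from the rational-base case of Theorem~\ref{th:main} by observing that preperiodic points have vanishing canonical height. Writing $h=h_B$ for the standard Weil height on $\PP^1$, if $P_t$ is preperiodic for $f_t$ then $\hat{h}_{f_t}(P_t)=0$, so Theorem~\ref{th:main} supplies a constant $C$ with
\[
\hat{h}_f(P)\, h(t) \;\leq\; C\, h(t)^{1/2}.
\]
Rearranging yields $h(t)\leq C^2/\hat{h}_f(P)^2$, so the real task is to produce an explicit polynomial upper bound on $C$ in terms of $\htot(f)$ and $\htot(P)$.

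To extract such a bound, I would revisit the proof of Theorem~\ref{th:main} in the rational case and track, place by place, how the implicit constant depends on the defining coefficients of $f$ and on a homogeneous lift of $P$. Standard arithmetic estimates---resultants, an effective arithmetic Nullstellensatz, or direct manipulations of Mahler measures of the relevant forms---should control each local contribution by a power of $\htot(P)+\htot(f)+1$ with exponent independent of the place. Summed over the finitely many places of bad reduction (whose count is itself controlled by the height of an appropriate resultant, hence by $\htot(P)+\htot(f)$), one expects $C=O\bigl((\htot(P)+\htot(f)+1)^{2}\bigr)$, with implied constant depending only on $N$ and $\deg(f)$. Squaring then produces the displayed fourth power in the numerator.

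A secondary point is that Theorem~\ref{th:main} is only informative on the locus where the family is actually polarized and the section $P$ is well-defined, i.e.\ outside the finite set of $t$ where the tuple of defining forms of $f_t$ acquires a common zero or where the homogeneous coordinates of $P_t$ simultaneously vanish. These degenerate parameters are cut out by a resultant whose height is bounded linearly in $\htot(f)+\htot(P)$, which accounts for the second argument $\htot(f)+1$ of the maximum in the corollary.

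The main obstacle is the careful constant-tracking. The Call--Silverman construction of $\hat{h}_{f_t}$ is a telescoping limit of $d^{-n}h_t(f_t^n(P_t))$, and both the rate of that convergence and the variation estimate of Theorem~\ref{th:main} must be controlled by polynomial functions of $\htot(P)$ and $\htot(f)$, uniformly in $t$ and across all places. Once this bookkeeping is in place, the quadratic shape $C\lesssim(\htot(P)+\htot(f)+1)^{2}$ is consistent with the $h(t)^{1/2}$ savings of Theorem~\ref{th:main}, and the stated form of Corollary~\ref{cor:main} falls out after the single squaring step above.
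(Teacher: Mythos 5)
Your proposal is correct and follows essentially the same route as the paper: apply the explicit inequality behind the rational-base case of Theorem~\ref{th:main} (the paper's~\eqref{eq:techbound}) at a preperiodic parameter where $\hat{h}_{f_t}(P_t)=0$, observe that the constant there is $O\bigl((\htot(P)+\htot(f)+1)^2\bigr)$ with implied constant depending only on $N$ and $d$ (this is exactly what the explicit constants $C_1,C_2,C_5,C_6$ from Lemmas~\ref{lem:famheights} and~\ref{lem:growth} deliver), square to get the fourth power, and dispose of the degenerate parameters where $f_t$ drops degree via the resultant-type bound~\eqref{eq:goodred}, which yields the $\htot(f)+1$ branch of the maximum. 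The bookkeeping you defer is precisely the content of the paper's proof, so no new idea is missing.
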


The question of exactly when we have $\hat{h}_f(P)=0$ is  more subtle in the function field context (i.e., on the generic fibre) than it is over a number field. In the situation $X=\PP^1$, it follows from results of Benedetto~\cite{MR2202175} and Baker~\cite{MR2492995} that $\hat{h}_f(P)=0$ only when $P$ is preperiodic for $f$ (in which case $P_t$ is always preperiodic for $f_t$), or the pair $(f, P)$ is isomorphic over some function field extension to a constant family (in which case $P_t$ either is or isn't preperiodic for $f_t$, independent of $t$). Gauthier and Vigny~\cite{gv} have extended this result to families of polarized dynamical systems in general, showing that if $X$ has no periodic isotrivial subvariety of positive dimension, and $\hat{h}_f(P)=0$, then $P$ is preperiodic for $f$.

Call and Silverman actually defined canonical heights in an even more general setting. If $f:X\to X$ and $L\in \operatorname{Pic}(X)\otimes \RR$ (not necessarily ample) satisfies $f^*L\cong L^{\otimes \alpha}$ with $\alpha>1$ real, the same construction of a canonical height with respect to $L$ goes through, and the same asymptotic~\eqref{eq:csvar} holds. This is a strictly more general setting: Silverman~\cite{MR1115546} has constructed examples of automorphisms of certain K3 surfaces admitting $L\in \operatorname{Pic}(X)\otimes \RR$ with $f^*L\cong L^{\otimes (7+4\sqrt{3})}$. In this context we obtain a slightly weaker result, restricted to the simpler case $B=\PP^1$.
\begin{theorem}\label{th:k3}
Let $X$ be a family of Wehler K3 surfaces defined over $\PP^1_K$, and let $\hat{h}^{\pm}$ be the canonical heights defined by Silverman~\cite{MR1115546}. Then for any $P\in X(\PP^1_K)$ and any $\epsilon>0$, we have
\[\hat{h}^{\pm}_t(P_t)=\hat{h}^\pm(P)h(t)+O\left(h(t)^{\frac{1}{2}+\epsilon}\right).\]	
\end{theorem}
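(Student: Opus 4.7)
The plan is to reduce Theorem~\ref{th:k3} to the techniques underlying Theorem~\ref{th:main} by combining $\hat{h}^+$ and $\hat{h}^-$ into a height attached to an honest ample class. Choose an ample $D\in\operatorname{Pic}(X)$ lying in the $\RR$-span of $D^+$ and $D^-$, and write $D=c_+D^++c_-D^-$ with $c_\pm\in\RR^{\times}$. Since $\alpha+\alpha^{-1}=14\in\ZZ$,
\[f^*D+(f^{-1})^*D=c_+(\alpha+\alpha^{-1})D^++c_-(\alpha+\alpha^{-1})D^-=14\,D,\]
so the combined height $\hat{h}_D:=c_+\hat{h}^++c_-\hat{h}^-$ satisfies the integer-coefficient functional equation $\hat{h}_D(f(P))+\hat{h}_D(f^{-1}(P))=14\hat{h}_D(P)$ and differs by a bounded amount from any Weil height $h_D$ attached to $D$.

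The first step is to adapt the proof of Theorem~\ref{th:main} to this correspondence setting on the family $X\to\PP^1_K$, establishing
\[\hat{h}_{D,t}(P_t)=\hat{h}_D(P)\,h(t)+O\!\left(h(t)^{1/2+\epsilon}\right)\]
for every $\epsilon>0$. The telescoping argument underlying Theorem~\ref{th:main} needs only that iterated pullback expand the polarization by an integer eigenvalue, which is still the case for $f^*+(f^{-1})^*$ with integer factor $14$. The second step applies this bound to the two sections $P$ and $f(P)$, producing the linear system
\[\begin{aligned}
c_+\hat{h}^+_t(P_t)+c_-\hat{h}^-_t(P_t)&=\bigl(c_+\hat{h}^+(P)+c_-\hat{h}^-(P)\bigr)h(t)+O(h(t)^{1/2+\epsilon}),\\
\alpha c_+\hat{h}^+_t(P_t)+\alpha^{-1}c_-\hat{h}^-_t(P_t)&=\bigl(\alpha c_+\hat{h}^+(P)+\alpha^{-1}c_-\hat{h}^-(P)\bigr)h(t)+O(h(t)^{1/2+\epsilon})
\end{aligned}\]
in the unknowns $\hat{h}^\pm_t(P_t)$, whose determinant is $c_+c_-(\alpha^{-1}-\alpha)=-8\sqrt{3}\,c_+c_-\neq 0$. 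Solving this $2\times 2$ system recovers the claimed bound for $\hat{h}^\pm_t(P_t)$ individually.

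The principal obstacle is the correspondence-theoretic generalization of Theorem~\ref{th:main} used in the first step. The proof of Theorem~\ref{th:main} is formulated for polarized morphisms and rests on local height decompositions attached to such morphisms; replicating the argument for $f+f^{-1}$ requires care, since $f^*D$ alone is not a multiple of $D$ and only the symmetrized pullback $f^*D+(f^{-1})^*D$ lies in $\ZZ\cdot D$. The arbitrarily small loss $\epsilon$ in the final exponent should arise at precisely this step, as a byproduct of controlling the asymmetry between the two halves of the correspondence; by contrast, the linear-algebra extraction of $\hat{h}^\pm$ in the second step is a routine consequence of $\alpha\ne\alpha^{-1}$ and introduces no further loss.
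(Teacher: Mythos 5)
There is a genuine gap, and it sits exactly where you place it: your ``first step'' is not a routine adaptation of Theorem~\ref{th:main} but is essentially the whole content of the paper's Theorem~\ref{th:matrix}. The proof of Theorem~\ref{th:main} lives on $X=\PP^N$ with $L=\mathcal{O}(1)$ and a morphism of algebraic degree $d$, and the reduction to that case (Lemma~\ref{lem:red}, via Fakhruddin) is unavailable here precisely because $f^*D$ is not a multiple of $D$; so to run any telescoping on the Wehler family you must first rebuild, on $X$ itself, the elimination estimate comparing $h_{D_t}(Q_t)$ with $\hgeom_D(Q)h(t)$ and the growth bounds for $\hgeom$ and $\harith$ along \emph{both} forward and backward orbits. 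Controlling $\harith_D(f^{\pm k}(P))$ forces you to track the whole module $M=\ZZ D_1\oplus\ZZ D_2$ and the spectral radius of $f^*$ on it --- which is exactly the machinery of Section~\ref{sec:k3} (Lemmas~\ref{lem:heightsmatrix}--\ref{lem:matex}); and once that is in place, Theorem~\ref{th:matrix} applied directly to $(f,E^+)$ and $(f^{-1},E^-)$, with $\alpha=\rho(f^*)=7+4\sqrt{3}$, gives Theorem~\ref{th:k3} immediately, with no symmetrized class and no $2\times 2$ system. Your remark that the telescoping ``needs only that iterated pullback expand the polarization by an integer eigenvalue'' misidentifies the relevant feature: integrality is irrelevant (Theorem~\ref{th:matrix} allows real $\alpha>1$); what governs the exponent is the ratio of the expansion factor to the spectral radius controlling the complexity of the iterates.

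Moreover, the specific mechanism you gesture at --- iterating $\hat{h}_D(f(P))+\hat{h}_D(f^{-1}(P))=14\hat{h}_D(P)$ --- gives a strictly worse exponent even if all the missing estimates are granted. Iterating $k$ times yields $\sum_{i=0}^{k}\binom{k}{i}\hat{h}_D(f^{k-2i}(P))=14^k\hat{h}_D(P)$: a tree of total weight $2^k$ normalized by only $14^k$. The per-node uniform errors (fibral canonical vs.\ Weil height, and generic degree vs.\ generic canonical height) must be applied node by node, contributing on the order of $(2/14)^k h(t)=7^{-k}h(t)$, while the elimination errors at the extreme nodes $f^{\pm k}(P)$ are of size roughly $\rho^{2k}$ with $\rho=7+4\sqrt{3}$, so the weighted elimination term is about $\bigl((\rho^2+\rho^{-2})/14\bigr)^k=(97/7)^k$. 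Balancing $7^{-k}h(t)$ against $(97/7)^k$ gives an error of order $h(t)^{1-\log 7/\log 97}\approx h(t)^{0.575}$; the loss is structural (weight $2^k$ against normalization $14^k$), not an asymmetry absorbed by $\epsilon$. The strategy can be repaired by using, for each $k$, the single relation $(f^k)^*D\otimes(f^{-k})^*D\cong D^{\otimes(\alpha^k+\alpha^{-k})}$, which involves only the two points $f^{\pm k}(P)$ and a normalization $\alpha^k+\alpha^{-k}\geq \alpha^k$; with that substitution, and with the Section~\ref{sec:k3} estimates in hand, your $2\times 2$ extraction (which is correct, since $\alpha-\alpha^{-1}=8\sqrt{3}\neq 0$) would indeed recover $\hat{h}^{\pm}_t(P_t)$ with error $O(h(t)^{1/2+\epsilon})$ --- but at that point one is reproducing the paper's argument in a roundabout way rather than shortcutting it.
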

Theorem~\ref{th:k3} is really a corollary to the more general (and more technical) Theorem~\ref{th:matrix} in Section~\ref{sec:k3}, on endomorphisms $f$ of projective   varieties and systems of line bundles $L_i$ satisfying $f^*L_i\cong \bigotimes_{j=1}^r L_j^{\otimes A_{ij}}$, for some integers $A_{ij}$.

Since our proof is a technical refinement of the argument of Call and Silverman~\cite{MR1255693} (tracing back to an earlier result of Silverman~\cite{MR703488}), we sketch that argument in the context of Theorem~\ref{th:main} before outlining our modification. The proof in \cite{MR1255693} proceeds as follows. First,
\begin{align}
	\left|\hat{h}_{f_t}(P_t)-\hat{h}_{f}(P)h_B(t)\right|&\leq \left|\hat{h}_{f_t}(P_t)-h_{X, L}(P_t)\right|\label{eq:cs1}\\
	&\quad + \left|h_{X, L}(P_t)-\deg(P^*L)h_B(t)\right|\label{eq:cs2}\\
	&\quad + \left|\deg(P^*L)-\hat{h}_{f}(P)\right|h_B(t)\label{eq:cs3}.
\end{align}
On the one hand, the term in~\eqref{eq:cs3} is bounded by estimating the difference between the naive height $\deg(P^*L)$ and the canonical height in the function field, which can be done with no dependence on $P$. On the other hand, it is not hard to show that the difference between the Weil height and canonical height in families is at most linear in the height of the parameter, whence the term on the right in~\eqref{eq:cs1} is at most $O(h_B(t))$, again with no dependence on $P$. Finally, since $h_{X, L}\circ P=h_{B, P^*L}$ and $\deg(P^*L)h_B$ are heights on $B$ relative to divisors of the same degree, the term in~\eqref{eq:cs2} is at most $O(h_B(t)^{1/2})$, with a constant depending on $B$. Consequently, we have
\begin{equation}\label{eq:firststab}\limsup_{h_B(t)\to\infty}\left|\frac{\hat{h}_{f_t}(P_t)}{h_B(t)}-\hat{h}_{f}(P)\right|\leq C,\end{equation}
for some constant $C$ not depending on $P$, since the term corresponding to~\eqref{eq:cs2} vanishes in the limit. But now, using the relation $\hat{h}_f\circ f = d\hat{h}_f$, we may apply this estimate to $f^k(P)$ to obtain
\[\limsup_{h_B(t)\to\infty}\left|\frac{\hat{h}_{f_t}(P_t)}{h_B(t)}-\hat{h}_{f}(P)\right|=\limsup_{h_B(t)\to\infty}\frac{1}{d^k}\left|\frac{\hat{h}_{f_t}(f^k(P)_t)}{h_B(t)}-\hat{h}_{f}(f^k(P))\right|\leq \frac{C}{d^k}.\]
Since $k$ is arbitrary, we may take $C=0$ in~\eqref{eq:firststab}.

Our strategy is to make the bound on the term~\eqref{eq:cs2} more explicit. Specifically,  we employ an explicit elimination of variables to obtain a bound of the form
\begin{equation}\label{eq:mainelim}\left|h_{X, L}(f^k(P)_t)-\deg(f^k(P)^*L)h_B(t)\right|\leq C'd^{3k}\end{equation} for a  specific height function on $B$, 
where $C'$ depends on $f$ and $P$, but not on $k$ or $t$.  Combined with the same estimates used by Call and Silverman for the other terms, we have
\begin{align*}
	\left|\hat{h}_{f_t}(P_t)-\hat{h}_{f}(P)h_B(t)\right|&=d^{-k}\left|\hat{h}_{f_t}(f^k(P)_t)-\hat{h}_{f}(f^k(P))h_B(t)\right|\\
	&\leq C'd^{2k}+C''d^{-k}h_B(t),
	\end{align*}
 with constants independent of both $k$ and $t$. For each $t\in B$ we choose $k$ with $d^{k}\approx h_B(t)^{1/3}$ to obtain an upper bound of scale $h_B(t)^{2/3}$. The further honing when $B=\PP^1$ derives from a sharper version of~\eqref{eq:mainelim} in that context.

\section{Explicit results over $\PP^1$ }\label{sec:rational}

In this section we present tighter results in the case that the base curve $B$ is rational, in which case we take $B=\PP^1_K$ and $h$ to be the standard Weil height on $B$; in other words, our dynamical systems are defined over the function field $K(t)$. We also restrict to the case in which $X=\PP^N_{K(t)}$, although we will see in Lemma~\ref{lem:red} below that this is no loss of generality.

To set notation, let $K$ be a number field, and let $M_K$ be the usual set of absolute values on $K$. We set $n_v=[K_v:\QQ_v]/[K:\QQ]$, and normalize our absolute values so that
\[\sum_{v\in M_K}n_v\log|x|_v=0\]
for $x\neq 0$. We will write
\[\|x_1, ..., x_m\|_v=\max|x_i|_v,\]
and $\log ^+x = \log\max\{1, x\}$, for $x\in \RR$. We also write, for real-valued functions $f$ and $g$, that $f(x)=O(g(x))$ as $x\to\infty$ as long as there exist constants $C$ and $C'$ such that $|f(x)|\leq Cg(x)$ whenever $x\geq C'$.

For $P\in \PP^N(K)$, we set
\[h(P)=\sum_{v\in M_K}n_v\log\|P\|_v\]
as usual.
 We also note that if $L/K$ is a finite extension and $P\in \PP^N(K)$, then $h(P)$ as computed in $\PP^N_L$ agrees with $h(P)$ as computed in $\PP^N_K$, and so the height is well defined over $\overline{K}$. Indeed, all of our estimates below are stable under field extension, and so our results apply at $t$ varies in $\PP^1_{\overline{K}}$.

We identify points $P\in \PP^N(K(t))$ with morphisms $P:\PP^1\to\PP^N$ defined over $K$. Every $P\in \PP^N(K(t))$ can be written as $P=[P_0:\cdots :P_n]$ with $P_i\in K[t]$  with no common factor, uniquely up to multiplication by a non-zero scalar. We write $\hgeom(P)=\deg(P)$ for the maximal degree of a coordinate function, noting that we will also view the coordinates of $P$ as binary homogeneous forms of degree $\hgeom(P)$.
And we set
\[\harith(P)=\sum_{v\in M_K}n_v\log\|P\|_v,\]
where $\|P\|_v=\max\|P_i\|_v$. In other words, $\harith$ is the usual Weil height on $\operatorname{Hom}_{\hgeom(P)}(\PP^1, \PP^N)\cong\PP^{(N+1)(\hgeom(P)+1)-1}$. Although we do not explicitly reference it here, our definition of arithmetic and geometric heights on $\PP^N_{K(t)}$ is motivated in part by work of Altman~\cite{MR292843}, foretelling more modern work on function fields heights (e.g., Moriwaki~\cite{MR1779799}; the language of presentations in Section~\ref{sec:B}, from Bombieri and Gubler~\cite{MR2216774}, is even closer to Altman).

It is convenient to note some basic facts on norms of polynomials (see, e.g., \cite[p.~22, p.~27]{MR2216774}).
\begin{lemma}[Gau\ss, Gelfond]\label{lem:gauss}
If $v$ is non-archimedean, then  $\|fg\|_v=\|f\|_v\|g\|_v$. In general,
\begin{equation}\label{eq:nonarchgauss}2^{-\deg(fg)}\|f\|_v\|g\|_v\leq \|fg\|_v\leq (\deg(g)+1)\|f\|_v\|g\|_v\leq 2^{\deg(fg)}\|f\|_v\|g\|_v. \end{equation}
\end{lemma}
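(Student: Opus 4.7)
The lemma separates into three essentially independent pieces: the non-archimedean Gauss's lemma, the elementary archimedean upper bound, and Gelfond's inequality for the archimedean lower bound. I plan to address each in turn, with the third being the main obstacle.

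For the non-archimedean case, I would write $f = \sum a_i t^i$ and $g = \sum b_j t^j$, so that each coefficient of $fg$ is a convolution $\sum_{i+j=k} a_i b_j$ and the ultrametric inequality immediately yields $\|fg\|_v \leq \|f\|_v \|g\|_v$. For the matching lower bound, I would select the least index $i_0$ with $|a_{i_0}|_v = \|f\|_v$ and the least $j_0$ with $|b_{j_0}|_v = \|g\|_v$. Every term of the coefficient of $t^{i_0 + j_0}$ in $fg$ other than $a_{i_0} b_{j_0}$ has either $i < i_0$ or $j < j_0$, hence $v$-absolute value strictly smaller than $\|f\|_v \|g\|_v$; strict ultrametricity then pins this coefficient at $|a_{i_0} b_{j_0}|_v = \|f\|_v \|g\|_v$.

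For the archimedean upper bound, assuming without loss of generality $\deg g \leq \deg f$, the $k$-th coefficient of $fg$ is a sum of at most $\deg g + 1$ products of coefficients of $f$ and $g$, each bounded in absolute value by $\|f\|_v \|g\|_v$. The triangle inequality gives $\|fg\|_v \leq (\deg g + 1) \|f\|_v \|g\|_v$, and the elementary estimate $\deg g + 1 \leq 2^{\deg g} \leq 2^{\deg(fg)}$ completes the outermost inequality in the stated chain.

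The main obstacle is the archimedean lower bound, Gelfond's inequality. My plan is to route this through the Mahler measure $M(h) = |a_{\deg h}|_v \prod_i \max(1, |\alpha_i|_v)$, where $h = a_{\deg h} \prod_i (t - \alpha_i)$ is a factorization over $\overline{K_v}$. By construction $M$ is multiplicative, $M(fg) = M(f) M(g)$. Expressing each coefficient of $h$ as a signed elementary symmetric function of the roots and using $|e_j(\alpha)| \leq \binom{\deg h}{j} \prod_i \max(1, |\alpha_i|_v)$ yields $|a_k|_v \leq \binom{\deg h}{k} M(h)$ and hence the one-sided comparison $\|h\|_v \leq 2^{\deg h} M(h)$. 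Applying this to $f$ and $g$ separately and multiplying gives
\[ \|f\|_v \|g\|_v \leq 2^{\deg(fg)} M(fg), \]
so it suffices to bound $M(fg) \leq \|fg\|_v$. This last step is the delicate one---$M(h) \leq \|h\|_v$ fails for arbitrary polynomials, since the ratio $M(h)/\|h\|_v$ can exceed $1$---and it is handled in \cite{MR2216774} via Jensen's formula together with Parseval-type passages between coefficient norms and the supremum on the unit circle, with constants tracked to yield exactly the stated $2^{-\deg(fg)}$.
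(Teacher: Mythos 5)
Your non-archimedean argument and the archimedean upper bound are fine and complete (the paper itself offers no proof of this lemma, simply citing Bombieri--Gubler pp.~22, 27, so there is no internal argument to match). The problem is the third part, which is the actual content of the lemma, and there your reduction is to a false statement.

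You reduce the lower bound to ``$M(fg)\leq \|fg\|_v$'' after spending the entire allowed constant on the two factor bounds $\|f\|_v\leq 2^{\deg f}M(f)$ and $\|g\|_v\leq 2^{\deg g}M(g)$. As you yourself note, $M(h)\leq\|h\|_v$ is simply false in general: for instance $h(t)=t^2-t-1$ has $\|h\|_\infty=1$ but $M(h)=(1+\sqrt5)/2>1$. Since $2^{\deg f}\cdot 2^{\deg g}=2^{\deg(fg)}$ already exhausts the target constant, there is no slack left to absorb any correct substitute (the best true statement in this direction, Landau's inequality $M(h)\leq\|h\|_2\leq\sqrt{\deg h+1}\,\|h\|_\infty$, costs an extra $\sqrt{\deg(fg)+1}$), so ``constants tracked in the reference'' cannot rescue the step as you have set it up; the deferral is not a proof but a restatement of the difficulty. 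The way the argument actually closes (and the way Bombieri--Gubler do it) is to sharpen the upstream coefficient bound from $2^{\deg h}$ to the central binomial coefficient, $\|h\|_\infty\leq\binom{\deg h}{\lfloor \deg h/2\rfloor}M(h)$, then use Landau/Jensen for $M(fg)\leq\sqrt{\deg(fg)+1}\,\|fg\|_\infty$, and finally check the purely numerical inequality
\[
\binom{m}{\lfloor m/2\rfloor}\binom{n}{\lfloor n/2\rfloor}\sqrt{m+n+1}\;\leq\;\frac{2^m}{\sqrt{m+1}}\cdot\frac{2^n}{\sqrt{n+1}}\cdot\sqrt{m+n+1}\;\leq\;2^{m+n},
\]
using $\binom{m}{\lfloor m/2\rfloor}\sqrt{m+1}\leq 2^m$ and $(m+1)(n+1)\geq m+n+1$. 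With that restructuring your Mahler-measure strategy goes through and yields exactly the stated constant $2^{-\deg(fg)}$; as written, the proposal has a genuine gap at precisely the inequality the lemma exists to provide.
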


Although $\harith(P)$ is defined in terms of coordinates in $K[t]$ chosen with no common factor, it is useful to note that some information can still be gleaned from coordinates with common factors.
\begin{lemma}\label{lem:arithgcd}
Let $P_0, ..., P_N\in K[t]$, not all zero, with greatest common factor $s$, and let $P=[P_0:\cdots :P_N]$. Then
\[\sum_{v\in M_K}n_v\log\|P\|_v=\harith(P)+h(s)+O(\hgeom(P)+\deg(s))\]
where $h(s)$ is the height of the projective tuple of coefficients of $s$.
\end{lemma}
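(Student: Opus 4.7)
The plan is to factor out the common divisor and apply Lemma~\ref{lem:gauss} place by place.  Write $P_i = sQ_i$, so that $Q_0,\ldots,Q_N\in K[t]$ are coprime and $P=[Q_0:\cdots:Q_N]$ is the reduced presentation.  By definition $\harith(P)=\sum_{v\in M_K} n_v\log\|Q\|_v$ with $\|Q\|_v=\max_i\|Q_i\|_v$, while $\hgeom(P)=\max_i\deg Q_i$; in particular $\max_i\deg P_i=\hgeom(P)+\deg(s)$, which will end up controlling the archimedean error.

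At a non-archimedean place $v$, the multiplicativity half of Lemma~\ref{lem:gauss} gives $\|sQ_i\|_v=\|s\|_v\|Q_i\|_v$ for each $i$; taking the maximum over $i$ yields $\|P\|_v=\|s\|_v\|Q\|_v$, with no error whatsoever.

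At an archimedean place, the Gelfond estimate in~\eqref{eq:nonarchgauss} supplies
\[ 2^{-\deg(sQ_i)}\|s\|_v\|Q_i\|_v \;\leq\; \|sQ_i\|_v \;\leq\; 2^{\deg(sQ_i)}\|s\|_v\|Q_i\|_v \]
for every $i$.  Applying the upper bound term-by-term to $\|P\|_v=\max_i\|sQ_i\|_v$, and the lower bound at an index $i^*$ achieving $\|Q_{i^*}\|_v=\|Q\|_v$, produces
\[ \bigl|\log\|P\|_v-\log\|s\|_v-\log\|Q\|_v\bigr|\;\leq\;(\hgeom(P)+\deg(s))\log 2.\]
Since there are only finitely many archimedean places and $\sum_{v\text{ arch}} n_v=1$, adding the place-by-place identity $\log\|P\|_v=\log\|s\|_v+\log\|Q\|_v$ (with the above error allowed at archimedean places) over $v\in M_K$ with weights $n_v$ yields
\[ \sum_{v\in M_K} n_v\log\|P\|_v\;=\;h(s)+\harith(P)+O(\hgeom(P)+\deg(s)),\]
with an absolute implied constant, which is the claimed estimate.

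There is no essential obstacle; the argument is pure bookkeeping on top of Gauss-Gelfond.  The one subtlety worth flagging is that $\hgeom(P)$ in the lemma statement refers to the \emph{reduced} coordinates $Q_i$, so the given $P_i$ may have degree as large as $\hgeom(P)+\deg(s)$, and it is this latter quantity---not $\hgeom(P)$ alone---that controls the exponent of $2$ in the archimedean Gelfond bound and therefore appears in the final error term.
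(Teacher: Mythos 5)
Your proof is correct and is essentially the paper's own argument: factor out the greatest common factor, apply the Gauss--Gelfond estimate of Lemma~\ref{lem:gauss} place by place (exact multiplicativity at non-archimedean places, an error of $(\hgeom(P)+\deg(s))\log 2$ at the archimedean ones), and sum over $v\in M_K$ with weights $n_v$, using $\max_i\deg(P_i)=\hgeom(P)+\deg(s)$. The only difference is that you spell out the maximizing-index bookkeeping for $\|P\|_v=\max_i\|sQ_i\|_v$, which the paper leaves implicit.
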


\begin{proof}
Let $s$ be the greatest common factor of $P_{0}, ..., P_{N}$, with $sQ_{i}=P_{i}$. Then
 by Lemma~\ref{lem:gauss},
\[\left|\log\|Q\|+\log\|s\|_v- \log\|P\|_v\right|\leq\max\deg(P_{i})\log^+|2|_v.\]
Summing over all places, and nothing that $\hgeom(P)=\max\deg(P_i)-\deg(s)$, proves the lemma.
\end{proof}

Write write $P_t$ for $P$ evaluated at $t\in\PP^1_K$. It follows from the usual facts about heights that \[h(P_t)=\hgeom(P)h(t)+O_P(1),\] and the main lemma of this section is an explicit estimate on the error term. Its proof hinges  on effective elimination of variables, in the vein of the effective Nullstellensatz of Masser and W\"{u}stholz~\cite{MR704399} (see also more recents results of Krick, Pardo, and Sombra~\cite{MR1853355}).
 These results turn out to be more convenient to apply in spirit than in letter, however, given our conventions and normalizations, and so we work mostly with the constituent parts.

We will need the following fact from linear algebra, which is essentially~\cite[Lemma~4]{MR704399} stated slightly more generally (it is also just Cramer's Rule, and a proof can be found in the arXiv version of this paper). By an \emph{$r\times r$ signed minor} of a matrix $A$ with entries from some commutative ring with identity, we mean $\pm 1$ times the determinant of some $r\times r$ submatrix of $A$.
\begin{lemma}[Cramer's Rule]\label{lem:cramer}
	Let $R$ be an integral domain, and suppose that we have a homogeneous system of linear equations over $R$ in $x_1, ..., x_p$ of rank $r$, which admits a solution with $x_s\neq 0$. Then there is a solution with $x_s\neq 0$ in which each $x_j$ is an $r\times r$ signed minor of the  coefficient matrix.
\end{lemma}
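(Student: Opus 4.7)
The plan is to reduce the problem to the standard Cramer's rule on an invertible square submatrix, and then extend the resulting kernel vector by zeros to the full index set. Since $R$ is an integral domain, I would pass to its fraction field $K$ throughout: rank, kernel, and $r\times r$ minors are all stable under this extension, and because each signed minor is a polynomial expression in the entries of the coefficient matrix $A$, any solution whose coordinates are signed minors automatically lies in $R^{p}$. So we may as well work over a field.

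The first step is to locate a good invertible $r\times r$ submatrix of $A$ whose columns avoid index $s$. The hypothesis that $Ax=0$ has a solution with $x_{s}\neq 0$ means exactly that the $s$-th column $A_{s}$ lies in the $K$-linear span of the remaining columns, so the $(p-1)$-column submatrix consisting of columns with index $\neq s$ still has column rank $r$. I would then pick $r$ such linearly independent columns, say with indices $j_{1},\ldots,j_{r}$, and also pick $r$ rows $i_{1},\ldots,i_{r}$ so that the $r\times r$ submatrix $B=A[i_{1},\ldots,i_{r};\,j_{1},\ldots,j_{r}]$ is invertible. Since the row space of $A$ has dimension $r$, every row of $A$ is a linear combination of rows $i_{1},\ldots,i_{r}$, so any vector killed by the subsystem of rows $i_{1},\ldots,i_{r}$ automatically solves the full system.

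The second step is standard Cramer's rule applied to the $r\times(r+1)$ homogeneous subsystem obtained by restricting further to the columns indexed by $\{s,j_{1},\ldots,j_{r}\}$. Its kernel is one-dimensional, and rewriting the $r\times(r+1)$ equation as $B\cdot(x_{j_{1}},\ldots,x_{j_{r}})^{T}=-x_{s}\,A_{s}'$, where $A_{s}'$ is column $s$ restricted to rows $i_{1},\ldots,i_{r}$, I would set $x_{s}=\det(B)$ and apply classical Cramer's rule to obtain $x_{j_{\ell}}=-\det(B_{\ell})$, where $B_{\ell}$ is the matrix formed from $B$ by replacing its $\ell$-th column with $A_{s}'$. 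Each of $\det(B)$ and $\det(B_{\ell})$ is, by construction, the determinant of an $r\times r$ submatrix of $A$, hence an $r\times r$ signed minor; and $x_{s}\neq 0$ because $B$ is invertible. Extending by $x_{j}=0$ for all remaining indices $j$ then gives a vector in $\ker(A)$.

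The only real obstacle is interpretive rather than substantive: the conclusion ``each $x_{j}$ is an $r\times r$ signed minor'' must accommodate the coordinates I have set equal to zero. When $p=r+1$ there are no such extra coordinates, and in general one adopts the natural convention (matching the Masser--W\"ustholz-style use of this lemma) that zero is permitted as a degenerate value of a signed minor; indeed, the only way the lemma is actually used in this paper is to bound $|x_{j}|$ by the maximum absolute value of an $r\times r$ minor, in which context zero contributes nothing. Under this reading the construction above completes the proof, and the integrality remark from the first paragraph places the resulting solution back in $R^{p}$.
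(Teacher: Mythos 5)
Your proof is correct, and it is a genuine (if modest) reorganization of the argument rather than a transcription of it. The paper fixes an arbitrary nonvanishing $r\times r$ minor (which may well involve column $s$), eliminates the pivot variables by multiplying each equation by a cofactor and summing — producing an equivalent reduced system whose coefficients are signed minors — and then splits into cases according to whether $s$ is a pivot index, invoking the hypothesis only in the case $s\le r$ to find a free column $j$ with $\delta_{s,j}\neq 0$. You instead spend the hypothesis at the outset: a solution with $x_s\neq 0$ shows column $s$ lies in the span of the others, so you may choose the $r$ independent columns $j_1,\dots,j_r$ to avoid $s$, which removes the case split entirely; the price is the extra (easy) observation that the $r$ rows making $B$ invertible span the row space, so the zero-extended vector annihilated by that square subsystem solves the whole system, after which classical Cramer's rule on $B\,(x_{j_1},\dots,x_{j_r})^T=-x_s A_s'$ with $x_s=\det B$ hands you coordinates that are signed minors (column reordering only contributing signs). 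Your interpretive worry about the coordinates set to zero is not a defect of your argument relative to the paper: the paper's own solution vectors also pad with zeros outside an $(r+1)$-element support, and the lemma is only ever used to bound $\log\|A_{i,j}\|_v$ and $|a|_v$ from above, so zero entries are harmless under exactly the convention you state. The only cosmetic omission is the degenerate case $r=0$, which the paper dispatches in a sentence and which your construction also covers with the empty-determinant convention.
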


\begin{lemma}\label{lem:explicitspec}
	Let $P\in \PP^N(K(t))$. Then
	\begin{multline*}
-\harith(P)-\log(\hgeom(P)+1) \\ \leq \hgeom(P)h(t)-h(P_t)\\ \leq  4\hgeom(P)\harith(P)+4\hgeom(P)\log \hgeom(P)\\+8\hgeom(P)\log 2+\log\hgeom(P)+\log (N+1)
\end{multline*}
if $\hgeom(P)\neq 0$. (If $\hgeom(P)=0$, then $h(P_t)=\harith(P)$.)
\end{lemma}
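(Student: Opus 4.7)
The plan is to prove the two inequalities in turn. The lower bound on $\hgeom(P)h(t) - h(P_t)$ is the elementary direction: writing $\tilde P_i(t_0,t_1)$ for the degree-$d$ homogenization of $P_i \in K[t]$ (with $d := \hgeom(P)$), the triangle inequality gives $|\tilde P_i(t_0,t_1)|_v \leq (d+1)^{\varepsilon_v}\|P_i\|_v\|t\|_v^d$, where $\varepsilon_v = 1$ at archimedean $v$ and $0$ otherwise. Taking the maximum over $i$, logging, and summing against $n_v$ with $\sum_{v\mid\infty}n_v = 1$ yields the lower bound $-\harith(P) - \log(d+1)$ directly.

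The upper bound is the substantive direction, for which the plan is to use effective elimination via Cramer's Rule. Because the $P_i$ have no common polynomial factor, the homogenized forms $\tilde P_0,\dots,\tilde P_N$ have no common zero in $\PP^1_{\overline K}$. An explicit Hilbert-series or Koszul-complex calculation in $K[t_0,t_1]$ then shows that the ideal $(\tilde P_0,\dots,\tilde P_N)$ contains $t_0^{m}$ and $t_1^{m}$ for some explicit $m$ of size $O(d)$ independent of $N$. For each $k \in \{0,1\}$, the identity
\[\sum_{i=0}^{N} A_i^{(k)}(t_0,t_1)\,\tilde P_i(t_0,t_1) = t_k^{m},\]
with $A_i^{(k)}$ a binary form of degree $m - d$, amounts to a linear system in the coefficients of the $A_i^{(k)}$ whose coefficient matrix (the Macaulay matrix) is assembled from the coefficients of the $\tilde P_i$. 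After homogenizing the system in the standard way, Lemma~\ref{lem:cramer} supplies a solution in which every coefficient of every $A_i^{(k)}$ is a signed minor of size $r \leq m + 1 = O(d)$; choosing an integral coprime representative for $P$ then bounds any such minor by $r!\,\|P\|_v^r$ at archimedean $v$ and by $\|P\|_v^r$ at non-archimedean $v$.

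Substituting back into the identities and using the triangle inequality to expand each $|A_i^{(k)}(t)\tilde P_i(t)|_v$, the combination over $k$ yields the place-by-place estimate
\[\|t\|_v^{d} \leq C_v\max_{k}\|A^{(k)}\|_v\cdot\|P_t\|_v,\]
with $C_v = (N+1)(m-d+1)$ at archimedean $v$ and $C_v = 1$ otherwise. Taking logs, summing over $v \in M_K$, and invoking the minor bound to control $\sum_v n_v\log\|A^{(k)}\|_v$ by $r\,\harith(P) + \log(r!)$ gives an upper estimate of the stated shape, with the dominant term $4\hgeom(P)\harith(P)$ arising from $r = O(d)$.

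The main obstacle is sharp bookkeeping of the constants, not the structure of the argument. The structural steps --- existence of the Nullstellensatz-type identities, application of Cramer's Rule, and a regularity bound $m = O(d)$ in two variables --- are routine. What requires care is choosing $m$ precisely and tracking the factorials and counting factors so that the coefficient of $\harith(P)$ in the final bound lands at exactly $4\hgeom(P)$ and the secondary terms match $4\hgeom(P)\log\hgeom(P) + 8\hgeom(P)\log 2 + \log\hgeom(P) + \log(N+1)$. The degenerate case $\hgeom(P) = 0$ is trivial: all $P_i$ are then constants, $P_t$ is independent of $t$, and $h(P_t) = \harith(P)$ directly.
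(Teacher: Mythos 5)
Your overall strategy coincides with the paper's: the lower bound by the triangle inequality, and the upper bound by expressing powers of $t_0,t_1$ in the ideal of the homogenized coordinates, solving the resulting linear system via Lemma~\ref{lem:cramer}, and summing a place-by-place estimate. However, as written there is a genuine gap at the normalization of the identities. Lemma~\ref{lem:cramer} applies to a \emph{homogeneous} system, so what it produces is a solution of $a^{(k)}t_k^{m}=\sum_i A_i^{(k)}\tilde P_i$ in which $a^{(k)}$ and the coefficients of the $A_i^{(k)}$ are all $r\times r$ signed minors, with $a^{(k)}\neq 0$ but in general $a^{(k)}\neq 1$. Your displayed estimate $\|t\|_v^{d}\leq C_v\max_k\|A^{(k)}\|_v\cdot\|P_t\|_v$ silently sets $a^{(k)}=1$: if you rescale to force the right-hand side to be exactly $t_k^m$, the coefficients of $A_i^{(k)}$ become ratios of minors and the bound $\|A^{(k)}\|_v\leq r!\,\|P\|_v^r$ fails place by place (choosing an ``integral coprime representative'' of $P$ does not help, since the problem is the division by $a^{(k)}$, not the normalization of $\|P\|_v$). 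Worse, when you sum over places you must use, at each $v$, the equation for the index $k(v)$ with $|t_{k(v)}|_v=\|t\|_v$, and since $k(v)$ varies with $v$ the terms $\log|a^{(k(v))}|_v$ are not removed by the product formula. The paper sidesteps both issues by imposing a \emph{single} scalar $a$ common to the two identities \eqref{eq:elim1}--\eqref{eq:elim2}, treated as one homogeneous system: then $\sum_v n_v\log|a|_v=0$ eliminates the scalar exactly, at the cost of the rank bound $r\leq 4\hgeom(P)$ (both equations contribute rows), which is precisely where the coefficient $4$ in $4\hgeom(P)\harith(P)$ comes from.

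The gap is repairable: either adopt the common-$a$ device, or keep separate scalars and control the mismatch by $\sum_v n_v\log|a^{(k(v))}|_v\geq-\sum_v n_v\max_k\log|a^{(k)}|_v\geq-(r\harith(P)+r\log r)$ (the height of $[a^{(0)}:a^{(1)}]$, bounded by the same minor estimate), which roughly doubles your per-equation contribution $r\leq m+1=2\hgeom(P)$ and lands back near the stated $4\hgeom(P)\harith(P)$. But in the proposal this accounting is absent, the displayed place-by-place inequality is unjustified as stated, and the claim that the secondary terms come out to exactly $4\hgeom(P)\log\hgeom(P)+8\hgeom(P)\log 2+\log\hgeom(P)+\log(N+1)$ is explicitly deferred rather than proved; so the argument needs this repair before it establishes the lemma.
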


\begin{proof}
Write $P$ as a tuple of homogeneous forms of degree $\hgeom(P)$ in $t=[t_0: t_1]$. In one direction we have from the triangle inequality that
\begin{align*}
	\log\|P_t\|&\leq \hgeom(P)\log\|t\|_v+\log\|P\|_v+\log^+|\hgeom(P)+1|_v,
\end{align*}
and summing over all places gives
\[h(P_t)\leq \hgeom(P)h(t)+\harith(P)+\log(\hgeom(P)+1).\]

On the other hand, consider the system of equations
\begin{align}
a t_0^{2\hgeom(P)-1}&=P_0A_{0, 0}+\cdots +P_NA_{0, N}\label{eq:elim1}\\	
 a t_1^{2\hgeom(P)-1}&=P_0A_{1, 0}+\cdots +P_NA_{1, N}\label{eq:elim2}
\end{align}
to be solved with $a\in K$ and $A_{i, j}\in K[t_0, t_1]$. By the Nullstellensatz over $K$, there is a solution with $a=1$ if we replace the exponent on the left-hand-side by something sufficiently large, and the fact that we might take the given exponent follows from, e.g., \cite[Theorem~1.1]{MR2198324} (although in this case one can also prove the existence of a solution just by linear algebra; see the appendix to the arXiv version). Identifying coefficients of monomials in $t_0, t_1$ on both sides of each equation, we then have a system of linear equations in $a$ and the coefficients of the various $A_{i, j}$, which has a solution in $K$ with $a\neq 0$. By Lemma~\ref{lem:cramer}, there is a solution with $a\neq 0$, and in which\ $a$ and the coefficients of the $A_{i, j}$ are all $r\times r$ minors of a matrix whose entries are coefficients of the $P_i$, where $r$ is the rank of the system. From this,
\[\log\|A_{i, j}\|_v\leq r\log\|P\|_v+\log^+|r!|\]
and
\begin{align*}
\log|a|_v+(2\hgeom(P)-1)\log\|t\|_v&\leq \log\|P_t\|_v+\log\|A_{i, j}(t)\|_v+\log^+|N+1|_v\\
&\leq \log\|P_t\|_v+(\hgeom(P)-1)\log\|t\|_v\\& \quad +\log^+|\hgeom(P)|_v+r\log\|P\|_v +\log^+|r!|_v\\&\quad+\log^+|N+1|_v\\
\log|a|_v+\hgeom(P)\log\|t\|_v&\leq \log\|P_t\|_v+r\log\|P\|_v+r\log^+|r|_v\\&\quad+\log^+|N+1|_v+\log^+|\hgeom(P)|_v,
\end{align*}
since $r!\leq r^r$ for all $r\geq 1$.
Summing over all places,
\[\hgeom(P)h(t)\leq h(P_t)+r\harith(P)+r\log r+\log\hgeom(P)+\log (N+1).\]

It remains to bound $r$, which is the rank of the system of linear equations satisfied by $a$ and the coefficients of the $A_{i, j}$.
Equations~\eqref{eq:elim1} and~\eqref{eq:elim2} each involve $2\hgeom(P)$ monomials in $t_0, t_1$, and hence the resulting linear system contains at most $4\hgeom(P)$ equations; we thus have $r\leq 4\hgeom(P)$, and so
\begin{align*}
\hgeom(P)h(t)&\leq h(P_t)+4\hgeom(P)\harith(P)+4\hgeom(P)\log (4\hgeom(P))\\&\quad+\log\hgeom(P)+\log (N+1).
\end{align*}
\end{proof}

Now fix a morphism $f:\PP^N\to \PP^N$ of degree $d$ over $K(t)$. That is, $f$ is given by $N+1$ homogeneous forms of degree $d$, whose coefficients are polynomials in $t$. We set $\hgeom(f)$ to be the maximum of these degrees, and $\harith(f)$ to be the height of the grand tuple of coefficients of the $f_i$. The following lemma estimates the difference between the canonical height and the usual Weil height in our families, and is an explicit version of a result already appearing in~\cite{MR1255693}.

\begin{lemma}\label{lem:famheights}
We have
\begin{equation}\label{eq:genfib}|\hgeom(P)-\hat{h}_f(P)|\leq C_{1}\end{equation}
and (for all but finitely many $t$)
\begin{equation}\label{eq:fibreheight}|h(Q)-\hat{h}_{f_t}(Q)|\leq C_{1}h(t)+C_2\end{equation}
with
\[C_3=(N+1)^2(N(d-1)+1)^N\]
\[C_{1}=\frac{C_3\hgeom(f)}{d-1},\]
and
\begin{multline*}
C_2=\frac{C_3(\harith(f)+\log(\hgeom(f)+1)+\log C_3)}{d-1}\\+\frac{\log(N+1)+N\log (N(d-1)+1)}{d-1}.
\end{multline*}
\end{lemma}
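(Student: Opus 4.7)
The plan is to establish both estimates via the standard telescoping construction of the canonical height: if $|h(f(Q))-d\,h(Q)|\leq C$ for all $Q$, then the series $\hat{h}_f(Q)=\lim_{n\to\infty}h(f^n(Q))/d^n$ telescopes to give $|\hat{h}_f(Q)-h(Q)|\leq C/(d-1)$. Inequality~\eqref{eq:genfib} then follows by applying this over the function field $K(t)$, where the natural Weil height on $\PP^N(K(t))$ is exactly $\hgeom$, and the appropriate notion of ``height of $f$'' is $\hgeom(f)$. Inequality~\eqref{eq:fibreheight} follows by applying it over $\overline{K}$ fibrewise, where the coefficients of $f_t$ have heights at most $\hgeom(f)h(t)+\harith(f)+O(1)$, producing the linear-in-$h(t)$ term with coefficient $C_1$.

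The core estimate $|h(f(Q))-d\,h(Q)|\leq C$ splits into two inequalities. The upper bound $h(f(Q))\leq d\,h(Q)+h(f)+O(1)$ is immediate from applying the triangle inequality to each form $f_i(Q)$. For the lower bound, since $f:\PP^N\to\PP^N$ is a morphism, the projective Nullstellensatz (with Macaulay's sharp exponent) provides identities
\[X_i^{N(d-1)+1}=\sum_{j=0}^{N}g_{i,j}(X)f_j(X),\qquad i=0,\dots,N,\]
with $g_{i,j}$ homogeneous of degree $N(d-1)+1-d$. Equating coefficients of monomials on both sides gives a linear system over $K(t)$ (respectively $\overline{K}$ after specializing) for the unknown coefficients of the $g_{i,j}$, with known coefficients drawn from those of the $f_j$. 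Lemma~\ref{lem:cramer} yields a solution in which each coefficient of $g_{i,j}$ is an $r\times r$ signed minor of a submatrix of the coefficient matrix, where $r$ is the rank of the system. Evaluating the Nullstellensatz identity at $Q$, applying the triangle inequality to bound $\|g_{i,j}(Q)\|_v\leq(\text{monomial count})\,\|g_{i,j}\|_v\,\|Q\|_v^{N(d-1)+1-d}$, and dividing through produces
\[\|Q\|_v^d\leq C_v\cdot\max_{i,j}\|g_{i,j}\|_v\cdot\|f(Q)\|_v,\]
which after summing $\log$ over all places gives $d\,h(Q)-h(f(Q))\leq r\,h(f)+O(1)$, where the $O(1)$ collects combinatorial contributions from $C_v$ (trivial at non-archimedean places).

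It remains to package the bound $r\cdot h(f)+O(1)$ into the form $C_3 h(f)+\text{const}$, so that telescoping yields $C_1=C_3\hgeom(f)/(d-1)$ and $C_2$ as stated. Crudely, $r$ is at most the number of equations in the system: the identity has $(N+1)$ components, each producing as many equations as there are monomials of degree $N(d-1)+1$ in $N+1$ variables, which one can bound by $(N+1)(N(d-1)+1)^N$, together accounting for the $(N+1)^2(N(d-1)+1)^N$ in $C_3$. The constant $C_2$ absorbs the additive monomial-count and $\log r!$ terms from Cramer's rule, together with the $\harith(f)$ and $\log(\hgeom(f)+1)$ contributions coming from bounding the height of a polynomial-in-$t$ specialized at $t$ (via Lemma~\ref{lem:explicitspec}). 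The principal obstacle is precisely this combinatorial bookkeeping: isolating which factors in the final constants come from the rank bound, which from archimedean monomial counts, and which from the specialization $f\mapsto f_t$, while keeping the leading coefficient of $h(t)$ exactly $\hgeom(f)$ rather than something polluted by combinatorial factors independent of~$f$.
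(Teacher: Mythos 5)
Your overall strategy is the same as the paper's: upper bound by the triangle inequality, lower bound by a homogeneous Nullstellensatz identity whose cofactors are controlled via Cramer's rule (Lemma~\ref{lem:cramer}) in terms of $r\times r$ minors of the coefficients of $f$, a rank bound producing $C_3$, and the telescoping argument dividing everything by $d-1$. However, the key identity as you state it is false. You claim
\[X_i^{N(d-1)+1}=\sum_{j=0}^N g_{i,j}(\mathbf{X})f_j(\mathbf{X}),\qquad \deg g_{i,j}=(N-1)(d-1),\]
whenever $f$ is a morphism. Already for $N=1$ this would say that $X_0^d$ is a \emph{constant}-coefficient combination of two coprime binary forms of degree $d$, which fails, e.g., for $f_0=X_0^d+X_1^d$, $f_1=X_0X_1^{d-1}$ (comparing coefficients of $X_0^d$ and $X_1^d$ forces $c_0=1$ and $c_0=0$). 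Macaulay's theorem, which is what the paper invokes, gives the exponent $e=(N+1)(d-1)+1$ (more precisely $\Res(f)\cdot X_i^{e}$ lies in the ideal, with cofactors of degree $e-d=N(d-1)$); degrees below $(N+1)(d-1)+1$ are not attainable in general because $(N+1)(d-1)$ is the socle degree of a generic complete intersection. This is not merely cosmetic: the cofactor degree $N(d-1)$ is exactly what makes the monomial count $(N(d-1)+1)^N$, and hence the stated $C_3$, appear.

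Two further points where your sketch would not deliver the lemma as stated. First, you bound the rank $r$ by the number of \emph{equations}, and your arithmetic ("$(N+1)(N(d-1)+1)^N$ equations ... together accounting for $(N+1)^2(N(d-1)+1)^N$") does not actually produce $C_3$; the clean route (and the paper's) is to bound $r$ by the number of \emph{unknowns}, namely $a$ together with the $(N+1)^2$ cofactors each having at most $(N(d-1)+1)^N$ coefficients, using that the system has a nontrivial solution so $r$ is strictly less than the number of unknowns. Second, for~\eqref{eq:genfib} one must handle the fact that the forms $F_j(P_0,\dots,P_N)$ may acquire a common factor when one passes to $f(P)$ in lowest terms; the identity shows this factor divides $a$, which is what keeps the degree bookkeeping intact. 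Since this lemma is precisely an explicit-constants statement, the "combinatorial bookkeeping" you defer --- separating the $r\hgeom(f)h(t)$ term from the additive contributions $r\harith(f)+r\log(\hgeom(f)+1)+r\log r+\log(N+1)+N\log(N(d-1)+1)$ coming from Cramer minors and archimedean monomial counts --- is the substance of the proof, and with your exponent it would in any case come out with the wrong degrees and counts.
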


\begin{proof}
Again, one direction is straightforward by the triangle inequality. Let $f$ be given by homogeneous forms $F_i$, whose coefficients are homogeneous forms of degree  $\hgeom(f)$ in $t=[t_0:t_1]$.
Then
\begin{equation}\label{eq:genhupper}\hgeom(f(P))\leq \max\deg(F_i(P_0, ..., P_N))\leq d\hgeom(P)+\hgeom(f).\end{equation}

Similarly, we have
for $Q\in \PP^N(K)$,
\begin{align*}
\log\|F_i(Q)\|_v& \leq d\log\|Q\|_v+\log\|F_{i, t}\|_v+N\log^+|d+1|_v\\
&\leq 	d\log\|Q\|_v+\hgeom(f)\log\|t\|_v+\log\|F\|_v+\log^+|\hgeom(f)+1|_v\\ &\quad+N\log^+|d+1|_v,
\end{align*}
so
\begin{equation}
h(f_t(Q))\leq dh(Q)+\hgeom(f)h(t)+\harith(f)+\log(\hgeom(f)+1)+N\log(d+1).\label{eq:fibhupper}	
\end{equation}

On the other hand, consider the system of equations 
\begin{align}
a X_0^{e}&=F_{0}(\mathbf{X})A_{0, 0}(\mathbf{X})+\cdots +F_N(\mathbf{X})A_{N, 0}(\mathbf{X})\nonumber\\
&\vdots\label{eq:nullres} \\
a X_N^{e}&=F_{0}(\mathbf{X})A_{N, i}(\mathbf{X})+\cdots +F_N(\mathbf{X})A_{N, N}(\mathbf{X})\nonumber
\end{align}
with $e=(N+1)(d-1)+1$,
which we hope to solve with $a\in K(t)$ and $A_{i, j}(\mathbf{X})\in K(t)[\mathbf{X}]$. By Macaulay's work on resultants~\cite{MR1577000} (see Lang~\cite[Lemma~3.7, p.~394]{MR1878556} for a more recent treatment), there is a solution with $a$ the resultant of the homogeneous forms $F _i(\mathbf{X})$, which will be nonzero as $f$ is a morphism. Then, viewing the system as a system of linear equations in the coefficients of the monomials in $\mathbf{X}$, there is a solution with $a\neq 0$ and such that $a$ and the coefficients of the $A_{i, j}$ are all $r\times r$ minors of some matrix whose entries are among the coefficients of the $F_{i}$ (where $r$ is the rank of the resulting system). So
\begin{align*}
\deg(a)+e\deg(P_i)&\leq \deg(F_i(P_0, .., P_N))+\max\deg(A_{i, j}(P_0, ..., P_N))\\
&\leq \deg(F_i(P_0, ..., P_N))+r\hgeom(f)+(e-d)\deg(P),
\end{align*}
since the coefficients of the $A_{i, j}$ are $r\times r$ minors of a matrix whose entries are coefficients of $f$.
From this,
\[\deg(a)+d\hgeom(P)\leq \max\deg(F_i(P_0, ..., P_N))+r\hgeom(f).\]
Of course, the homogeneous forms $F_i(P_0, ..., P_N)$ might have a common factor, but it is a divisor of $a$, and hence even after eliminating this we have
\begin{equation}\label{eq:genhlower}d\hgeom(P)\leq \hgeom(f(P))+r\hgeom(f).\end{equation}
It then suffices to bound $r$. Since the number of  monomials of degree $D$ in $N+1$ homogeneous variables is no greater than $(D+1)^N$, the system of linear equations implied by~\eqref{eq:nullres} consists of at most $(N+1)((N+1)(d-1)+2)^N$ equations in at most $1+(N+1)^2(N(d-1)+1)^N$ unknowns, so we may take $r\leq C_3$ (the system has a nontrivial solution, and so the rank is strictly less than the number of variables). Combining~\eqref{eq:genhupper} with~\eqref{eq:genhlower}, we obtain
\[\left|d\hgeom(P)-\hgeom(f(P))\right|\leq C_3\hgeom(f).\]

On the other hand, plugging in $t$ with $a(t)\neq 0$ gives
\begin{equation}\label{eq:thishere}\log|a(t)|_v+e\log\|Q\|_v\leq \max|F_{i, t}(Q)|_v+\max|A_{i,j}(Q)|_v+\log^+|N+1|_v\end{equation}
Now, each coefficient $F_{i, m}$ of $F_i$ satisfies
\[\log|F_{i, m, t}|_v\leq \hgeom(f)\log\|t\|+\log\|F\|_v+\log^+|\hgeom(f)+1|_v,\]
and each coefficient $A_{i, j, m}$ of $A_{i, j}$ is an $r\times r$ determinant of such values, so these coefficients satisfy
\[\log|A_{i, j, m}(t)|_v\leq r\log\max|F_{i, m, t}|_v+r\log^+|r|_v,\]
whence
\begin{multline}\label{eq:thatthere}
\log|A_{i, j}(Q)|_v\leq (e-d)\log\|Q\|_v +N\log^+|e-d+1|_v+r\hgeom(f)\log\|t\|+r\log\|F\|_v\\+r\log|\hgeom(f)+1|_v+r\log^+|r|_v,	
\end{multline}
for each $i, j$. Combining~\eqref{eq:thishere} and~\eqref{eq:thatthere}, and summing over all places of $K$, we obtain 
\begin{multline*}
dh(Q)\leq h(f_t(Q))+\log(N+1)+N\log (N(d-1)+1)+r\hgeom(f)h(t)+r\harith(f)\\+r\log(\hgeom(f)+1)+r\log r,
\end{multline*}
as long as $a(t)\neq 0$. We bound $r$ above by $C_3$, and then note that this error term is larger than that in~\eqref{eq:fibhupper}, and so even in absolute value $dh(Q)-h(f_t(Q))$ is bounded by this error term.

The final claims follow from a standard telescoping sum argument: If $S$ is any set, $\phi:S\to S$ is any function, and $\psi:S\to\RR$ is a non-negative function satsifying
\[|d\psi(x)-\psi\circ \phi(x)|\leq C,\]
then for $\hat{\psi}(x)=\lim_{k\to\infty} d^{-k}\psi\circ \phi^k(x)$, we have
\[|\hat{\psi}(x)-\psi(x)|\leq \frac{C}{d-1}.\]
\end{proof}

The previous lemma gives a good indication of how the geometric height grows in an orbit. The next lemma recapitulates this, and gives us some estimate on the more subtle arithmetic height.

\begin{lemma}\label{lem:growth}
Let $P\in \PP^N(K(t))$ and let $f:\PP^N\to \PP^N$ over $K(t)$. Then
\begin{equation}\label{eq:trivboundgeom}\hgeom(f^k(P))\leq d^kC_4.\end{equation}
and 
\begin{equation}\label{eq:trivboundarith}\harith(f^k(P))\leq d^kC_5.\end{equation}
for
\[C_4=\hgeom(P)+\frac{1}{d-1}\hgeom(f)	,\]
\begin{multline*}
C_5=\harith(P)+\hgeom(P)\log 2+\frac{1}{d-1}\harith(f)+\frac{\log 2}{d-1}\hgeom(f)\\+\frac{N}{d-1}\log(d+1)
+\frac{d\log d}{(d-1)^2}+\frac{d\log 2 +d\log^+(\hgeom(P)+\frac{1}{d-1}\hgeom(f))}{(d-1)}.
\end{multline*}
\end{lemma}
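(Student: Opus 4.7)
The geometric bound~\eqref{eq:trivboundgeom} is immediate. Writing $f = [F_0 : \cdots : F_N]$ with $F_i$ of $X$-degree $d$ having coefficients in $K[t]$ of $t$-degree at most $\hgeom(f)$, we have $\hgeom(f(P)) \leq d\hgeom(P) + \hgeom(f)$; iterating $k$ times and summing the resulting geometric series gives $\hgeom(f^k(P)) \leq d^k\hgeom(P) + \hgeom(f)(d^k - 1)/(d-1) \leq d^k C_4$.

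For the arithmetic bound~\eqref{eq:trivboundarith} the plan is to iterate the tuple substitution without clearing common factors at each step, and to apply Lemma~\ref{lem:arithgcd} only at the end. Let $F^{(k)}(P)$ be the tuple obtained by $k$-fold substitution of the defining polynomials into $P$, set $\tilde{a}_k = \sum_v n_v \log\|F^{(k)}(P)\|_v$, and let $\tilde{g}_k = \max_i \deg F^{(k)}_i(P)$. The same iteration as in the geometric case gives $\tilde{g}_k \leq d^k C_4$. To set up a one-step recursion for $\tilde{a}_k$, expand
\[
F_i(F^{(k)}(P)) = \sum_{|\alpha|=d} c_{i,\alpha}\prod_j F^{(k)}_j(P)^{\alpha_j}
\]
as a sum of at most $(d+1)^N$ monomials, and apply the multiplicative form $\|fg\|_v \leq (\min\{\deg f,\deg g\}+1)\|f\|_v\|g\|_v$ of Lemma~\ref{lem:gauss} iteratively. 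The $d-1$ internal multiplications among polynomials of degree $\leq \tilde{g}_k$ contribute an archimedean inflation of at most $(\tilde{g}_k + 1)^{d-1}$; the final multiplication by $c_{i,\alpha}$ contributes at most $\hgeom(f) + 1$; at non-archimedean places there is no inflation. Logging, taking maxima over $i$, and summing over all places gives
\[
\tilde{a}_{k+1} \leq d\tilde{a}_k + A + (d-1)\log(\tilde{g}_k + 1),\qquad A = \harith(f) + \log(\hgeom(f)+1) + N\log(d+1).
\]

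Divide the recursion by $d^{k+1}$ and telescope. The constant part $A$ sums to $A/(d-1)$ via $\sum_{j\geq 0}d^{-j-1} = 1/(d-1)$. For the inhomogeneous part, use $\log(\tilde{g}_j + 1) \leq j\log d + \log 2 + \log^+ C_4$ (from $\tilde{g}_j \leq d^j C_4$) and sum via $\sum_{j\geq 0} j d^{-j-1} = 1/(d-1)^2$ to produce $\frac{\log d}{d-1} + \log 2 + \log^+ C_4$. Finally, Lemma~\ref{lem:arithgcd} gives $\harith(f^k(P)) \leq \tilde{a}_k + \tilde{g}_k \log 2$, and dividing $\tilde{g}_k \log 2$ by $d^k$ yields $C_4\log 2 = \hgeom(P)\log 2 + \hgeom(f)\log 2/(d-1)$, the remaining two terms of $C_5$. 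The stray $\log(\hgeom(f)+1) \leq \log 2 + \log(d-1) + \log^+ C_4$ (using $\hgeom(f) \leq (d-1)C_4$) together with the other $\log d$, $\log 2$, and $\log^+ C_4$ contributions are absorbed into the $\frac{d\log d}{(d-1)^2}$ and $\frac{d(\log 2 + \log^+ C_4)}{d-1}$ terms of $C_5$.

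The main technical point is the choice of the multiplicative form of Gauss-Gelfond. The weaker exponential bound $\|fg\|_v \leq 2^{\deg(fg)}\|f\|_v\|g\|_v$ from~\eqref{eq:nonarchgauss} would contribute a $d\tilde{g}_k\log 2$ term per step, and this term, after division by $d^{k+1}$, leaves a divergent series $\sum_j (\text{constant})$; only the logarithmic dependence on $\tilde{g}_k$ afforded by the multiplicative form makes the running average $\tilde{a}_k / d^k$ converge, yielding the bounded $C_5$.
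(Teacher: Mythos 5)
Your overall route is the same as the paper's: iterate the un-reduced coordinate tuples $P_{k+1,i}=F_i(P_k)$, control the per-step norm inflation with the multiplicative (Gelfond) part of Lemma~\ref{lem:gauss}, telescope after dividing by powers of $d$ (this is exactly the role of the paper's estimate~\eqref{eq:weirdsum}), and only at the end correct for a possible common factor via the proof of Lemma~\ref{lem:arithgcd}. The geometric bound and the recursion $\tilde{a}_{k+1}\le d\tilde{a}_k+\harith(f)+\log(\hgeom(f)+1)+N\log(d+1)+(d-1)\log(\tilde{g}_k+1)$ are correct as stated.

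The gap is the final absorption claim. Carrying out your telescoping exactly, your argument yields the constant $C_5'=\harith(P)+C_4\log 2+\frac{1}{d-1}\bigl(\harith(f)+N\log(d+1)+\log(\hgeom(f)+1)\bigr)+\frac{\log d}{d-1}+\log 2+\log^+C_4$, and one checks that $C_5'\le C_5$ if and only if $\log(\hgeom(f)+1)\le \log 2+\log^+C_4+\frac{\log d}{d-1}$. This fails in general: for instance $d=5$, $\hgeom(P)=0$, $\hgeom(f)=4$ gives $C_4=1$ and $\log 5>\log 2+\frac{\log 5}{4}$, so $C_5'>C_5$. Indeed, even with your estimate $\log(\hgeom(f)+1)\le\log 2+\log(d-1)+\log^+C_4$, the claimed absorption amounts to $(d-1)\log(d-1)\le\log d$, which holds only for $d=2$. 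Since the entire content of this lemma is the explicit constant that later feeds Corollary~\ref{cor:main}, this must be repaired; as written you prove the lemma only with a slightly larger constant. The fix is exactly how the paper avoids any $\log(\hgeom(f)+1)$ term: when applying the iterated Gelfond inequality to $c_{i,\alpha}P_{k,j_1}\cdots P_{k,j_d}$, order the multiplications so that the coefficient $c_{i,\alpha}$ is the factor whose degree is never counted (multiply by it first, then bring in the $d$ factors $P_{k,j_i}$ one at a time, each costing $\deg P_{k,j_i}+1\le \tilde{g}_k+1$). The archimedean inflation per step is then $d\log(\tilde{g}_k+1)$ with no $\hgeom(f)$ contribution, the recursion becomes $\tilde{a}_{k+1}\le d\tilde{a}_k+\harith(f)+N\log(d+1)+d\log(\tilde{g}_k+1)$, and your own telescoping together with~\eqref{eq:weirdsum} then produces exactly the stated $C_5$.
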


\begin{proof}
The claim~\eqref{eq:trivboundgeom} comes from the previous lemma, or specifically from~\eqref{eq:genhupper} combined with the usual telescoping sum argument.

For the second claim, choose again homogeneous forms $F_i$ representing $f$, and let $P_0=P$ with entries $P_{0, 0}, ..., P_{0, N}$. We now define a sequence of tuples $P_k$ of polynomials $P_{k, i}$ by
\[P_{k+1, i}=F_i(P_{k}).\]
Writing $F_{i, \mathfrak{m}}\in K(t)$ for the coefficient of monomial $\mathfrak{m}$ in $F_i$, and noting that there are at most $(d+1)^N$ monomials of degree $d$ in the $P_i$, we have for $v\in M_K$  that
\begin{align}
\log\|P_{k+1, i}\|_v&\leq \max\log\|F_{i, \mathfrak{m}}\mathfrak{m}(P_k)\|_v+N\log^+|d+1|_v\nonumber\\
&\leq \log\|F\|_v+d\log\|P_k\|_v+d\log^+|\max\deg(P_{k,i})+1|_v\label{eq:pkfi}\\&\quad +N\log^+|d+1|_v \nonumber
\end{align}
by Lemma~\ref{lem:gauss}.

We digress briefly to note that, for any real number $x\geq 0$, we have
\begin{equation}\label{eq:weirdsum}\sum_{j=0}^k\frac{\log(1+d^jx)}{d^j}\leq \frac{d\log d}{(d-1)^2}+\frac{d(\log 2+\log^+ x)}{(d-1)}.\end{equation}
To see this, simply note that $\log(1+d^j x)\leq \log 2 + j\log d+\log^+ x$, and apply the usual sums
\[\sum_{j=0}^\infty d^{-j}=\frac{d}{d-1}\qquad\text{ and }\qquad\sum_{j=0}^\infty jd^{-j}=\frac{d}{(d-1)^2}.\]

 Using the fact that 
\[\max\deg(P_{k, i})\leq d^k(\hgeom(P)+\frac{1}{d-1}\hgeom(f))\] and the estimate~\eqref{eq:weirdsum},
we may iterate~\eqref{eq:pkfi} to obtain
\begin{multline}\label{eq:pkest}
	\log\|P_k\|_v\leq d^k\log\|P_0\|_v+\frac{d^k-1}{d-1}\left(\log\|F\|_v+N\log^+|d+1|_v \right) \\ +\frac{d\log^+|d|_v}{(d-1)^2}+\frac{C_v}{(d-1)}
\end{multline}
where
\[C_v=\begin{cases} d\log 2+d\log^+ (\hgeom(P)+\frac{1}{d-1}\hgeom(f))& v\text{ is archimedean}\\
0 &\text{ otherwise.}	
\end{cases}
\]
When we sum over all places, we obtain
\begin{multline*}
\sum_{v\in M_K}n_v\log\|P_k\|_v\leq d^k\harith(P)+\frac{d^k-1}{d-1}\left(\harith(f)+N\log(d+1)\right) \\ +\frac{d\log d}{(d-1)^2}+\frac{d\log 2+d\log^+(\hgeom(P)+\frac{1}{d-1}\hgeom(f))}{(d-1)},	
\end{multline*}
although we note that it is entirely possible that $P_{k, 0}, ..., P_{k, N}$ admit a common factor, and so the right-hand-side is not necessarily $\harith(f^k(P))$. By the proof of Lemma~\ref{lem:arithgcd}, though, we have
\begin{align*}
\harith(f^k(P))&\leq \sum_{v\in M_K}n_v\log\|P_k\|_v+\max\deg(P_i)\log 2\\ &\leq \sum_{v\in M_K}n_v\log\|P_k\|_v+d^k(\hgeom(P)+\frac{1}{d-1}\hgeom(f))\log 2.
\end{align*}
Combining with~\eqref{eq:pkest}, we have the claimed bound~\eqref{eq:trivboundarith}, since $d^k\geq 1$.
\end{proof}

%

%
%
%

We now prove a special cases of the main result. As noted in Section~\ref{sec:B}, Lemma~\ref{lem:red}, this already implies the result in general over the base $B=\PP^1$.

\begin{proof}[Proof of Theorem~\ref{th:main} when $B=\PP^1$ and $X=\PP^N$]
First, when we apply Lemma~\ref{lem:famheights}, we will need to know that $f_t$ is an endomorphism of $\PP^N$ of degree $d$, and so we will need $a(t)\neq 0$, 
%
with $a$ as in the proof of Lemma~\ref{lem:explicitspec}. If $a(t)=0$, then $h(t)\leq h(a)+\deg(a)\log 2$. Since $a$ is an $r\times r$ determinant in the coefficients of $f$, for some $r\leq C_3$, we have $a(t)\neq 0$ as long as
\begin{equation}\label{eq:goodred}h(t)> C_3\left(\harith(f) +\hgeom(f)\log 2+\log C_3\right)\end{equation} and so we will assume this lower bound on $h(t)$. We also then have $f(P)_t=f_t(P_t)$. For now, it will be convenient also to suppose that there is no $k$ with $\hgeom(f^k(P))=0$ (this implies $C_4> 0$), although we shall see that things get even easier when this fails.

We have, for any $k$,
	\begin{align}
	\nonumber d^k\left|\hat{h}_{f_t}(P_t)-\hat{h}_f(P)h(t)\right|	&=\left|\hat{h}_{f_t}(f^k(P)_t)-\hat{h}_f(f^k(P))h(t)\right|\\
\label{eq:tgpn1}	&\leq  \left|\hat{h}_{f_t}(f^k(P)_t)-h(f^k(P)_t)\right|\\
\label{eq:tgpn2} &\quad +\left|\hgeom(f^k(P))h(t)-h(f^k(P)_t)\right|\\ 
\label{eq:tgpn3}&\quad +\left|\hgeom(f^k(P))h(t)-\hat{h}_f(f^k(P))h(t)\right|,
	\end{align}
	and we will bound the three terms separately. 
	For~\eqref{eq:tgpn1}, we have from~\eqref{eq:fibreheight} in Lemma~\ref{lem:famheights} that
	\[\left|\hat{h}_{f_t}(f^k(P)_t)-h_{\PP^N}(f^k(P)_t)\right|\leq C_{1}h(t)+C_2.\]
		For~\eqref{eq:tgpn3}, Lemma~\ref{lem:famheights}~\eqref{eq:genfib} gives
		\[\left|\hgeom(f^k(P))h(t)-\hat{h}_f(f^k(P))h(t)\right|\leq C_{1} h(t).\]

The term~\eqref{eq:tgpn2} requires more attention. By Lemmas~\ref{lem:explicitspec} and~\ref{lem:growth}, we have
\begin{align*}
\left|\hgeom(f^k(P))h(t)-h(f^k(P)_t)\right|&\leq 4\hgeom(f^k(P))\harith(f^k(P))\\ &\quad +4\hgeom(f^k(P))\log \hgeom(f^k(P))\\&\quad+8\hgeom(f^k(P))\log 2+\log\hgeom(f^k(P))\\&\quad+\log (N+1)\\
&\leq 4d^{2k}C_4C_5+4d^kC_4\log (d^kC_4)\\&\quad +8d^kC_4\log  2+\log(d^kC_4)+\log(N+1)\\
&\leq d^{2k}C_6
\end{align*}
for \[C_6=4C_4C_5+4C_4\log(dC_4) +8C_4\log  2+\log(dC_4)+\log(N+1)\] since $1, k\leq d^k$.
	Now choose $k$ so that
	\[d^{2k}\leq h(t)< d^{2(k+1)}\]
	and hence $d^{-k}\leq dh(t)^{-1/2}$,
to obtain 
	\begin{align}
\nonumber\left|\hat{h}_{f_t}(P_t)-\hat{h}_f(P)h(t)\right|&\leq dh(t)^{-1/2}\Big(2C_{1}h(t)+C_2+C_6h(t)\Big)\\
& \leq h(t)^{1/2}d(2C_{1}+C_2+C_6)\label{eq:mainb}
	\end{align}
		as long as $h(t)\geq 1$ (in addition to our previous assumed lower bound).
		
We are left to deal with the case in which $\hgeom(f^k(P))=0$ for some values of $k$. Note that, for those values of $k$, the estimates in~\eqref{eq:tgpn1} and~\eqref{eq:tgpn3} remain unchanged, while \eqref{eq:tgpn2} becomes
\[\left|\hgeom(f^k(P))h(t)-h(f^k(P)_t)\right|=h(f^k(P)_t)=\harith(f^k(P))\leq d^kC_5\]
by Lemma~\ref{lem:explicitspec} and Lemma~\ref{lem:growth}.  Since $d^k\leq d^{2k}$, we obtain by the same estimate as~\eqref{eq:mainb}  with $C_6$ replaced by $C_5$, and in general
\begin{equation}
\left|\hat{h}_{f_t}(P_t)-\hat{h}_f(P)h(t)\right|\leq h(t)^{1/2}d(2C_{1}+C_2+\max\{C_5, C_6\})\label{eq:techbound}	
\end{equation}
 even for values of $k$ with $\hgeom(f^k(P))=0$.
\end{proof}

\begin{proof}[Proof of Corollary~\ref{cor:main}]
Suppose that $P_t$ is preperiodic for $f_t$. By~\eqref{eq:techbound}, under the assumption~\eqref{eq:goodred} we have $\hat{h}_{f_t}(P_t)=0$ and hence
\[\hat{h}_f(P)h(t)^{1/2}\leq d(2C_{1}+C_2+\max\{C_5, C_6\}),\]
unless $h(t)\leq 1$.
Now,
\begin{align*}
C_5&\leq \htot(P)+\frac{1}{d-1}\htot(f)+\frac{d\log^+2(\htot(P)+\frac{1}{d-1}\htot(f))}{(d-1)}\\&\quad+\frac{N}{d-1}\log(d+1)+\frac{d\log d}{(d-1)^2}\\
&\leq (6+C_7)(\htot(P)+\htot(f))
\end{align*}
once $\htot(P)+\htot(f)\geq 1$, for
\[C_7=\frac{N}{d-1}\log(d+1)+\frac{d\log d}{(d-1)^2}.\]
And
\[C_4=\hgeom(P)+\frac{1}{d-1}\hgeom(f)\leq \htot(P)+\htot(f),\]
so
\begin{align*}
C_6&=4C_4C_5+4C_4\log(dC_4) +8C_4\log  2+\log(dC_4)+\log(N+1)\\
&\leq 4C_4C_5 +4dC_4^2+(8+d)C_4\log 2+\log(N+1)\\
&\leq 4(6+C_7+d)(\htot(P)+\htot(f))^2+(8d+2)\log2(\htot(P)+\htot(f))\\&\quad +\log(N+1)\\
&\leq (12d+26+4C_7+\log(N+1))(\htot(P)+\htot(f))^2
\end{align*}
as long as $\htot(P)+\htot(f)\geq 1$. Finally,
\[C_{1}=\frac{C_3\hgeom(f)}{d-1}\leq \frac{C_3}{d-1}(\htot(P)+\htot(f)),\]
and
\begin{multline*}
C_2\leq \frac{C_3(\htot(f)+\log C_3)}{d-1}+\frac{\log(N+1)+N\log (N(d-1)+1)}{d-1}\\
\leq \frac{2C_3^2+C_8}{d-1}(\htot(P)+\htot(f))
\end{multline*}
for
\[C_8=\log(N+1)+N\log (N(d-1)+1).\]
Combining these gives
\[\hat{h}_f(P)h(t)^{1/2}\leq C_9(\htot(P)+\htot(f))^2)\]
for
\[C_9=d\left(2\frac{2C_3+2C_3^2+C_8}{d-1}+12d+26+4C_7+\log(N+1)\right).\]
As long as $\hat{h}_f(P)\neq 0$, this gives
\[h(t)\leq C_9^2\frac{(\htot(P)+\htot(f)+1)^4}{\hat{h}_f(P)^2},\]
still under the hypothesis that $\htot(P)+\htot(f)\geq 1$ and~\eqref{eq:goodred}, where $C_9$ depends just on $N$ and $d$. On the other hand, if $\htot(P)+\htot(f)<1$, then we have $\harith(P), \hgeom(P), \harith(f), \hgeom(f)<1$, in which case $d(2C_{1}+C_2+\max\{C_5, C_6\})$ is already bounded just in terms of $N$ and $d$. So, increasing $C_9$ if necessary, we still have the bound above.

Of course, if~\eqref{eq:goodred} fails, then we have the alternative bound 
\[h(t)\leq C_3\left(\harith(f) +\hgeom(f)\log 2+\log C_3\right)\leq C_3\log C_3(\htot(f)+1).\]
The Corollary follows by taking $C=\max\{C_9^2, C_3\log C_3\}$.
\end{proof}


\section{More general canonical heights}\label{sec:k3}

Here we continue working over the field $K(t)$, with $K$ a number field, but consider a more general framework. The main result here is the following.
\begin{theorem}\label{th:matrix}
Let $f:X\to X$ be a family of dynamical systems defined over $K(t)$, let $M\subseteq \operatorname{Pic}(X)$ be a free module of finite rank generated by semi-ample line bundles, with $f^*M\subseteq M$, and let $\rho(f^*)$ be the spectral radius of $f^*$ on $M$.  If $L\in M\otimes\RR$ satisfies $f^*L\cong L^{\otimes \alpha}$ with $\alpha>1$ real, then we have
\[\hat{h}_{f_t, L_t}(P_t)=\hat{h}_{f, L}(P)h(t)+O\left(h(t)^{1+\epsilon-\log\alpha/2\log\rho(f^*)}\right)\]
for any $\epsilon>0$. In particular, if $\alpha=\rho(f^*)$, we have
\begin{equation}\label{eq:k3}\hat{h}_{f_t, L_t}(P_t)=\hat{h}_{f, L}(P)h(t)+O\left(h(t)^{\frac{1}{2}+\epsilon}\right).\end{equation}
\end{theorem}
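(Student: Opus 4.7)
My plan is to mirror the proof of Theorem~\ref{th:main} for $(B,X)=(\PP^1,\PP^N)$, upgrading the single geometric height $\hgeom(P)$ to a vector indexed by a basis of $M$. Choose generators $L_1,\dots,L_r$ of $M$ consisting of semi-ample line bundles, pick $n_i\geq 1$ with $L_i^{\otimes n_i}$ base-point-free, and let $\phi_i:X\to\PP^{N_i}$ be the resulting morphism over $K(t)$, so that $\phi_i^*\mathcal{O}(1)\cong L_i^{\otimes n_i}$. For $P:\PP^1\to X$ set $\hgeom_i(P)=\hgeom(\phi_i\circ P)/n_i$, $\harith_i(P)=\harith(\phi_i\circ P)/n_i$, and $h_{X,L_i}=(h_{\PP^{N_i}}\circ\phi_i)/n_i$, and extend $\RR$-linearly to $L=\sum c_iL_i$; the Call--Silverman canonical height $\hat h_{f,L}$ continues to exist by telescoping $f^*L\cong L^{\otimes\alpha}$. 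Writing $A=(A_{ij})$ for the matrix representing $f^*$ on $M$, functoriality gives $\hgeom_i(f(P))=\sum_jA_{ij}\hgeom_j(P)$ exactly, so
\[\hgeom_i(f^k(P))=O_\epsilon\bigl((\rho(f^*)+\epsilon)^k\bigr)\]
for every $\epsilon>0$, the $\epsilon$ absorbing the polynomial-in-$k$ factor produced by possibly non-diagonalizable Jordan blocks of $A$. A parallel iteration of the Gau\ss--Gelfond estimates in the spirit of Lemma~\ref{lem:growth} yields the same bound for $\harith_i(f^k(P))$.

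Next I would extend Lemma~\ref{lem:famheights} to the present context by carrying out the Macaulay-resultant elimination coordinate-wise for each $\phi_i\circ f$ and $\RR$-linearly combining, producing constants independent of $Q\in X(K(t))$ and of all but finitely many $t$ for which
\[|h_{X,L}(Q)-\hat h_{f,L}(Q)|\leq C\quad\text{and}\quad |h_{X_t,L_t}(Q_t)-\hat h_{f_t,L_t}(Q_t)|\leq Ch(t)+C'.\]
Applying Lemma~\ref{lem:explicitspec} to each $\phi_i\circ f^k(P)$ and combining gives the specialization estimate
\[\bigl|h_{X,L}(f^k(P))h(t)-h_{X_t,L_t}(f^k(P)_t)\bigr|=O_\epsilon\bigl((\rho(f^*)+\epsilon)^{2k}\bigr),\]
since the error in Lemma~\ref{lem:explicitspec} is governed by the product $\hgeom\cdot\harith$.

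The three-term decomposition of Section~1, applied to $Q=f^k(P)$, therefore yields
\[\bigl|\hat h_{f_t,L_t}(f^k(P)_t)-\hat h_{f,L}(f^k(P))h(t)\bigr|\leq C_1h(t)+C_2(\rho(f^*)+\epsilon)^{2k},\]
and dividing by $\alpha^k$ via $\hat h_{f_t,L_t}\circ f_t=\alpha\hat h_{f_t,L_t}$ gives
\[\bigl|\hat h_{f_t,L_t}(P_t)-\hat h_{f,L}(P)h(t)\bigr|\leq C_1\alpha^{-k}h(t)+C_2\bigl((\rho(f^*)+\epsilon)^2/\alpha\bigr)^k.\]
Taking $k\approx\log h(t)/\bigl(2\log(\rho(f^*)+\epsilon)\bigr)$ balances the two terms and produces the bound $O\bigl(h(t)^{1-\log\alpha/(2\log(\rho(f^*)+\epsilon))}\bigr)$, which after relabelling $\epsilon$ is $O\bigl(h(t)^{1+\epsilon-\log\alpha/(2\log\rho(f^*))}\bigr)$. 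The principal obstacle is the first step: semi-ampleness, rather than very ampleness, forces one to work through the auxiliary morphisms $\phi_i$ and to re-derive Lemmas~\ref{lem:famheights} and~\ref{lem:growth} matrix-wise with respect to $A$, and the $\epsilon$-loss in the final exponent is exactly the cost of replacing $\|A^k\|$ by $(\rho(A)+\epsilon)^k$ when $A$ fails to be diagonalizable.
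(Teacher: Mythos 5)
Your proposal is essentially the paper's own argument: defining geometric and arithmetic heights via the morphisms attached to the semi-ample generators, proving growth bounds $O\bigl((\rho+\epsilon)^k\bigr)$ for both along the orbit, feeding these into Lemma~\ref{lem:explicitspec} to get a specialization error $O\bigl((\rho+\epsilon)^{2k}\bigr)$, and then running the three-term decomposition at $f^k(P)$ with the optimal choice of $k$ — this is exactly the content of Lemmas~\ref{lem:matrixpull}, \ref{lem:specradupper}, \ref{lem:matex} and the final balancing in the paper. Two small caveats: the relation $\hgeom_i(f(P))=\sum_j A_{ij}\hgeom_j(P)$ is not exact but only holds up to $O(1)$ (and its arithmetic analogue only up to $O(\hgeom_E(P))$), which is harmless since your iteration already absorbs such errors into the $\epsilon$; and the fibrewise bound $|h_{X_t,L_t}-\hat h_{f_t,L_t}|\leq Ch(t)+C'$ should simply be quoted from \cite[Theorem~3.1]{MR1255693}, since for general $X$ the maps $\phi_i\circ f$ are not endomorphisms of projective space and the Macaulay-resultant elimination of Lemma~\ref{lem:famheights} does not literally apply.
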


We begin by extending somewhat the machinery of the arithmetic heights defined in the previous section.
Let $X$ be a projective variety defined over $K(t)$. 
For any morphism $\phi:X\to\PP^N$ over $K(t)$  and $P\in X(K(t))$, set \[\hgeom_\phi(P)=\hgeom(\phi(P))\qquad\text{ and }\qquad\harith_\phi(P)=\harith(\phi(P)).\] 
We collect some basic properties of heights that we will need below (note that $\hgeom_\phi$ is just the usual function field height with respect to $\phi^*\mathcal{O}(1)$).
\begin{lemma}\label{lem:heightsmatrix}
	Let $\phi:X\to \PP^n$ and $\psi:X\to \PP^m$, both defined over $K(t)$.
	\begin{enumerate}
	\item \label{it:segrearith}
If $\phi\#\psi:X\to \PP^{(n+1)(m+1)-1}$ is the composition of $\phi$ and $\psi$ through the Segre embedding, then
	\[\hgeom_{\phi\#\psi}=\hgeom_\phi+\hgeom_\psi\]
	and
	\[\harith_{\phi\#\psi}=\harith_\phi+\harith_\psi+O(\hgeom_\phi+\hgeom_\psi).\]
	\item \label{it:nullarith}	
If $\phi^*\mathcal{O}(1)\cong \psi^*\mathcal{O}(1)$, then \[\hgeom_\phi=\hgeom_\psi+O(1)\] and
	\[\harith_\phi=\harith_\psi+O(\hgeom_\phi+\hgeom_\psi).\]
	\item\label{it:nonnegarith} If $\phi^*\mathcal{O}(1)\otimes \psi^*\mathcal{O}(-1)$ is generated by global sections, then
\[\hgeom_\psi\leq \hgeom_\phi + O(1),\]
and
\[\harith_\psi\leq \harith_\phi +O(\hgeom_\phi).\]
	\end{enumerate}
	\end{lemma}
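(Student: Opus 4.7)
The plan is to handle the three parts in order, with (1) feeding into (2) and (3), and (2) feeding into (3).

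Part (1) I would prove by direct computation with Segre coordinates. If $\phi(P)=[p_0:\cdots:p_n]$ and $\psi(P)=[q_0:\cdots:q_m]$ are normalized tuples of coprime polynomials in $K[t]$, then the Segre coordinates of $\phi\#\psi(P)$ are the products $p_iq_j$, and these remain coprime by unique factorization in $K[t]$. This gives $\hgeom_{\phi\#\psi}(P)=\max_{i,j}\deg(p_iq_j)=\hgeom_\phi(P)+\hgeom_\psi(P)$ on the nose. For the arithmetic claim, Lemma~\ref{lem:gauss} yields $\|p_iq_j\|_v=\|p_i\|_v\|q_j\|_v$ at nonarchimedean places and the same up to a factor of $2^{\hgeom_\phi+\hgeom_\psi}$ at archimedean ones; summing over places produces the claimed $O(\hgeom_\phi+\hgeom_\psi)$ error.

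For part (2), the geometric equality is immediate because $\hgeom_\phi(P)=\deg P^*\phi^*\mathcal{O}(1)$ depends only on the isomorphism class of the pullback. For the arithmetic comparison, I would fix an isomorphism $\phi^*\mathcal{O}(1)\cong\psi^*\mathcal{O}(1)=L$ (a scalar in $K(t)^\times$, since the difference bundle is trivial) and form the combined morphism $\tau:X\to\PP^{n+m+1}$ whose coordinates are the $\phi_k$ together with the $\psi_j$ viewed as sections of $L$. Then $\tau^*\mathcal{O}(1)\cong L$, so $\hgeom_\tau=\hgeom_\phi=\hgeom_\psi$; crucially, this equality of line-bundle degrees forces the common factors arising upon projection from $\tau(P)$ to recover $\phi(P)$ or $\psi(P)$ to lie in $K^\times$, and by the product formula they do not contribute to Gauss-norm sums. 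The inequality $\harith_\phi,\harith_\psi\leq\harith_\tau$ is then immediate.

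The reverse inequality $\harith_\tau\leq\harith_\phi+O(\hgeom_\phi)$, and symmetrically with $\psi$, is the main obstacle. It amounts to a place-by-place estimate $|\psi_j(P)|_v\leq C_v\|\phi(P)\|_v$ whose multiplicative corrections satisfy $\sum n_v\log C_v=O(\hgeom_\phi)$. This is an integral-dependence/effective Nullstellensatz statement for the section ring $\bigoplus_a H^0(X,L^{\otimes a})$: one needs identities of the form $\psi_j^a=\sum_{|I|=a}A_{j,I}\phi^I$ (modulo the homogeneous ideal of $\phi(X)\subseteq\PP^n$) with $a\geq 1$ and $A_{j,I}\in K(t)$ depending only on $\phi,\psi$. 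Such identities are produced by an elimination argument parallel to the one in the proof of Lemma~\ref{lem:explicitspec}, exploiting that $\{\phi_k\}$ is base-point-free on $X$; substituting $P$ and applying Lemma~\ref{lem:gauss} then yields the bound, with the $\hgeom_\phi$-scale error coming from Gauss-norm corrections at archimedean places.

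For part (3), the geometric inequality is immediate: $\hgeom_\phi(P)-\hgeom_\psi(P)=\deg P^*(\phi^*\mathcal{O}(1)\otimes\psi^*\mathcal{O}(-1))\geq 0$, since globally generated line bundles pull back to nonnegative-degree bundles on $\PP^1$. For the arithmetic bound I would choose a base-point-free system of global sections $s_0,\ldots,s_r$ of $M=\phi^*\mathcal{O}(1)\otimes\psi^*\mathcal{O}(-1)$, defining $\sigma:X\to\PP^r$ with $\sigma^*\mathcal{O}(1)\cong M$, so that $(\sigma\#\psi)^*\mathcal{O}(1)\cong\phi^*\mathcal{O}(1)$. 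Then part (2) applied to $\sigma\#\psi$ and $\phi$ gives $\harith_{\sigma\#\psi}=\harith_\phi+O(\hgeom_\phi)$, and part (1) together with the trivial bound $\harith_\sigma\geq 0$ (a consequence of the product formula) yields $\harith_\psi\leq\harith_{\sigma\#\psi}-\harith_\sigma+O(\hgeom_\sigma+\hgeom_\psi)\leq\harith_\phi+O(\hgeom_\phi)$, as required.
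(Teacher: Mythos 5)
Your part (1) is essentially the paper's argument verbatim (coprimality of the products $p_iq_j$ plus Lemma~\ref{lem:gauss}), and the skeleton of your part (3) --- introduce $\sigma$ with $(\sigma\#\psi)^*\mathcal{O}(1)\cong\phi^*\mathcal{O}(1)$, then combine (1), (2), $\harith_\sigma\geq 0$ and $\hgeom_\sigma+\hgeom_\psi=\hgeom_\phi+O(1)$ --- is also what the paper does. The problem is the arithmetic half of part (2), which is the heart of the lemma. The identities $\psi_j^a=\sum_{|I|=a}A_{j,I}\phi^I$ with \emph{scalar} coefficients $A_{j,I}\in K(t)$ on which your argument rests need not exist. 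Base-point-freeness of $\{\phi_k\}$ only yields, by Nullstellensatz-type elimination, an ideal membership $\psi_j^a=\sum_k B_k\phi_k$ with the $B_k$ global sections of powers of $L$ on $X$; it does not place $\psi_j^a$ in the $K(t)$-subalgebra generated by $\phi_0,\dots,\phi_n$, which is what substituting $P$ and applying Lemma~\ref{lem:gauss} requires. Concretely, take $X=\PP^1_{K(t)}$, $\phi=[x^2:y^2]$ and $\psi=[x^2+xy+y^2:xy]$: both are morphisms with $\phi^*\mathcal{O}(1)\cong\psi^*\mathcal{O}(1)\cong\mathcal{O}(2)$, the image of $\phi$ is all of $\PP^1$ (so there is no ideal to work modulo), yet $(x^2+xy+y^2)^a$ contains the monomial $x^{2a-1}y$ with coefficient $a\neq 0$, so no power of $\psi_0$ lies in $K(t)[x^2,y^2]$. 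The elimination in Lemma~\ref{lem:explicitspec} succeeds only because the homogeneous coordinate ring of $\PP^1$ (or of $\PP^N$ in Lemma~\ref{lem:famheights}) is generated in degree one by the coordinates of the identity map; for a non-embedding $\phi$, or on a general $X$, the monomials in the $\phi_k$ span only a proper subspace of $H^0(X,L^{\otimes a})$.

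This is precisely the point the paper's proof is engineered around: it first treats closed embeddings $\phi,\psi$ for which $H^0(\PP^n,\mathcal{O}(1))\to H^0(X,\phi^*\mathcal{O}(1))$ is surjective --- then each $\phi_i$ is a $K(t)$-\emph{linear} combination $c_i\phi_i=\sum_j a_{i,j}\psi_j$ of the $\psi_j$, which gives the place-by-place bound directly --- then passes to Veronese powers $\phi^{\# k},\psi^{\# k}$ to achieve that surjectivity for arbitrary embeddings, and finally handles arbitrary morphisms by writing each pullback as a difference of bundles defining embeddings, which is also why $\harith_L$ is defined via such differences in the first place. Your $\tau$-construction does not substitute for this reduction; moreover, the exact-equality claims you invoke along the way (that $\hgeom_\phi$ depends only on the isomorphism class of $\phi^*\mathcal{O}(1)$, and that the common factors arising upon projecting $\tau(P)$ lie in $K^\times$) are false as stated --- take $\phi=\mathrm{id}$ and $\psi$ a $\operatorname{PGL}_2(K(t))$-translate on $\PP^1$ --- though only the $O(1)$ and $O(\hgeom)$ versions are needed there and those are standard. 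The genuine gap is the nonexistent scalar-coefficient identity; part (2), and with it part (3), needs the embedding/Veronese/difference-of-ample reduction (or an equivalent) in its place.
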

	
	\begin{proof}
		The proofs for the statements about $\hgeom$, which is the usual function field height, are standard (see, e.g.,~\cite[Chapter~2]{MR2216774}). For the arithmetic height, we proceed by similar arguments.

		Note that claim~\ref{it:segrearith} is just a claim about the Segre map. Write $Q=[Q_{0}: \cdots : Q_{n}]\in \PP^n$ with $Q_{ i}\in K[t]$ with no common factor, and similarly for $Q'\in \PP^m$. If $\sigma:\PP^n\times \PP^m\to \PP^{(n+1)(m+1)-1}$ is the Segre embedding, then $\sigma(Q, Q')$ has coordinates  $Q_{i}Q'_{ j}$, which again have no common factor. By Lemma~\ref{lem:gauss},
		\begin{align*}
\left|\log\|Q_{i}\|_v+\log\|Q'_{j}\|_v-\log\|Q_{i}Q'_{j}\|_v\right|&\leq (\deg(Q_{i})+\deg(Q'_{j}))\log^+|2|_v\\
&\leq (\hgeom(Q)+\hgeom(Q'))\log^+|2|_v.	
		\end{align*}
		Taking the maximum over all $i$ and $j$, and summing over all places gives
		\[\harith(\sigma(Q, Q'))=\harith(Q)+\harith(Q')+O(\hgeom(Q)+\hgeom(Q')).\]
		Now apply this with $Q=\phi(P)$ and $Q'=\psi(P)$, as $\phi\#\psi(P)=\sigma(\phi(P), \psi(P))$ by definition.

		For claim~\ref{it:nullarith} we first consider the case where $\phi$ and $\psi$ are closed embeddings such that the natural map $H^0(\PP^n, \mathcal{O}(1))\to H^0(X, \phi^*\mathcal{O}(1))$ is surjective, and similarly for $\psi$, following the proof of~\cite[Proposition~2.5.9, pp.~49-51]{MR2216774}. In this case,  $H^0(X, \psi^*\mathcal{O}(1))$ has a basis among the $\psi_j$, and  the isomorphism $\phi^*\mathcal{O}(1)\cong\psi^*\mathcal{O}(1)$ ensures that each $\phi_i$ the $\psi_j$, say
		\[c_i\phi_i=a_{i, 0}\psi_0+\cdots +a_{i, s}\psi_m\]
		 with $c_i, a_{i, 0}\in K[t]$. Choosing a trivialization of the line bundle and coordinates $\psi_j(P)$ with no common factor, it follows that
		 \begin{align*}
\log\|\phi_i(P)\|_v&\leq \log\max\|\psi_j(P)\|_v+\log\max\|a_{i, j}\|_v-\log\|c_i\|_v\\
		 &\quad +O_v(\max\deg(\psi_j(P))+\max\deg(a_{i, j})+\deg(c_i))		\\
		 &\leq \log\|\psi_0(P), ..., \psi_m(P)\|_v+O_v(\hgeom_\psi(P))+O_v(1), 	
		 \end{align*}
		 where the $O_v(1)$ vanishes for all but finitely many $v$.
	Summing over all places of $K$,  we have
	\[\harith_\phi(P)\leq \harith_\psi(P)+O(\hgeom_\phi(P)+\hgeom_\psi(P))\]
	from Lemma~\ref{lem:arithgcd},
and by symmetry we conclude claim~\ref{it:nullarith} in this case.

In general, if $\phi$ and $\psi$ are embeddings then, as in the proof of 	\cite[Proposition~2.5.9, pp.~49-51]{MR2216774}, there exists a $k$ such that $H^0(\PP^n, \mathcal{O}(k))\to H^0(X, \phi^*\mathcal{O}(k))$ is surjective.
Writing $\phi^{\# k}$ for the composition of $\phi$ with the $k$th monomial map $\PP^n\to\PP^{\binom{n+k}{k}-1}$, and applying the previous argument to $\phi^{\# k}$ and $\psi^{\# k}$, we conclude~\eqref{it:nullarith} without the added assumption (using part~\ref{it:segrearith} to deduce that $\harith_{\phi^{\# k}}=k\harith_\phi+O(\hgeom_\phi)$).
		
		For claim~\ref{it:nullarith} in general, note that we can write $\phi^*\mathcal{O}(1)=\sigma^*\mathcal{O}(1)\otimes \tau^*\mathcal{O}(-1)$ and $\psi^*\mathcal{O}(1)\cong \theta^*\mathcal{O}(1)\otimes \xi^*\mathcal{O}(-1)$, where $\sigma, \tau, \theta, \xi$ are all embeddings. From the paragraph above and part~\ref{it:segrearith}, we have
		\[\harith_{\sigma}+\harith_\xi=\harith_{\theta}+\harith_{\tau}+O(\hgeom_\sigma+\hgeom_\xi+\hgeom_\tau+\hgeom_\theta),\]
		but also
		\[\harith_\sigma = \harith_\phi+\harith_\tau + O(\hgeom_\sigma + \hgeom_\phi+\hgeom_\tau)\]
	and
		\[\harith_\theta = \harith_\psi+\harith_\xi + O(\hgeom_\theta + \hgeom_\psi+\hgeom_\xi).\]
		These three relations prove the claim, once we note that
		\[\hgeom_\sigma = \hgeom_\phi+\hgeom_\tau + O(1)\]
	and
		\[\hgeom_\theta = \hgeom_\psi+\hgeom_\xi + O(1).\]

		For claim~\ref{it:nonnegarith}, note that if $\phi^*\mathcal{O}(1)\otimes \psi^*\mathcal{O}(-1)$ is generated by global sections, then there is  a morphism $\theta:X\to \PP^k$ such that \[\phi^*\mathcal{O}(1)\cong \psi^*\mathcal{O}(1)\otimes\theta^*\mathcal{O}(1)\cong (\psi\#\theta)^*\mathcal{O}(1).\]
		Applying parts~\ref{it:nullarith} and~\ref{it:segrearith}, and noting that $\harith_\theta\geq 0$, we have
		\begin{align*}
			\harith_\psi &\leq \harith_\psi+\harith_\theta \\
			 &= \harith_\phi + O(\hgeom_\phi + \hgeom_\psi+\hgeom_\theta)\\
			&= \harith_\phi + O(\hgeom_\phi),
		\end{align*}
		since $\hgeom_\theta, \hgeom_\psi\leq\hgeom_\theta+ \hgeom_\psi=\hgeom_\phi$.
	\end{proof}

In light of this lemma, we can and will define $\hgeom_L$ and $\harith_L$ relative to $L\in \operatorname{Pic}(X)$ by choosing $\phi:X\to \PP^n$ and $\psi:X\to \PP^m$ with $L\cong \phi^*\mathcal{O}(1)\otimes \psi^*\mathcal{O}(-1)$, and setting \[\hgeom_L=\hgeom_\phi-\hgeom_\psi\qquad\text{ and }\qquad\harith_L=\harith_\phi-\harith_\psi .\] This depends on the choice of $\phi$ and $\psi$, but Lemma~\ref{lem:heightsmatrix} circumscribes the extent of this dependence.
More explicitly, we have the following lemma.
\begin{lemma}\label{lem:isomheights}
Let $E$ be ample.
\begin{enumerate}
\item \label{it:welldef} If $L\cong M$ then	 \[\hgeom_L=\hgeom_M+O(1)\]
and
\begin{equation}\label{eq:ampledom}\harith_L=\harith_M+O(\hgeom_E).\end{equation}
In particular, these relations hold for different choices of height function for $L$.
\item\label{it:additive} For any choice of height function relative to $L$ and $M$,
 \[\hgeom_{L\otimes M}=\hgeom_{L}+\hgeom_{M}+O(1)\]
 and
\[\harith_{L\otimes M}=\harith_{L}+\harith_{M}+O(\hgeom_E).\]
\item \label{it:amps} For any $L$ we have
\[\hgeom_L=O(\hgeom_E)\]
and
\[\harith_L=O(\hgeom_E+\harith_E).\]
\end{enumerate}
\end{lemma}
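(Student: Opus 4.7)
My approach is to reduce each part of the lemma to an application of Lemma~\ref{lem:heightsmatrix}, after choosing presentations $L\cong\phi^*\mathcal{O}(1)\otimes\psi^*\mathcal{O}(-1)$. The geometric statements follow from the standard machinery of function field heights and mirror the arithmetic arguments, so I focus on the arithmetic parts. The linchpin will be a preliminary bound: for any morphism $\tau:X\to\PP^k$ over $K(t)$,
\[0\leq\harith_\tau=O(\harith_E+\hgeom_E),\]
with the implied constants allowed to depend on $\tau$ and $E$.

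To prove the preliminary, the lower bound $\harith_\tau\geq 0$ comes from the non-negativity of the arithmetic height on $\PP^k(K(t))$ evaluated at coprime polynomial coordinates (one can scale to a primitive representative, which changes the tuple only by a scalar and hence leaves the height unchanged by the product formula). For the upper bound, fix $m\geq 1$ large enough that $E^{\otimes m}$ is very ample and $E^{\otimes m}\otimes\tau^*\mathcal{O}(-1)$ is globally generated, and let $\sigma:X\to\PP^n$ be an embedding with $\sigma^*\mathcal{O}(1)\cong E^{\otimes m}$. Then Lemma~\ref{lem:heightsmatrix}~\ref{it:nonnegarith} gives $\harith_\tau\leq\harith_\sigma+O(\hgeom_\sigma)$. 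To compare $\harith_\sigma$ to $\harith_E$, fix any presentation $E\cong\alpha^*\mathcal{O}(1)\otimes\beta^*\mathcal{O}(-1)$; iterating the Segre relation (part~\ref{it:segrearith}) gives a presentation of $E^{\otimes m}$, and then part~\ref{it:nullarith} applied to $\sigma$ versus this Segre composition, together with the geometric bounds $\hgeom_\alpha,\hgeom_\beta=O(\hgeom_E)$, yields $\harith_\sigma=m\harith_E+O(\hgeom_E)$. Combining the two inequalities gives the preliminary.

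With the preliminary in hand, part~\ref{it:amps} follows immediately from any presentation of $L$, applying the preliminary to each of $\phi$ and $\psi$. For part~\ref{it:welldef}, take presentations of $L$ and $M$. The isomorphism $L\cong M$ gives $(\phi_L\#\psi_M)^*\mathcal{O}(1)\cong(\phi_M\#\psi_L)^*\mathcal{O}(1)$; applying part~\ref{it:nullarith} to the two sides, expanding each by part~\ref{it:segrearith}, and bounding the $\hgeom$ error terms via the preliminary yields
\[(\harith_{\phi_L}-\harith_{\psi_L})-(\harith_{\phi_M}-\harith_{\psi_M})=O(\hgeom_E),\]
which is the claim; specializing to $L=M$ with different presentations covers the remark about different choices of height function. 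For part~\ref{it:additive}, the natural presentation $L\otimes M\cong(\phi_L\#\phi_M)^*\mathcal{O}(1)\otimes(\psi_L\#\psi_M)^*\mathcal{O}(-1)$ combined with the Segre relation (part~\ref{it:segrearith}) gives the claim directly, and any other choice of presentation reduces to this one via part~\ref{it:welldef}.

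The main obstacle will be the preliminary: not because any single step is hard, but because one must carefully track how the $O$-constants depend on the data to ensure that only $\harith_E$ and $\hgeom_E$ (together with the fixed integer $m$) appear in the final bound, rather than data depending on whichever line bundle is ultimately being compared. Once the preliminary is in place, the three parts of the lemma are formal consequences of the Segre-and-compare machinery of Lemma~\ref{lem:heightsmatrix}.
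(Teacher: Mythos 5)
Your proposal is correct, and for parts~\ref{it:welldef} and~\ref{it:additive} it is essentially the paper's own argument: cross-multiply the presentations, apply Lemma~\ref{lem:heightsmatrix}, parts~\ref{it:segrearith} and~\ref{it:nullarith}, and absorb the geometric error terms into $O(\hgeom_E)$ using the (standard) geometric case of part~\ref{it:amps}. Where you diverge is part~\ref{it:amps}: the paper proves parts~\ref{it:welldef} and~\ref{it:additive} first and then gets the arithmetic bound by choosing $k$ with $E^{\otimes k}\otimes L^{-1}$ ample and combining nonnegativity with additivity, whereas you bound each morphism height in a presentation directly, choosing $m$ with $E^{\otimes m}\otimes\tau^*\mathcal{O}(-1)$ globally generated and invoking Lemma~\ref{lem:heightsmatrix}, part~\ref{it:nonnegarith}. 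Both mechanisms rest on the same ingredients (twisting by a large power of $E$, nonnegativity of morphism heights, and the error control of Lemma~\ref{lem:heightsmatrix}), but your ordering yields the upper and lower bounds on $\harith_L=\harith_\phi-\harith_\psi$ simultaneously, while the paper's displayed chain gives only the upper bound, the lower bound coming from the symmetric argument applied to $L^{-1}$; your route also avoids any appearance of circularity, since the preliminary uses only Lemma~\ref{lem:heightsmatrix} and the geometric bounds. Two cosmetic points: in part~\ref{it:welldef} the $\hgeom$ error terms are controlled by the geometric statement of part~\ref{it:amps}, not by your arithmetic preliminary as written; and in the preliminary, the comparison $\harith_\sigma=m\harith_E+O(\hgeom_E)$ should formally be an application of part~\ref{it:nullarith} to the Segre joins $\sigma\#\beta\#\cdots\#\beta$ versus $\alpha\#\cdots\#\alpha$, since $E^{\otimes m}$ is presented as a difference of pullbacks--this is exactly the cross-multiplication you already perform in part~\ref{it:welldef}, so nothing is lost.
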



\begin{proof}
The  claims about the geometric height are standard, and follow directly from Lemma~\ref{lem:heightsmatrix}.

For  claim~\ref{it:welldef} for the additive height, suppose our height functions were
$\harith_L=\harith_\phi - \harith_\psi$ and $\harith_M=\harith_\sigma - \harith_\tau$, and note that we have $\phi^*\mathcal{O}(1)\otimes \tau^*\mathcal{O}(1)\cong \sigma^*\mathcal{O}(1)\otimes \psi^*\mathcal{O}(1)$.
 Then Lemma~\ref{lem:heightsmatrix} gives
\[\harith_L=\harith_M+O(\hgeom_\phi+\hgeom_\psi+\hgeom_\sigma+\hgeom_\tau),\]
and we may apply claim~\ref{it:amps} to the geometric heights to replace the error term with $O(\hgeom_E)$.

For claim~\ref{it:additive} for the arithmetic height, again take $\harith_L=\harith_\phi-\harith_\psi$ and $\harith_M=\harith_\sigma-\harith_\tau$, and note that $\harith_{L\otimes M}=\harith_{\phi\# \sigma}-\harith_{\psi\# \tau}$ is one choice of height for $L\otimes M$. Lemma~\ref{lem:heightsmatrix} now gives
\[\harith_{L\otimes M}=\harith_L+\harith_M+O(\hgeom_\phi+\hgeom_\psi+\hgeom_\sigma+\hgeom_\tau),\]
and again the error term is $O(\hgeom_E)$.

Claim~\ref{it:amps} follows by taking $k$ large enough that $E^{\otimes k}\otimes L^{-1}$ is ample.
Then
\begin{align*}
0&\leq \harith_{E^{\otimes k}\otimes L^{-1}}+O(\hgeom_{E^{\otimes k}\otimes L^{-1}})\\	
&= k\harith_E-\harith_L+O(\hgeom_E+\hgeom_L)\\
&=-\harith_L+O(\hgeom_E+\harith_E).
\end{align*}
\end{proof}


Now, let $\mathbf{L}=(L_1, ..., L_r)$ be a tuple of line bundles generating the free module $M$ of rank $r$. We choose morphisms $\phi_i, \psi_i$  from $X$ to projective space, with $L_i\cong \phi_i^*\mathcal{O}(1)\otimes \psi_i^*\mathcal{O}(-1)$, and set
\[\hgeom_{L_i}=\hgeom_{\phi_i}-\hgeom_{\psi_i}\qquad\text{ and }\qquad \harith_{L_i}=\harith_{\phi_i}-\harith_{\psi_i}.\] Finally, for $\mathbf{x}=(x_1, ..., x_r)\in \RR^r$ we set \[\mathbf{x}^T\mathbf{L}=\bigotimes_{i=1}^r L_i^{\otimes x_i}\in M\otimes \RR,\] and
\[\hgeom_{\mathbf{x}}=\sum_{i=1}^rx_i\hgeom_{L_i}\qquad\text{ and }\qquad\harith_{\mathbf{x}}=\sum_{i=1}^rx_i\harith_{L_i}.\]
Note that, by the freeness of $M$, each element of $M\otimes \RR$ can be written uniquely as $\mathbf{x}^T\mathbf{L}$.
 Now,  suppose that $f^*L_i\cong \bigotimes_{j=1}^r L_j^{\otimes A_{i, j}}$ for all $i$, and let $A$ be the matrix with entries $A_{i, j}$, and set
\[\|A\|=\sup_{\mathbf{y}\neq \mathbf{0}}\frac{\|A\mathbf{y}\|}{\|\mathbf{y}\|},\]
noting that $\|A\|\leq r\max|A_{i, j}|$.

\begin{lemma}\label{lem:matrixpull}
Let $P\in X(K(t))$ and $\mathbf{x}\in \RR^r$, and fix $E$ ample. We have
\[\hgeom_{\mathbf{x}}(f(P))=\hgeom_{A\mathbf{x}}(P)+O(\|\mathbf{x}\|)\]
and	
\[\harith_{\mathbf{x}}(f(P))=\harith_{A\mathbf{x}}(P)+O(\|\mathbf{x}\|\hgeom_E(P)),\]
where the implied constants  depend on $f$, $A$, and $E$, but not on $P$ or $\mathbf{x}$.
\end{lemma}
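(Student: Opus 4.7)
The plan is to prove the lemma by linearity: establish the analogous identity for each individual generator $L_i$ separately, and then take the linear combination dictated by $\mathbf{x}$. The two key ingredients are the functoriality $\harith_\phi(f(P))=\harith_{\phi\circ f}(P)$, which is immediate from the definition of $\harith_\phi$ and $\hgeom_\phi$, and the isomorphism invariance and additivity of heights recorded in Lemma~\ref{lem:isomheights}. Concretely, fixing the presentations $L_i\cong\phi_i^*\mathcal{O}(1)\otimes\psi_i^*\mathcal{O}(-1)$ used to define $\harith_{L_i}$ and $\hgeom_{L_i}$, functoriality gives
\[
\harith_{L_i}(f(P)) \;=\; \harith_{\phi_i\circ f}(P)-\harith_{\psi_i\circ f}(P),
\]
which is by construction one admissible choice of height function for the line bundle $f^*L_i\cong(\phi_i\circ f)^*\mathcal{O}(1)\otimes(\psi_i\circ f)^*\mathcal{O}(-1)$. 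The same remark applies to $\hgeom$.

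Next I would feed in the isomorphism hypothesis $f^*L_i\cong\bigotimes_{j=1}^{r}L_j^{\otimes A_{i,j}}$. Parts~\eqref{it:welldef} and~\eqref{it:additive} of Lemma~\ref{lem:isomheights} say respectively that any two height functions for the same line bundle agree up to $O(\hgeom_E)$, and that the tensor-additive choice $\sum_j A_{i,j}\harith_{L_j}$ is an admissible height function for the right-hand tensor product. Combining these,
\[
\harith_{L_i}(f(P)) \;=\; \sum_{j=1}^{r} A_{i,j}\,\harith_{L_j}(P)+O(\hgeom_E(P)),
\]
with an implied constant depending on $f$, $A$, $E$, and the chosen presentations, but not on $P$. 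The geometric version is identical, with error $O(1)$ in place of $O(\hgeom_E(P))$.

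Finally, I would multiply the $i$th such identity by $x_i$, sum over $i$, and swap the order of summation. Since there are only $r$ identities to combine and all norms on $\RR^r$ are equivalent, the accumulated error becomes $O(\|\mathbf{x}\|\hgeom_E(P))$ for $\harith$ and $O(\|\mathbf{x}\|)$ for $\hgeom$. The main term rewrites as $\sum_j\bigl(\sum_i A_{i,j}x_i\bigr)\harith_{L_j}(P)$, which is $\harith_{A\mathbf{x}}(P)$ under the convention identifying the matrix of $f^*$ on $M\otimes\RR$ with $A$, yielding both claimed identities.

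The only real obstacle is bookkeeping: in the arithmetic case each invocation of Lemma~\ref{lem:isomheights} contributes $O(\hgeom_E(P))$ rather than a bounded error, and one must verify that these $r$ independent contributions combine into a single $O(\|\mathbf{x}\|\hgeom_E(P))$ rather than something depending on $P$ through any of the $\hgeom_{L_j}$ individually. Because $r$ is fixed and $E$ dominates each $L_j$ via Lemma~\ref{lem:isomheights}\eqref{it:amps}, this control is immediate, and no ingredient beyond the machinery of the preceding two lemmas is required.
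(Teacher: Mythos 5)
Your proposal is correct and is essentially the paper's own argument: establish the identity for each generator $L_i$ by noting that $\harith_{L_i}\circ f$ is one admissible height for $f^*L_i$ and invoking the isomorphism invariance and additivity of Lemma~\ref{lem:isomheights} (with error $O(\hgeom_E)$, resp.\ $O(1)$), then sum with coefficients $x_i$ to get the $O(\|\mathbf{x}\|\hgeom_E(P))$ and $O(\|\mathbf{x}\|)$ error terms. The only point worth noting, which you already flag via your "convention" remark, is the transpose bookkeeping in identifying $\sum_i x_i\harith_{A\mathbf{e}_i}$ with $\harith_{A\mathbf{x}}$, exactly as the paper does.
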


\begin{proof}
If $\mathbf{e}_i$ is the $i$th standard basis vector, we have by Lemma~\ref{lem:isomheights}
\begin{align*}
\hgeom_{\mathbf{e}_i}\circ f&=\hgeom_{L_i}\circ f  \\
&=\hgeom_{f^* L_i}  \\
&=\hgeom_{A\mathbf{e}_i}+O(1),
\end{align*}
since $(\phi_i\circ f)^*\mathcal{O}(1)\cong (A\mathbf{e}_i)^T\mathbf{L}$. The implied constant depends on $i$, but of course we may take a bound that works for all $i$, giving
\begin{align*}
	\hgeom_{\mathbf{x}}(f(P))&=\sum_{i=1}^rx_i\hgeom_{\mathbf{e}_i}(f(P))\\
	&=\sum_{i=1}^rx_i\left(\hgeom_{A\mathbf{e}_i}(P)+O(1)\right)\\
	&=\hgeom_{A\mathbf{x}}(P)+O(\|\mathbf{x}\|).
\end{align*}

The relation
\[f^*L_i\cong (A\mathbf{e}_i)^T\mathbf{L}\cong \bigotimes_{j=1}^rL_j^{\otimes A_{i, j}}\]
with Lemma~\ref{lem:isomheights}~\eqref{eq:ampledom} also provides
\[\harith_{\mathbf{e}_i}\circ f=\harith_{A\mathbf{e}_i}+O(\hgeom_E),\]
 the same argument as above now produces \[\harith_{\mathbf{x}}(f(P))=\harith_{A\mathbf{x}}(P)+O(\|\mathbf{x}\|\hgeom_E(P)).\]
\end{proof}

The previous lemma in hand, we estimate the growth of the geometric and arithmetic heights in orbits.

\begin{lemma}\label{lem:specradupper}
Let $\delta>0$, let $L\in M\otimes\RR$, and let $\rho(A)$ be the spectral radius of $A$. Then
\begin{equation}\label{eq:geomgrowthmat}\hgeom_L(f^k(P))=O\left( (\rho(A)+\delta)^k\right),\end{equation}
and
\[\harith_L(f^k(P))=O\left( (\rho(A)+\delta)^{k(1+\delta)}\right),\]
where the implied constants depends on $f$, $P$, $A$, $L$, and $\delta$, but not on $k$.
\end{lemma}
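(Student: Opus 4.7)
The plan is to iterate Lemma~\ref{lem:matrixpull} $k$ times and control the compounding errors via Gelfand's spectral radius formula. Write $L = \mathbf{x}^T\mathbf{L}$ for some $\mathbf{x}\in\RR^r$, so that $\hgeom_L = \hgeom_\mathbf{x}$ and $\harith_L = \harith_\mathbf{x}$.

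For the geometric bound, each application of the geometric half of Lemma~\ref{lem:matrixpull} replaces the coordinate vector $\mathbf{x}$ by $A\mathbf{x}$ and the iterate $f^{j}(P)$ by $f^{j-1}(P)$, at a cost of $O(\|\mathbf{x}\|)$. Telescoping $k$ times yields
\[\hgeom_L(f^k(P)) = \hgeom_{A^k\mathbf{x}}(P) + \sum_{i=0}^{k-1} O\bigl(\|A^i\mathbf{x}\|\bigr).\]
The leading term equals $\sum_j (A^k\mathbf{x})_j\,\hgeom_{L_j}(P)$, which is $O(\|A^k\mathbf{x}\|)$ since the $\hgeom_{L_j}(P)$ are fixed constants depending only on $P$. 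By Gelfand's formula, $\|A^i\|_{\mathrm{op}} \leq C_{\delta'}(\rho(A)+\delta')^i$ for any $\delta'>0$; applying this with $\delta'=\delta/2$ and summing the geometric series absorbs the whole expression into $O((\rho(A)+\delta)^k)$ after relabeling, giving \eqref{eq:geomgrowthmat}.

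For the arithmetic bound I would iterate the arithmetic half of Lemma~\ref{lem:matrixpull} analogously, obtaining
\[\harith_L(f^k(P)) = \harith_{A^k\mathbf{x}}(P) + O\!\left(\sum_{i=0}^{k-1}\|A^i\mathbf{x}\|\cdot \hgeom_E(f^{k-1-i}(P))\right)\]
for a fixed ample $E$. The leading term is $O((\rho(A)+\delta)^k)$ by the same argument as above. The crucial step is to choose $E$ to lie in $M\otimes\RR$, so that the geometric bound just established applies to $\hgeom_E$ itself and yields $\hgeom_E(f^j(P)) = O((\rho(A)+\delta)^j)$; such an $E$ is available because the semi-ample generators of $M$ can be combined (augmenting $M$ by a fixed ample bundle if necessary, without disturbing the action of $A$) into an ample $\RR$-line bundle inside $M\otimes\RR$. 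With this choice the error sum collapses to
\[\sum_{i=0}^{k-1}(\rho(A)+\delta)^i(\rho(A)+\delta)^{k-1-i} = k\,(\rho(A)+\delta)^{k-1},\]
and since $\rho(A)\geq\alpha>1$ we have $k = o((\rho(A)+\delta)^{\delta k})$ as $k\to\infty$, so the linear factor $k$ is absorbed into $(\rho(A)+\delta)^{k(1+\delta)}$ after a last relabeling of $\delta$.

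The main obstacle is the choice of $E$: an arbitrary ample bundle on $X$ would yield only $\hgeom_E(f^j(P)) = O(\kappa^j)$ for some $\kappa$ depending on $E$ and $f$ that need not equal $\rho(A)$, and the $(1+\delta)$ slack in the exponent of the arithmetic bound is just enough to absorb the polynomial factor $k$ produced by the convolution-type sum, not any genuine exponential mismatch between $\kappa$ and $\rho(A)$. Anchoring $E$ inside $M\otimes\RR$, so that the growth of $\hgeom_E$ along orbits is governed by the spectral radius of $A$, is therefore the key technical point of the argument.
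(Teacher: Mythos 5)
Your overall architecture is the same as the paper's: iterate Lemma~\ref{lem:matrixpull}, control $\|A^i\mathbf{x}\|$ by Gelfand's formula, bound the leading terms $\hgeom_{A^k\mathbf{x}}(P)$ and $\harith_{A^k\mathbf{x}}(P)$ using that $P$ is fixed (the paper routes this through \eqref{eq:ampledom}, you use linearity in $\mathbf{x}$; both are fine), and absorb the factor $k$ coming from the convolution sum $\sum_{i=0}^{k-1}(\rho(A)+\delta)^i(\rho(A)+\delta)^{k-1-i}=k(\rho(A)+\delta)^{k-1}$ into the exponent $k(1+\delta)$, exactly as the paper does (its display has a typo $\|A^k\|$ where $\|A^i\|$ is meant, but it performs the same absorption, citing $k<(\rho(A)+\delta)^{\delta k}$). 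You have also correctly isolated the delicate point: the iterated errors involve $\hgeom_E(f^{k-1-i}(P))$ for an ample $E$, and one needs this to grow no faster than roughly $(\rho(A)+\delta)^{k-1-i}$.

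The gap is in your justification of that point. It is not true in general that the semi-ample generators of $M$ can be combined into an ample class of $M\otimes\RR$: in the paper's own example following Theorem~\ref{th:matrix}, $X=\PP^1\times\PP^1$, $f=(f_1,f_2)$ with $\deg f_1=d>\deg f_2=e$, and $M=\ZZ\,\mathcal{O}(0,1)$, no element of $M\otimes\RR$ is ample. Nor does ``augmenting $M$ by a fixed ample bundle without disturbing the action of $A$'' repair this: the enlarged module need not be $f^*$-stable at all, and if you pass to an $f^*$-stable enlargement (say $\operatorname{NS}(X)\otimes\RR$), the growth of $\hgeom_E$ along the orbit is then governed by the spectral radius of $f^*$ on the enlargement, not by $\rho(A)$ --- in the example, $\hgeom_{\mathcal{O}(1,1)}(f^j(P))$ grows like $d^j$ rather than $(e+\delta)^j$ --- and, as you yourself note, the $(1+\delta)$ slack can absorb a polynomial factor but not an exponential mismatch. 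So the ample $E\in M\otimes\RR$ your argument hinges on need not exist under the lemma's stated hypotheses. For comparison, the paper at this step simply applies \eqref{eq:geomgrowthmat} to $\hgeom_E$, which is legitimate precisely when $M\otimes\RR$ does contain an ample class (as in the Wehler K3 application, where $D_1\otimes D_2$ is ample and lies in $M$); your instinct that this is the crux is right, but the existence claim you use to discharge it is false in general, so as written your proof of the arithmetic bound does not go through.
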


\begin{proof}
Write $L=\mathbf{x}^T\mathbf{L}$. 
From Lemma~\ref{lem:matrixpull}, we have
\begin{align*}
\hgeom_{\mathbf{x}}(f^k(P))&=\hgeom_{A\mathbf{x}}(f^{k-1}(P))+O(\|\mathbf{x}\|)\\
&=\hgeom_{A^2\mathbf{x}}(f^{k-2}(P))+O(\|\mathbf{x}\|+\|A\mathbf{x}\|)\\
&=\hgeom_{A^k\mathbf{x}}(P)+O\left(\sum_{i=0}^{k-1}\|A^i\mathbf{x}\|\right).
\end{align*}
Now, by Gelfand's formula for the spectral radius, we have 
 $\|A^k\|\leq C(\rho(A)+\delta)^k$ for some constant $C$, independent of $k$. Because $\|A\mathbf{x}\|\leq r\|A\|\cdot\|\mathbf{x}\|$, we then have
\[\sum_{i=0}^{k-1}\|A^i\mathbf{x}\|\leq rC\|\mathbf{x}\|\sum_{i=0}^{k-1}(\rho(A)+\delta)^i\leq \frac{rC\|\mathbf{x}\|\left((\rho(A)+\delta)^k-1\right)}{\rho(A)+\delta-1}=O( (\rho(A)+\delta)^k).\]
On the other hand,  note that for $E$ ample,~\eqref{eq:ampledom} implies
\[\hgeom_{\mathbf{x}}=O(\|\mathbf{x}\|\hgeom_E)\]
for $\mathbf{x}\in\RR^r$. We then  also have
\[\hgeom_{A^k\mathbf{x}}(P)=O\left( \|A^k\mathbf{x}\|\hgeom_E(P)\right)=O\left((\rho(A)+\delta)^k\right).\]
This completes the proof of the first claim.

Next, observe that
\begin{align*}
\harith_{\mathbf{x}}(f^k(P))&=	\harith_{A\mathbf{x}}(f^{k-1}(P))+O(\|\mathbf{x}\|\hgeom_E(f^{k-1}(P)))\\
&=\harith_{A^k\mathbf{x}}(P)+O\left(\sum_{i=0}^{k-1}\|A^i\mathbf{x}\|\hgeom_E(f^{k-1-i}(P))\right).
\end{align*}
We have from~\eqref{eq:ampledom} that
\[\harith_{A^k\mathbf{x}}(P)=O\left( \|A^k\mathbf{x}\|(\harith_E(P)+\hgeom_E(P))\right)=O\left((\rho(A)+\delta)^k\right)\]
just as above. On the other hand,~\eqref{eq:geomgrowthmat} applied to $\hgeom_E$, combined again with $\|A^k\|\ll (\rho(A)+\delta)^k$, gives
\begin{align*}
\sum_{i=0}^{k-1}\|A^i\mathbf{x}\|\hgeom_E(f^{k-1-i}(P))&=O\left(\|\mathbf{x}\|\sum_{i=0}^{k-1}	\|A^k\|(\rho(A)+\delta)^{k-1-i}\right)\\
&=O\left(\sum_{i=0}^{k-1}(\rho(A)+\delta)^k\right)\\
&=O\left((\rho(A)+\delta)^{k(1+\delta)}\right),
\end{align*}
since $k<(\rho(A)+\delta)^{\delta k}$ for $k$ large enough.
\end{proof}

Now, away from a set of bounded height, each $L_i\in \operatorname{Pic}(X)$ specializes to a line bundle $L_{i, t}\in \operatorname{Pic}(X_t)$, and each $\phi_i$ specializes to a morphism $\phi_{i, t}:X_t\to\PP^{m_i}$ with $\phi_{i, t}^*\mathcal{O}(1)\cong L_{i, t}$. We define $h_{L_{i, t}}=h\circ \phi_{i, t}$, and for any $\mathbf{x}^T\in \RR$, we define
\[h_{\mathbf{x}^T, t}=\sum x_ih_{L_{i, t}}.\] For $L\in M\otimes \RR$ we define $h_{L_t}=h_{\mathbf{x}^T, t}$, where $\mathbf{x}^T\mathbf{L}=L$.
With these chosen height functions, we have the following.
\begin{lemma}\label{lem:matex}
For any $L\in M\otimes \RR$ and any $\delta>0$, 
\[\left|\hgeom_L(f^k(P))h(t)-h_{L_t}(f^k(P)_t)\right|= O\left((\rho(A)+\delta)^{k(2+\delta)}\right),\]
where the implied constant is independent of $t$ and $k$.
\end{lemma}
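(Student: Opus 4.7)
The plan is to reduce the lemma to a single application, in each coordinate direction, of the specialization bound in Lemma~\ref{lem:explicitspec}, then drive the growth estimate from Lemma~\ref{lem:specradupper}. Write $L=\mathbf{x}^T\mathbf{L}=\sum_{i=1}^r x_iL_i$ for the unique $\mathbf{x}\in\RR^r$ coming from freeness of $M$. By the $\RR$-linearity of the definitions of $\hgeom_L$, $h_{L_t}$, and $\hgeom_{L_i}=\hgeom_{\phi_i}$, $h_{L_{i,t}}=h\circ\phi_{i,t}$, we have
\[\hgeom_L(f^k(P))h(t)-h_{L_t}(f^k(P)_t)=\sum_{i=1}^r x_i\bigl(\hgeom_{\phi_i}(f^k(P))h(t)-h(\phi_{i,t}(f^k(P)_t))\bigr).\]
The constants $x_i$ depend on $L$ but not on $k$ or $t$, so they can be absorbed into the final implied constant. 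Thus it suffices to bound each bracketed term for a fixed $i$.

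For each $i$, apply Lemma~\ref{lem:explicitspec} to the point $Q=\phi_i(f^k(P))\in\PP^{m_i}(K(t))$, using that $\phi_i(f^k(P))_t=\phi_{i,t}(f^k(P)_t)$ away from the finite set where specialization fails. The dominant bound from that lemma is of the form
\[\bigl|\hgeom(Q)h(t)-h(Q_t)\bigr|\ll\hgeom(Q)\harith(Q)+\hgeom(Q)\log\hgeom(Q)+\log\hgeom(Q)+1,\]
with universal constants, and $\hgeom(Q)=\hgeom_{\phi_i}(f^k(P))=\hgeom_{L_i}(f^k(P))$, similarly for the arithmetic counterpart. Now feed in the growth bounds from Lemma~\ref{lem:specradupper}: for any $\delta'>0$ there are constants so that
\[\hgeom_{L_i}(f^k(P))=O\bigl((\rho(A)+\delta')^k\bigr),\qquad \harith_{L_i}(f^k(P))=O\bigl((\rho(A)+\delta')^{k(1+\delta')}\bigr).\]
Multiplying, the product term contributes $O((\rho(A)+\delta')^{k(2+\delta')})$, and all the remaining logarithmic and additive terms from Lemma~\ref{lem:explicitspec} are asymptotically negligible compared with this, being at most polynomial in $k$ times $(\rho(A)+\delta')^k$. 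Choosing $\delta'$ small enough that $(\rho(A)+\delta')^{2+\delta'}\leq(\rho(A)+\delta)^{2+\delta}$ (which is possible since both base and exponent are continuous in the parameter and strictly increasing past $1$), we obtain the claimed bound $O((\rho(A)+\delta)^{k(2+\delta)})$ for each $i$, and hence after summing over $i$.

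The main technical point to take care of is reconciling the chosen morphism $\phi_i$ with the line bundle $L_i$: semi-ampleness of $L_i$ only guarantees $\phi_i^*\mathcal{O}(1)\cong L_i^{\otimes n_i}$ for some $n_i\geq 1$, so we should either rescale so $\hgeom_{\phi_i}=n_i\hgeom_{L_i}$ and $\harith_{\phi_i}=n_i\harith_{L_i}$ and absorb the constant factor into the implied $O(\cdot)$, or pass to $L_i^{\otimes n_i}$ throughout the generating set of $M$; either way the bookkeeping does not affect the exponent in $k$. The only other delicate aspect is verifying that the \emph{arithmetic} growth $(\rho(A)+\delta')^{k(1+\delta')}$ from Lemma~\ref{lem:specradupper} is available for $\harith_{\phi_i}=\harith_{L_i}$ (rather than only for $\harith_E$ for some distinguished ample $E$), but this is exactly what Lemma~\ref{lem:specradupper} provides for $L\in M\otimes\RR$, applied with $L=L_i$. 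The rest is bookkeeping of error terms.
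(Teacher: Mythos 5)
Your proposal is correct and follows essentially the same route as the paper: reduce to $L=L_i$ by linearity of the chosen height functions, apply Lemma~\ref{lem:explicitspec} to the composed map $\phi_i\circ f^k(P)$, and then feed in the growth bounds of Lemma~\ref{lem:specradupper}. Your extra adjustment of $\delta'$ versus $\delta$ and the remark on replacing $L_i$ by a globally generated power are harmless bookkeeping that the paper handles the same way (the latter in the proof of Theorem~\ref{th:matrix}).
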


\begin{proof}
By our choices of height functions, it suffices to prove this result for $L=L_i$. In that case, Lemma~\ref{lem:explicitspec} gives us (off of a set of bounded height)
\begin{align*}
	\left|\hgeom_{L_i}(f^k(P))h(t)-h_{L_{i, t}}(f^k(P)_t)\right|&=\left|\hgeom(\phi_i\circ f^k(P))h(t)-h((\phi_i\circ f^k(P))_t)\right|\\
	&=O\Big(\hgeom(\phi_i\circ f^k(P))\harith(\phi_i\circ f^k(P)) \\ &\quad +\hgeom(\phi_i\circ f^k(P))^2\Big)\\
	&=O\Big(\hgeom_{L_i}(f^k(P))\harith_{L_i}(f^k(P)) \\ &\quad +\hgeom_{L_i}(f^k(P))^2\Big)\\
	&=O\left((\rho(A)+\delta)^{k(2+\delta)}\right)
\end{align*}
by Lemma~\ref{lem:specradupper}.
\end{proof}

\begin{proof}[Proof of Theorem~\ref{th:matrix}]
Let $M$ be generated by $L_1, ..., L_r$, semi-ample. Note that it suffices to prove the statement after replacing all of the $L_i$ by $L_i^{\otimes m}$ for some $m\geq 1$, and so we will assume without loss of generality that the $L_i$ are generated by global sections.

Let $\epsilon>0$, and choose $\delta>0$ so that
\[\frac{\log \alpha}{(2+\delta)\log(\rho(A)+\delta)}\geq \frac{\log\alpha}{2\log \rho(A)}-\epsilon.\]
Choose $k$ so that
\begin{equation}\label{eq:rhosandwich}(\rho(A)+\delta)^{k(2+\delta)}\leq h(t)<(\rho(A)+\delta)^{(k+1)(2+\delta)}.\end{equation}
From Lemma~\ref{lem:matex}
we have	\[\left|\hgeom_L(f^k(P))h(t)-h_{L_t}(f^k(P)_t)\right|\leq C_{10} (\rho(A)+\delta)^{k(2+\delta)}\]
for some constant $C_{10}$, and for $k$ (equivalently $h(t)$) large enough. Similarly, by~\cite[Theorem~3.1]{MR1255693} we have
\[\left|\hat{h}_{f_t, L_t}(Q)-h_{X_t, L_t}(Q)\right|\leq C_{11}h(t),\]
which we apply with $Q=f^k(P)_t$, and
\[|\hgeom_L(f^k(P))-\hat{h}_{f, L}(P)|\leq C_{12}.\]
Combining these, we have
	\begin{align*}
	 \alpha^k\left|\hat{h}_{f_t, L_t}(P_t)-\hat{h}_{f, L}(P)h(t)\right|	&=\left|\hat{h}_{f_t, L_t}(f^k(P)_t)-\hat{h}_{f, L}(f^k(P))h(t)\right|\\
	&\leq  \left|\hat{h}_{f_t, L_t}(f^k(P)_t)-h_{X_t, L_t}(f^k(P)_t)\right|\\
 &\quad +\left|\hgeom_L(f^k(P))h(t)-h_{L_t}(f^k(P)_t)\right|\\ 
&\quad +\left|\hgeom_L(f^k(P))h(t)-\hat{h}_{f, L}(f^k(P))h(t)\right|\\
&\leq (C_{11}+C_{12})h(t)+C_{10}(\rho(A)+\delta)^{k(2+\delta)}\\
&\leq C_{13}h(t)
	\end{align*}
	by~\eqref{eq:rhosandwich}.
We then have
\[\alpha^k=(\rho(A)+\delta)^{k\log \alpha/\log(\rho(A)+\delta)}> h(t)^{\log \alpha/(2+\delta)\log(\rho(A)+\delta)}(\rho(A)+\delta)^{-(2+\delta)},\]
whence
\begin{align*}
\left|\hat{h}_{f_t, L_t}(P_t)-\hat{h}_{f, L}(P)h(t)\right|&\leq \alpha^{-k}C_{13}h(t)\\
&\leq C_{13}h(t)^{1-\log \alpha/(2+\delta)\log(\rho(A)+\delta)}(\rho(A)+\delta)^{(2+\delta)}\\
&\leq C_{14} h(t)^{1-\frac{\log\alpha}{2\log\rho(A)}+\epsilon}
\end{align*}
for $h(t)$ large enough.
\end{proof}

Note that we have $\alpha>1$, and so $\log\alpha/2\log\rho(A)>0$. In particular Theorem~\ref{th:matrix} always gives an improvement
\[\hat{h}_{f_t, L_t}(P_t)=\hat{h}_{f, L}(P)h(t)+O\left(h(t)^{1-\eta}\right)\]
 over~\eqref{eq:csvar},
for some $\eta>0$ depending on the action of $f^*$ on $\operatorname{Pic}(X)$. It is possible to artificially concoct examples in which this improvement is arbitrarily slight. For example, if $f_1, f_2$ are rational function of degree $d\geq e\geq 2$, then $f=(f_1, f_2)$ is an endomorphism of $X=\PP^1\times \PP^1$ satisfying $f^* \mathcal{O}(a, b)=\mathcal{O}(da, eb)$. Applying the theorem to $L=\mathcal{O}(0, 1)$, we have $\alpha=e$ and $\rho=d$, and so $\log\alpha/2\log\rho$ may be made arbitrarily small. On the other hand, one could here apply the theorem to $M=\ZZ\mathcal{O}(0, 1)\subseteq\operatorname{Pic}(X)$ to obtain the better exponent of $\frac{1}{2}+\epsilon$, or even apply Theorem~\ref{th:main} to the projection onto the first coordinate to eliminate the $\epsilon$.

\begin{proof}[Proof of Theorem~\ref{th:k3}]
Theorem~\ref{th:k3} follows immediately from Theorem~\ref{th:matrix}. In particular, Silverman~\cite{MR1115546} considers K3 surfaces $X\subseteq\PP^2\times \PP^2$ defined by the simultaneous vanishing two multihomogenous forms of degree $(1, 1)$ and $(2, 2)$ respectively. The hyperplanes on the two copies of $\PP^2$ define two line bundles $D_1, D_2$, and there is an automorphism $f$ of $X$ with
\begin{align*}
	f^*D_1&= D_1^{\otimes 15}\otimes D_2^{\otimes -4}\\
	f^*D_2&= D_1^{\otimes 4}\otimes D_2^{\otimes -1}.
\end{align*}
For $\beta = 2+\sqrt{3}$, and $E^+=D_1^{\otimes\beta}\otimes D_2^{ -1}$ we have $f^* E^+ \cong (E^+)^{\otimes \beta^2}$, and $\hat{h}^+$ is the canonical height associated to $f$ and $E^+$. Similarly, for $E^-= D_1^{ -1}\otimes D_2^{\otimes\beta}$ we have $(f^{-1})^* E^-\cong (E^-)^{\otimes \beta^2}$, and $\hat{h}^-$ is the canonical height associated to $f^{-1}$ and $E^-$. 
\end{proof}


\section{The case of a general base}\label{sec:B}

In Section~\ref{sec:rational} we proved Theorem~\ref{th:main} in the case $B=\PP^1$ and $X=\PP^N$. The former restriction, and the fact that $\operatorname{Pic}^0(\PP^1)$ is trivial, appears to genuinely improve our bounds, but it turns out that the latter restriction is immaterial.

\begin{lemma}\label{lem:red}
If Theorem~\ref{th:main} is true in the case $X=\PP^N$ and $L=\mathcal{O}(1)$, over a given base $B$, then it is true as stated.
\end{lemma}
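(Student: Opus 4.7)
Our plan is to reduce the general case to the special case $X=\PP^N$, $L=\mathcal{O}(1)$ by (i) embedding $X$ into a projective space via a high power of $L$, and (ii) extending $f$ to an endomorphism of that ambient $\PP^N$. Since $\hat{h}_{f, L^{\otimes m}} = m \hat{h}_{f, L}$ and the analogous relation holds on all but finitely many fibres, the desired estimate for $(f,X,L)$ is equivalent to the same estimate for $(f,X,L^{\otimes m})$ for any fixed $m\geq 1$; we may therefore replace $L$ by $L^{\otimes m}$ with $m$ chosen large enough that $L^{\otimes m}$ is very ample, yielding a closed $B$-embedding $\iota\colon X\hookrightarrow \PP^N$ with $\iota^*\mathcal{O}(1)\cong L^{\otimes m}$, and also that the restriction map $H^0(\PP^N,\mathcal{O}(d))\to H^0(X,L^{\otimes md})$ is surjective and the twisted ideal sheaf $\mathcal{I}_X(d)$ is globally generated on $\PP^N$. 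All of these hold for $m\gg 0$ by Castelnuovo--Mumford regularity of $X\subset\PP^N$.

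The next step is to extend $f$ to a degree-$d$ endomorphism $F\colon\PP^N\to\PP^N$ with $F\circ\iota=\iota\circ f$. The composition $\iota\circ f$ is given by $N+1$ global sections of $L^{\otimes md}$ on $X$; by the surjectivity above, these lift to sections $\tilde{s}_0,\ldots,\tilde{s}_N\in H^0(\PP^N,\mathcal{O}(d))$, and any such lift defines a rational map whose base locus in $\PP^N$ is disjoint from $X$ (since $\iota\circ f$ is everywhere defined). We then use the global generation of $\mathcal{I}_X(d)$ to modify the $\tilde{s}_i$ by elements of $H^0(\PP^N,\mathcal{I}_X(d))$---which does not alter the restriction to $X$---and thereby eliminate the residual base locus, producing an everywhere-defined morphism $F$ of degree $d$ over $K(B)$.

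Once $F$ is in hand, the identity $F^k\circ\iota=\iota\circ f^k$ forces $\hat{h}_F(\iota(P))=\hat{h}_{f,L^{\otimes m}}(P)$ on the generic fibre, and the analogous equality $\hat{h}_{F_t}(\iota_t(P_t))=\hat{h}_{f_t,L_t^{\otimes m}}(P_t)$ on all but finitely many specialised fibres. Applying the hypothesised $\PP^N$-case of Theorem~\ref{th:main} to $(F,\PP^N,\mathcal{O}(1))$ over $B$ with the point $\iota\circ P$ then yields
\[
\hat{h}_{f_t,L_t^{\otimes m}}(P_t)=\hat{h}_{f,L^{\otimes m}}(P)\, h_B(t)+O\!\left(h_B(t)^{2/3}\right)
\]
(with $h_B(t)^{1/2}$ in place of $h_B(t)^{2/3}$ when $B$ is rational), and dividing through by $m$ restores the statement for $(f,X,L)$. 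The main obstacle we anticipate is the base-point-free modification step: one must check that global generation of $\mathcal{I}_X(d)$ does in fact permit a simultaneous clearing of the indeterminacy locus disjoint from $X$, by a standard genericity argument on the perturbations chosen from $H^0(\PP^N,\mathcal{I}_X(d))$. This is precisely where the Castelnuovo--Mumford choice of $m$ is indispensable.
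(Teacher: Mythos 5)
Your reduction is essentially the paper's own: replace $L$ by a suitable power, embed $X$ equivariantly into $\PP^N$, extend $f$ to a degree-$d$ endomorphism of the ambient $\PP^N$, identify the canonical heights on the generic fibre and on all but finitely many special fibres, and then invoke the $\PP^N$ case. The only difference is that the existence of the pair (embedding, extension) is precisely the theorem of Fakhruddin~\cite{MR1995861} that the paper cites as a black box, whereas you sketch a direct proof of it; your flagged base-locus-clearing step does go through by the standard genericity/dimension count over the infinite field $K(B)$, and is the heart of Fakhruddin's argument.
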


\begin{proof}
 Suppose, as in the theorem, that $X$, $L$, and $f$ are defined over $K(B)$, with $f^*L\cong L^{\otimes d}$. By a theorem of Fakhruddin~\cite{MR1995861} there exist  $N, n\geq 1$, an embedding $i:X\to \PP^N$, and a morphism $g:\PP^N\to \PP^N$ of degree $d$ such that $i\circ f=g\circ i$ and $i^*\mathcal{O}(1)\cong L^{\otimes n}$. Note that we have
\[nh_{X, L}(P)=h_{\PP^N, \mathcal{O}(1)}(i(P))+O(1),\]
and so the 
canonical heights satisfy
\begin{equation}\label{eq:twocan}n\hat{h}_{f, X, L}(P)=\hat{h}_{g, \PP^N, \mathcal{O}(1)}(i(P))+O(1).\end{equation}
On the other hand, the transformation relation for the canonical heights gives
\begin{multline}
n\hat{h}_{f, X, L}(P)=\frac{n}{d^k}\hat{h}_{f, X, L}(f^k(P))=\frac{1}{d_k}\left(\hat{h}_{g, \PP^N, \mathcal{O}(1)}(g^k\circ i(P))+O(1)\right)\\=\hat{h}_{g, \PP^N, \mathcal{O}(1)}(i(P))+O(d^{-k})	
\end{multline}
for any $k\geq 0$, so in fact the two canonical heights in~\eqref{eq:twocan} are exactly equal.

Then, for all but finitely many $t\in B$ we have a specialization $i_t:X_t\to \PP^N$ which is an embedding, with $i_t^*\mathcal{O}(1)=L_t^{\otimes n}$, and we may use the same argument on each fibre to conclude that 
\[n\hat{h}_{f_t, X_t, L_t}(Q)=\hat{h}_{g_t, \PP^N, \mathcal{O}(1)}(i(Q))\]
%
%
for any $Q\in X_t(\overline{K})$, including $Q=P_t$. Applying the theorem to $i(P)\in \PP^N(B)$ relative to $g$ then gives the result for $P\in X(B)$ relative to $f$.
\end{proof}

In light of the lemma above, we will restrict attention to the case $X=\PP^N$, $L=\mathcal{O}(1)$, and we take
 $B$ to be a smooth, projective curve of genus $g\geq 1$ over a number field $K$. (These arguments could also be used in the case $B=\PP^1$, if we use $2g=1$ throughout, but the conclusions would be weaker than those in Section~\ref{sec:rational}).

Since we will necessarily be somewhat pedantic about heights on $B$, our first goal is to define a ``reference height'' relative to each divisor.
By the Riemann-Roch Theorem there is, for each point $\beta\in B$,  a morphism $\phi_\beta:B\to \PP^{g}$ such that $\phi^*_\beta\mathcal{O}(1)=\mathcal{O}(2g[\beta])$. We fix one such map for each point, and for an $\RR$-Cartier divisor $D=\sum_{\beta\in B} m_\beta[\beta]$ define
\[\href_{B, D}=\frac{1}{2g}\sum_{\beta\in B}m_\beta h_{\PP^g}\circ\phi_\beta.\]
The functions $\href_{B, D}$ are thus well-defined and linear in $D$, while also satisfying
\[h_{B, D}=\href_{B, D}+O(1)\]
for any other choice of height function.
We will show below that if $m_\beta\in\ZZ$ for all $\beta\in B$, then there exist morphisms $\phi:B\to\PP^n$ and $\psi:B\to\PP^m$ such that \[\mathcal{O}(2gD)\otimes\psi^*\mathcal{O}(1)=\phi^*\mathcal{O}(1)\quad\text{ and }\quad 2g\href_{B, D}=h\circ\phi - h\circ\psi.\] So $2g\href_{B, D}$ is always the Weil height associated to a particular presentation of the divisor $2gD$, in the sense of~\cite[Chapter~2]{MR2216774}. On the other hand, note that $\href_{B, D}$ depends on the choice of $D$ as a representative of its divisor class (only up to a bounded function, but this matters for our argument). We also note that our reference heights are chosen so that $\href_{B, D}\geq 0$ whenever $D\geq 0$.

Now, we fix a morphism $f:\PP^N\to\PP^N$ defined over $K(B)$ of degree $d\geq 2$, and a $K(B)$-rational point $P\in \PP^N$. We will also choose a tuple of functions $P_i\in K(B)$ with $P=[P_0:\cdots : P_N]$, writing $P$ for the tuple of functions $P_i$ as well. Finally, we choose homogeneous forms $F_i(\mathbf{X})\in K(B)[\mathbf{X}]$ such that \[f(\mathbf{X})=[F_0(\mathbf{X}):\cdots :F_N(\mathbf{X})].\]
We write $F$ for the endomorphism of $\AA^{N+1}_{K(B)}$ given by the $F_i$, and $\overline{F}$ for the tuple of coefficients of all of the $F_i$.

\begin{lemma}\label{lem:gooddiv}
There is a finite set $S\subseteq B$ and a sequence of divisors $D_k$ on $B$ such that
\begin{enumerate}
\item $D_k$ is supported on $S$
\item $\mathcal{O}(D_k)\cong f^k(P)^*\mathcal{O}(1)$
\item $D(F, P):=\lim_{k\to\infty} d^{-k}D_k$ exists in $\operatorname{Div}(B)\otimes \RR$
\item $D(F, F(P))=dD(F, P)$
\end{enumerate}
\end{lemma}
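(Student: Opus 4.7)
The plan is to define $D_k$ as the natural ``pole divisor'' associated to the coordinate tuple $F^k(P_0,\ldots,P_N)\in K(B)^{N+1}$, and then to verify the four conclusions in turn. The main obstacle will be proving convergence in (3), which reduces to a uniform bound on a local error term at each place of $B$.

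For any nonzero $Q=(Q_0,\ldots,Q_N)\in K(B)^{N+1}$, set $\operatorname{div}(Q)=\sum_\beta (\min_i\operatorname{ord}_\beta(Q_i))[\beta]$. Then the $Q_i$ are sections of $\mathcal{O}(-\operatorname{div}(Q))$ with no common zero, inducing the morphism $B\to\PP^N$ that pulls $\mathcal{O}(1)$ back to $\mathcal{O}(-\operatorname{div}(Q))$. Set $D_k:=-\operatorname{div}(F^k(P))\in\operatorname{Div}(B)$, so that (2) is immediate. For $S$, take the union of the support of $D_0$, the poles on $B$ of the coefficients of the $F_i$, and the zero locus of $\Res(\overline{F})$; this is finite. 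For $\beta\notin S$, the tuple $(P_0,\ldots,P_N)$ reduces modulo $\mathfrak{m}_\beta$ to a nonzero point of $\PP^N$ over the residue field, and $\overline{F}$ reduces to a morphism there, so $\min_i\operatorname{ord}_\beta(F(P)_i)=0$; iterating shows $\operatorname{ord}_\beta(D_k)=0$ for every $k$, giving (1).

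For (3), fix $\beta\in S$ and write $m_k=\operatorname{ord}_\beta(D_k)$. Let $u$ be a uniformizer at $\beta$ and set $\widetilde R^{(k)}_i:=u^{m_k}F^k(P)_i$, so that $\widetilde R^{(k)}$ is regular at $\beta$ with $\min_i\operatorname{ord}_\beta(\widetilde R^{(k)}_i)=0$. Homogeneity of $F$ yields $F^{k+1}(P)_j=u^{-dm_k}F_j(\widetilde R^{(k)})$, hence
\[m_{k+1}=dm_k+c^{(k)}_\beta,\qquad c^{(k)}_\beta:=-\min_j\operatorname{ord}_\beta(F_j(\widetilde R^{(k)})).\]
The crux is that $|c^{(k)}_\beta|\leq C_\beta$ for a constant depending only on $F$ and $\beta$. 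An upper bound $c^{(k)}_\beta\leq p_\beta$, where $p_\beta$ is the maximum pole order at $\beta$ of any coefficient of $F$, follows from $\operatorname{ord}_\beta(F_j(\widetilde R^{(k)}))\geq -p_\beta$. For the lower bound, apply Macaulay's identity (as in~\eqref{eq:nullres}): there exist forms $A_{ij}(\mathbf{X})$ over $K(B)$ and an exponent $e\geq 1$ with $\sum_i A_{ij}F_i=\Res(\overline{F})X_j^e$. Evaluating at $\widetilde R^{(k)}$ for a $j$ with $\operatorname{ord}_\beta(\widetilde R^{(k)}_j)=0$, and using $\min(a_i+b_i)\geq\min a_i+\min b_i$, yields $c^{(k)}_\beta\geq -\operatorname{ord}_\beta(\Res(\overline{F}))-Cp_\beta$ for a constant $C$ depending only on the (fixed) degrees and pole orders of the $A_{ij}$. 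The resulting estimate $|d^{-(k+1)}m_{k+1}-d^{-k}m_k|\leq C_\beta d^{-(k+1)}$ is summable, so $d^{-k}m_k$ converges for each $\beta\in S$, and thus $D(F,P):=\lim_{k\to\infty}d^{-k}D_k$ exists in $\operatorname{Div}(B)\otimes\RR$, supported on $S$.

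Finally, (4) is a formal index shift: the same construction applied to $F(P)$ produces $D_k^{F(P)}=-\operatorname{div}(F^{k+1}(P))=D_{k+1}^P$, so
\[D(F,F(P))=\lim_{k\to\infty}d^{-k}D_{k+1}^P=d\lim_{k\to\infty}d^{-(k+1)}D_{k+1}^P=d\,D(F,P).\]
The only non-routine ingredient is the uniform bound on $c^{(k)}_\beta$: it is the local counterpart of the bounded difference between the canonical height and the naive height on each fibre, and the Macaulay/resultant identity is precisely what makes the bound independent of $k$.
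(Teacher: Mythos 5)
Your proof is correct and takes essentially the same route as the paper: you define $D_k$ as the sup-norm (common factor) divisor of the coordinate tuple $F^k(P)$, bound $D_{k+1}-dD_k$ from above by the ultrametric inequality and from below via the Macaulay/resultant identity, and conclude by a bounded-difference telescoping argument, with (4) following formally from the index shift. Your place-by-place recursion $m_{k+1}=dm_k+c^{(k)}_\beta$ with $|c^{(k)}_\beta|\leq C_\beta$ is just the local form of the paper's divisor sandwich $-E\leq dD(P)-D(F(P))\leq E$, and your good-reduction description of $S$ is an equivalent variant of the paper's choice.
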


\begin{proof}
Let $\overline{\phi}=(\phi_0, \ldots , \phi_N)$  with $\phi_i\in K(B)$. Then
for \[|\psi|_\beta=e^{-\ord_{\beta}(\psi)}\]
 set
\[D(\overline{\phi})=\sum_{\beta\in B}\log\|\phi_0, ..., \phi_N\|_\beta[\beta].\]
Note that if $\phi:B\to\PP^N$ is defined by the coordinate functions $\phi_0, ..., \phi_N$, we have
\[D(\overline{\phi})=\phi^* H_i - \operatorname{div}(\phi_i),\]
for each $i$ (where $H_i$ is the $i$th coordinate hyperplane in $\PP^N$). In particular, $\mathcal{O}(D(\overline{\phi}))\cong \phi^*\mathcal{O}(1)$.

 We assume that $T\subseteq S\subseteq B$ are finite sets large enough that $\|\overline{F}\|_\beta=1$ for all $\beta\not\in T$, and $\|P\|_\beta=1$ for all $\beta\not\in S$.
First, we have  (for all $\beta\in B$)
\[\log|F_i(P)|_\beta\leq d\log\|P\|_\beta+\log\|\overline{F}\|_\beta,\]
and so 
\begin{equation}\label{eq:divupper}D(F(P))\leq dD(P)+D(\overline{F}).\end{equation}
On the other hand, as in the proof of Lemma~\ref{lem:famheights}, there are homogeneous forms $A_{i, j}(\mathbf{X})\in K(B)(\mathbf{X})$ of degree $N(d-1)$ such that
\[X_i^{(N+1)(d-1)+1}=F_0(\mathbf{X})A_{i, 0}(\mathbf{X})+\cdots+F_N(\mathbf{X})A_{i, N}(\mathbf{X})\]
for all $i$, and so from the ultrametric inequality again we have
\[d\log\|P\|_\beta\leq \log\|F(P)\|_\beta+\log\|\overline{A}\|_\beta,\]
where $\overline{A}$ is the grand tuple of coefficients of the $A_i$. We thus have
\begin{equation}\label{eq:divlower}dD(P)\leq D(F(P))+D(\overline{A})\end{equation}
(note that we may estimate the last quantity by way of the effective Nullstellensatz, but this is not particularly useful here).

Now take $T$ large enough so that $\|\overline{A}\|_\beta=1$ for $\beta\not\in S$, which is still a finite set depending just on $P$ and $F$. From~\eqref{eq:divupper} and \eqref{eq:divlower} we see that $dD(P)-D(F(P))$ is a divisor supported on $T\subseteq S$, whose order at each point of $T$ is bounded above and below. We set $D_k=D(F^k(P))$, and note by induction that $D_k$ is supported on $S$.
Note that the entries of $F^k(P)$ define $f^k(P):B\to\PP^N$, and so $\mathcal{O}(D_k)=f^k(P)^*\mathcal{O}(1)$.

Finally, let $E$ be any divisor supported on $T$ with $D(\overline{F}), D(\overline{A})\leq E$, so that
\[-E\leq dD(P)-D(F(P))\leq E.\]
Note that $E$ need not depend on $P$, here.
By the usual telescoping sum argument,
\[-\frac{1}{(d-1)d^{\min(k, m)}}E\leq d^{-k}D(F^k(P))-d^{-m}D(F^m(P))\leq \frac{1}{(d-1)d^{\min(k, m)}}E,\]
and so $d^{-k}D_k$ converges in $\operatorname{Div}(B)\otimes\RR$ (which is just to say that the orders of $d^{-k}D_k$ at each point converge). If we set
\[D(F, P)=\lim_{k\to\infty}\frac{D_k}{d^k},\]
then $D(F, F(P))=dD(F, P)$  immediately from the definition, and from the telescoping sum we have
\begin{equation}\label{eq:sandwich}-\frac{1}{d-1}E\leq D(P)-D(F, P)\leq \frac{1}{d-1}E,\end{equation}
\end{proof}

Note that we made choices of coordinate for $F$ and $P$. If $\sigma$ and $\theta$ are any two non-zero functions on $B$, then
\[D(\sigma F, \theta P)=D(F, P)+\frac{1}{d-1}\operatorname{div}(\sigma)+\operatorname{div}(\theta),\]
and so while the construction of $D(F, P)$ is sensitive to these choices, the associated class $L(f, P)=\mathcal{O}(D(F, P))\in \operatorname{Pic}(B)$ is not. It is coherent to speculate, then, that we in fact have
\[\hat{h}_{f_t}(P_t)=h_{B, L(f, P)}(t)+O(1),\]
although that still seems out of reach.

\begin{lemma}\label{lem:hrefe}
With $D_k$, $D(F, P)$, and $E$ as in Lemma~\ref{lem:gooddiv}, we have
\[\left|\href_{B, D_k}-d^k\href_{B, D(F, P)}\right|\leq \frac{1}{d-1}\href_{B, E}.\]
\end{lemma}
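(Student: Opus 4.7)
The plan is to lift the divisor-level telescoping bound already embedded in the proof of Lemma~\ref{lem:gooddiv} to an inequality of reference heights, using two structural properties of $D \mapsto \href_{B,D}$: $\RR$-linearity in the divisor argument, and monotonicity on effective $\RR$-divisors.

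First I would extract the right divisor-level inequality. The telescoping step inside the proof of Lemma~\ref{lem:gooddiv} gives
\[
-\tfrac{1}{(d-1)d^{\min(k,m)}}\,E \;\leq\; d^{-k}D_k - d^{-m}D_m \;\leq\; \tfrac{1}{(d-1)d^{\min(k,m)}}\,E.
\]
Letting $m \to \infty$, so that $d^{-m}D_m \to D(F,P)$ in $\operatorname{Div}(B)\otimes\RR$ (coefficient-wise on the finite support $S$), and then multiplying through by $d^k$, I obtain
\[
-\tfrac{1}{d-1}\,E \;\leq\; D_k - d^k\,D(F,P) \;\leq\; \tfrac{1}{d-1}\,E.
\]

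Next I would apply $\href_{B,-}$ to both sides. Linearity in $D$ is built into the definition $\href_{B,D} = \frac{1}{2g}\sum_\beta m_\beta\,h_{\PP^g}\circ\phi_\beta$, so the middle term becomes $\href_{B,D_k} - d^k\href_{B,D(F,P)}$. Monotonicity on effective $\RR$-divisors reduces to the statement that each $h_{\PP^g}\circ\phi_\beta$ is pointwise non-negative, which is the familiar fact that the standard Weil height on projective space is non-negative (the product formula applied to any nonzero coordinate is $\leq \log$ of the max). Combining linearity and monotonicity with the divisor inequality above yields
\[
-\tfrac{1}{d-1}\,\href_{B,E} \;\leq\; \href_{B,D_k} - d^k\,\href_{B,D(F,P)} \;\leq\; \tfrac{1}{d-1}\,\href_{B,E},
\]
which is exactly the claim.

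I expect no genuine obstacle here; the lemma is essentially a formal consequence of the divisor bound of Lemma~\ref{lem:gooddiv} together with the non-negativity of $\href_{B,D}$ on effective $D$ already remarked upon just before the lemma. The only mild point to verify is that the passage $m \to \infty$ commutes with the inequality, but this is automatic: both $D_k$ and $d^kD(F,P)$ lie in the finite-dimensional subspace of $\operatorname{Div}(B)\otimes\RR$ spanned by the finite set $S$, so the inequalities are just finitely many scalar inequalities on the coefficients at points of $S$.
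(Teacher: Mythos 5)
Your proof is correct and follows essentially the same route as the paper: the divisor-level sandwich $-\frac{1}{d-1}E\leq D_k-d^kD(F,P)\leq\frac{1}{d-1}E$ (which the paper obtains by applying~\eqref{eq:sandwich} to $F^k(P)$, and you obtain by letting $m\to\infty$ in the telescoping bound), followed by linearity of $\href_{B,\cdot}$ in the divisor and its non-negativity on effective divisors. The only point worth keeping in mind, which the paper flags and your limit argument implicitly uses, is that the divisor $E$ is independent of the point in the orbit, so the same $E$ controls every step $dD_j-D_{j+1}$.
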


\begin{proof}
It follows from~\eqref{eq:sandwich}, and the fact that the reference heights are linear in the divisor and non-negative for effective divisors, that
\[-\frac{1}{d-1}\href_{B, E}\leq \href_{D(P)}-\href_{B, D(F, P)}\leq \frac{1}{d-1}\href_{B, E}.\]
Now replace $P$ with $F^k(P)$, noting that $D(F, F^k(P))=d^kD(F, P)$, and use the linearity of the reference heights again to conclude that $\href_{B, D(F, F^k(P))}=d^k\href_{B, D(F, P)}$.
\end{proof}

 Now, as in~\cite[\S~2.5]{MR2216774}, we choose a morphism $\pi:B\to\PP^2$ which maps $B$ birationally to $\pi(B)\subseteq \PP^2$. Without loss of generality, we may assume that $\pi(B)$ is given by $F(x, y, z)=0$, for some homogeneous form $F$ of degree $\deg(F)=\deg_{\mathcal{O}{(1)}}(\pi(B))$, and with $F(0, 0, 1)=1$. We then have an isomorphism (as vector spaces) of the homogeneous coordinate ring $\mathcal{S}$ with with the space of homogeneous polynomials in $x$, $y$, $z$, with $z$-degree less than $\deg(F)$, and we identify these spaces. Recall that the tuple $\mathbf{p}=(p_0, ..., p_n)$ of elements of $\mathcal{S}$ is a \emph{presentation} of the morphism $\phi=[\phi_0:\cdots :\phi_n]:B\to \PP^n$ if and only if (1) $p_j\neq 0$ for any $j$ such that $\phi_j\neq 0$, and (2) for $j$ with $\phi_j\neq 0$, we have $p_i/p_j=\phi_i/\phi_j$ in $K(B)=K(\pi(B))$. The fact that $B$ and $\pi(B)$ have the same function field ensures that every morphism has a presentation, and given a presentation $\mathbf{p}$ we write $\deg(\mathbf{p})$ for the degree of the homogeneous forms $p_i$, and $h(\mathbf{p})$ for the height of the homogeneous tuple of coefficients of all of the $p_i$.
 
 For two morphisms $\phi:B\to \PP^n$ and $\psi:B\to \PP^m$ with coordinates $\phi_i$ and $\psi_j$, we write
 \[\phi\# \psi:B\to \PP^{(n+1)(m+1)-1}\]
 for the Segre join, with coordinates $\phi_i\psi_j$. We recall that $(\phi\#\psi)^*\mathcal{O}(1)=\phi^*\mathcal{O}(1)\otimes \psi^*\mathcal{O}(1)$, and that $h\circ (\phi\#\psi)=h\circ\phi+h\circ\psi$. If $\mathbf{p}$ and $\mathbf{q}$ are presentations of $\phi$ and $\psi$, with entries $p_i$ and $q_j$, then the tuple of homogeneous forms with entries $p_iq_j$ is a presentation of $\phi\# \psi$, and we will denote this presentation by $\mathbf{p}\#\mathbf{q}$.

The next lemma, a slight variation of~\cite[Theorem~2.5.14, p.~53]{MR2216774}, makes explicit the fact that  any height function relative to the trivial divisor class on $B$ is bounded. 
\begin{lemma}\label{lem:explicitheight}
Let $\phi:B\to\PP^n$ and $\psi:B\to 	\PP^m$ be morphisms with presentations $\mathbf{p}$ and $\mathbf{q}$ respectively, and suppose that $\phi^*\mathcal{O}(1)\cong \psi^*\mathcal{O}(1)$. Then
\[\left|h_\phi - h_\psi\right|\ll \max\{\deg(\mathbf{p}), \deg(\mathbf{q})\}^2(h(\mathbf{p})+h(\mathbf{q})+\log(1+\deg(\mathbf{p}))+\log(1+\deg(\mathbf{q})),\]
with implied constants depending only on $B$, $n$, and $m$.
\end{lemma}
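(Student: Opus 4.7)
The plan is to follow the strategy of~\cite[Theorem~2.5.14, pp.~53]{MR2216774} while tracking every estimate effectively, using the same style of Cramer's rule argument already employed in Lemma~\ref{lem:explicitspec} and Lemma~\ref{lem:famheights}. The isomorphism $\phi^*\mathcal{O}(1)\cong\psi^*\mathcal{O}(1)$ means that, working in $K(B)=K(\pi(B))$, each ratio $\phi_i/\phi_{i'}$ lies in the $K$-span of the ratios $\psi_j/\psi_{j'}$ after clearing a common denominator. Lifting this relation to the level of presentations, one seeks homogeneous forms $a_{i,0},\ldots,a_{i,m},c_i$ in $x,y,z$ (of some fixed degree to be chosen) such that, modulo $F$,
\[c_i\, p_i \equiv a_{i,0}\, q_0 + \cdots + a_{i,m}\, q_m,\]
with $c_i \not\equiv 0 \pmod{F}$, and symmetrically with $\mathbf{p}$ and $\mathbf{q}$ interchanged.

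First I would verify that, at a suitable degree bounded by a constant multiple of $\max\{\deg(\mathbf{p}),\deg(\mathbf{q})\}$, the existence of such a relation reduces to the existence of a nontrivial solution of a homogeneous $K$-linear system in the coefficients of the unknown $a_{i,j}$ and $c_i$: namely, after identifying both sides with homogeneous polynomials of $z$-degree less than $\deg(F)$ (using the chosen section of $\mathcal{S}$ from the coordinate ring of $\pi(B)$), one collects the coefficients of each monomial in $x,y,z$. Existence of the relation in $K(B)$ guarantees the existence of a nontrivial solution of this linear system, and Lemma~\ref{lem:cramer} then gives a solution in which each coefficient of each $a_{i,j}$ and $c_i$ is an $r\times r$ signed minor of the coefficient matrix, whose entries are drawn from the coefficients of $\mathbf{p}$, $\mathbf{q}$, and $F$ (with $F$ and $B$ absorbed into the implied constant). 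The rank $r$ is bounded by the number of unknowns, which is $O(\max\{\deg(\mathbf{p}),\deg(\mathbf{q})\}^2)$, where the quadratic factor arises from counting monomials in two independent variables (after reducing $z$-degrees via $F$). This is the step I expect to be the main technical obstacle, and it is what forces the quadratic factor in the statement.

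With the $a_{i,j}$ and $c_i$ in hand, evaluating at a point $t\in B(\overline{K})$ (avoiding the finitely many bad places where $c_i(t)=0$ for every $i$, dealt with separately) and applying the ultrametric or triangle inequality at each place $v$, one obtains
\[\log|c_i(t)|_v + \log|p_i(t)|_v \leq \log\|\mathbf{q}(t)\|_v + \log\max_{i,j}|a_{i,j}(t)|_v + O_v(1),\]
where the $O_v(1)$ is archimedean only. Choosing $i$ so that $|c_i(t)|_v$ is not too small, estimating $|a_{i,j}(t)|_v$ via the Cramer bound in terms of $h(\mathbf{p}), h(\mathbf{q})$, and $\max\{\deg(\mathbf{p}),\deg(\mathbf{q})\}$, and summing over all places of $K$ yields one direction of
\[h_\phi(t) \leq h_\psi(t) + O\big(\max\{\deg(\mathbf{p}),\deg(\mathbf{q})\}^2(h(\mathbf{p})+h(\mathbf{q})+\log(1+\deg(\mathbf{p}))+\log(1+\deg(\mathbf{q})))\big).\]
The reverse inequality follows by symmetry. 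Finally, the finitely many exceptional points where all $c_i(t)$ vanish are handled by choosing a second, generically independent system $(a'_{i,j}, c'_i)$, so that the union of their common zero loci is empty; this introduces only a further multiplicative constant depending on $B$, $n$, $m$, completing the proof.
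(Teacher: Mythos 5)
Your plan diverges from the paper's proof, which does not redo any elimination theory at all: it fixes once and for all a closed embedding $\theta:B\to\PP^3$ with a presentation $\mathbf{t}$, observes that $\phi\#\theta$ and $\psi\#\theta$ are closed embeddings, applies \cite[Theorem~2.5.14]{MR2216774} to that pair as a black box, and then uses \cite[Lemma~2.5.6]{MR2216774} to bound $\deg$ and $h$ of the joined presentations, the contribution of $\theta$ being absorbed into the $B$-dependent constant. Your proposal instead tries to reprove the effective comparison from scratch via relations $c_i p_i\equiv\sum_j a_{i,j}q_j \pmod F$ and Cramer's rule, and as sketched it has a genuine gap at the local-to-global step. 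To capture $h_\phi(t)$ you must, at each place $v$, use the index $i=i(v)$ maximizing $|p_i(\pi(t))|_v$; the product formula then does \emph{not} dispose of the terms $-\log|c_{i(v)}(\pi(t))|_v$, because the index varies with $v$, and "choosing $i$ so that $|c_i(t)|_v$ is not too small" is not available, since the index is forced on you by $\mathbf{p}$, not by $\mathbf{c}$. Moreover, bounding $|a_{i,j}(\pi(t))|_v$ trivially introduces a term $\deg(a_{i,j})\,h(\pi(t))$ that grows with $t$, and it must cancel against a matching lower bound for the $c_{i(v)}$-terms (equivalently, against the local heights of the base loci of the two presentations, which differ even though $\phi^*\mathcal{O}(1)\cong\psi^*\mathcal{O}(1)$). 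Your sketch provides no mechanism for this cancellation — the $c_i$ admissible in such relations genuinely do have common zeros wherever the $q_j$ vanish to higher order than $p_i$, so no Liouville/Nullstellensatz lower bound rescues a fixed choice — and without it the argument only yields an error term that is $O(h(t))$, which is useless here: the entire point of the lemma is an error independent of $t$. Your closing remark about a "second, generically independent system" only addresses the finitely many points where all $c_i$ vanish, not this place-by-place problem. This balancing act is precisely what \cite[Theorem~2.5.14]{MR2216774} encapsulates (for closed embeddings), which is why the paper reduces to that case by the Segre join rather than re-deriving it.

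A second, smaller gap: the existence of the relations $c_i p_i\equiv\sum_j a_{i,j}q_j\pmod F$ with $c_i\not\equiv 0$ at degree $O(\max\{\deg(\mathbf{p}),\deg(\mathbf{q})\})$ is asserted ("I would verify") but not proved; unlike the situations in Lemmas~\ref{lem:explicitspec} and~\ref{lem:famheights}, where an effective Nullstellensatz or Macaulay's resultant identity supplies the identity whose coefficients you then control by Cramer, here you need a statement about multiplication maps of linear systems on the fixed curve $B$ (Riemann--Roch or a base-point-free pencil trick, with the base loci of $\mathbf{p}$ and $\mathbf{q}$ accounted for). That part is plausible with constants depending only on $B$, but it is real work, and your rank count is also off as stated (forms of $z$-degree less than $\deg F$ give $O(\deg F\cdot D)$ monomials in degree $D$, i.e.\ a bound linear in $D$, not quadratic — harmless, since an overestimate of $r$ is still an upper bound, but indicative that the bookkeeping has not been carried out). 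In short: the route could in principle be made to work, but the two steps you defer are exactly where the content lies, and the second (the place-by-place cancellation) is a conceptual omission, not a routine verification.
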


\begin{proof}
This is a variant of~\cite[Theorem~2.5.14, p.~53]{MR2216774}. Specifically, fix a closed embedding $\theta:B\to \PP^3$, with presentation $\mathbf{t}$. Then \[\phi\# \theta:B\to \PP^{4n+3}\text{ and }\psi\#\theta:B\to \PP^{4m+3}\] are closed embeddings, and so we may apply \cite[Theorem~2.5.14, p.~53]{MR2216774} to obtain (for $C_{14}$ and $C_{15}$ depending just on $\pi(B)$)
\begin{align}
h_\phi-h_\psi &=	h_{\phi\# \theta}-h_{\psi\# \theta}\nonumber\\
&\leq  C_{14}(4n+4)\deg(\mathbf{q}\# \mathbf{t})^2(h(\mathbf{p}\# \mathbf{t})+h(\mathbf{p}\# \mathbf{t})+\log(1+\deg(\mathbf{p}\#\mathbf{t}))\nonumber\\&\quad +\log(1+\deg(\mathbf{q}\#\mathbf{t}))+\log(n+1)+\log 144+C_{15}).\label{eq:thetatrick}
\end{align}
On the other hand, \cite[Lemma~2.5.6, p.~48]{MR2216774} gives
\[\deg(\mathbf{p}\#\mathbf{t})=\deg(\mathbf{p})+\deg(\mathbf{t})\]
and
\[h(\mathbf{p}\#\mathbf{t})\leq h(\mathbf{p})+h(\mathbf{t})+\log(1+\deg(\mathbf{t}))+C_{16},\]
where the  constant $C_{16}$ depends only on $\pi(B)$. (Note that the lemma assumes that $\phi$ and $\theta$ are both closed embeddings but, as pointed out in  \cite[Remark~2.5.7, p.~49]{MR2216774}, that assumption is not needed for the inequalities above.) Combining with~\eqref{eq:thetatrick} above, we have
\[h_\phi-h_\psi\ll \deg(\mathbf{q})^2(h(\mathbf{p})+h(\mathbf{p})+\log(1+\deg(\mathbf{p}))+\log(1+\deg(\mathbf{q}))),\]
where the constants depend on $\pi(B)$, $n$, and our choice of $\theta$ (which may be made once for the curve $B$). The claim follows by swapping $\phi$ and $\psi$ in this bound.
\end{proof}

We now construct presentations of the morphisms $f^k(P):B\to\PP^N$.
\begin{lemma}
The morphisms $f^k(P):B\to \PP^N$  admit presentations $\mathbf{p}_k$ satisfying
\[\deg(\mathbf{p}_k),  h(\mathbf{p}_k)=O(d^k),\]
where the implied constants depend on $f$, $P$, $B$, and $N$, but not on $k$.
\end{lemma}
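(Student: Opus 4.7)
The plan is to build $\mathbf{p}_k$ iteratively coordinate-wise, working in $K[x,y,z]$ with a single common denominator in the homogeneous coordinate ring of $\pi(B)$ for all coefficients of $F$, and to defer the reduction modulo the defining equation of $\pi(B)$ until the very end. Writing $F_i(\mathbf{X})=\sum_\alpha F_{i,\alpha}\mathbf{X}^\alpha$ with $F_{i,\alpha}\in K(B)=K(\pi(B))$, I would fix once and for all a homogeneous form $s\in K[x,y,z]$ of some degree $\rho$ together with homogeneous forms $G_{i,\alpha}\in K[x,y,z]$ of the same degree $\rho$ such that $F_{i,\alpha}=G_{i,\alpha}/s$ in $K(\pi(B))$. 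Starting from any presentation $\mathbf{p}_0$ of $P$ lifted to $K[x,y,z]^{N+1}$, define inductively
\[\widetilde{p}_{k+1,i}:=\sum_\alpha G_{i,\alpha}\,p_{k,0}^{\alpha_0}\cdots p_{k,N}^{\alpha_N}\in K[x,y,z],\]
which is homogeneous of degree $d\deg(\mathbf{p}_k)+\rho$. The overall factor $1/s$ cancels in ratios, so $\widetilde{\mathbf{p}}_{k+1}$ is a valid (unreduced) representative of $f(f^k(P))=f^{k+1}(P)$, and reducing modulo the defining equation of $\pi(B)$ produces the desired $\mathbf{p}_{k+1}\in\mathcal{S}^{N+1}$. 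The degree recursion yields $\deg(\mathbf{p}_k)=O(d^k)$ immediately by telescoping.

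For the height, the key observation is that the multivariate Gauss--Gelfond estimate in three variables loses only a polynomial factor in the degree at each archimedean place (a factor of order $\binom{e+3}{3}$ when multiplying a form of degree $e$), so its logarithm contributes only $O(\log\deg(\widetilde{\mathbf{p}}_k))=O(k)$ per iteration step. Applying this and the triangle inequality, at each place $v\in M_K$ one obtains
\[\log\|\widetilde{\mathbf{p}}_{k+1}\|_v\leq d\log\|\widetilde{\mathbf{p}}_k\|_v+\log\|G\|_v+E_{v,k},\]
where $G$ denotes the tuple of all the $G_{i,\alpha}$, $E_{v,k}=0$ at non-archimedean $v$, and $E_{v,k}=O(k)$ at archimedean $v$. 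Summing over places gives
\[h(\widetilde{\mathbf{p}}_{k+1})\leq d\,h(\widetilde{\mathbf{p}}_k)+h(G)+O(k),\]
which unrolls to $h(\widetilde{\mathbf{p}}_k)=O(d^k)$ since $\sum_{j\geq 0}j d^{-j}$ converges.

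Finally, since $\pi(B)$ is defined by a $z$-monic form $F$, reducing each $\widetilde{p}_{k,i}$ modulo $F$ iterates the substitution $z^{\deg F}\mapsto -(F-z^{\deg F})$, and each such substitution multiplies the max coefficient norm by at most $(1+\|F-z^{\deg F}\|_v)$ at archimedean $v$ and $\max(1,\|F-z^{\deg F}\|_v)$ at non-archimedean $v$. Summing over places, $h(\mathbf{p}_k)\leq h(\widetilde{\mathbf{p}}_k)+O(\deg(\mathbf{p}_k))=O(d^k)$, as required. The main subtlety---and the reason for this particular order of operations---is that reducing modulo $F$ after every iterative step would incur a linear-in-degree error at every stage, yielding only the weaker recursion $h(\mathbf{p}_{k+1})\leq dh(\mathbf{p}_k)+O(d^{k+1})$ and hence the weaker bound $h(\mathbf{p}_k)=O(kd^k)$; reducing only once at the end avoids this accumulation.
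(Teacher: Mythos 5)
Your proof is essentially correct, but it takes a genuinely different route from the paper, so a comparison is worthwhile. The paper never leaves the homogeneous coordinate ring of $\pi(B)$: it fixes a presentation $\mathbf{F}$ of the coefficient map of $f$, builds $\mathbf{p}_{k+1}$ from $\mathbf{p}_k$ by the same substitution you use but with every product reduced, and invokes Bombieri--Gubler's multiplication lemma at each step to get the recursion $h(\mathbf{p}_{k+1})\leq d\,h(\mathbf{p}_k)+h(\mathbf{F})+d\log(1+\deg(\mathbf{p}_k))+C$ with $C$ depending only on the curve, which unrolls to $O(d^k)$ via \eqref{eq:weirdsum}. You instead clear denominators once, iterate unreduced in $K[x,y,z]$, control heights by elementary counting/Gau{\ss}-type bounds (per-step archimedean loss $O(\log\deg)=O(k)$, summable against $d^{-k}$), and reduce modulo the defining form only at the end, absorbing an additive $O(\deg(\widetilde{\mathbf{p}}_k))=O(d^k)$ charge; since the target bound is itself $O(d^k)$, this loss is harmless, and your argument is more self-contained than the paper's appeal to the cited lemma. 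Your closing ``subtlety,'' however, is mistaken: if the stage-$k$ entries are already reduced (so of $z$-degree $<\deg F$), the polynomial to be reduced at stage $k+1$ has $z$-degree at most $d(\deg F-1)$ plus a constant independent of $k$, so only boundedly many division steps are needed and the per-stage reduction cost is $O(\log\deg(\mathbf{p}_k))+O(1)$, not linear in the degree --- this is precisely why the paper's reduce-at-every-step recursion yields $O(d^k)$ rather than the $O(kd^k)$ you predict. Minor bookkeeping: the right-hand side of your inductive definition should involve $\widetilde{p}_{k,j}$ rather than $p_{k,j}$; the number of monomials of degree $e$ in three variables is $\binom{e+2}{2}$; each division step multiplies the archimedean norm by at most $1+\binom{\deg F+2}{2}\|F-z^{\deg F}\|_v$ (your estimate omits the monomial-count factor); and one should record, as your ``cancels in ratios'' remark implicitly requires, that $\widetilde{p}_{k,i}$ vanishes identically on $B$ exactly when the $i$th coordinate of $f^k(P)$ vanishes (true since $s\not\equiv 0$ and the $F_i$ have no common zero over $\overline{K(B)}$), so the reduced tuple really is a presentation. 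None of this affects your $O(d^k)$ conclusion.
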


\begin{proof} Let $\mathbf{p}_0$ be a presentation of the morphism $P:B\to\PP^N$, whose $i$th entry we denote $p_{0, i}$. As above, we may represent $f$ as a tuple of $N$ homogeneous forms of degree $d$ with coefficients in $K(B)$, thereby associating $f$ with a point in $\PP^{\binom{d+N}{N}(N+1)-1}_{K(B)}$, i.e., a morphism $B\to \PP^{\binom{d+N}{N}(N+1)-1}$ over $K$, which admits a presentation $\mathbf{F}$. The homogeneous form in $\mathbf{F}$ corresponding to the coefficient of the monomial $\mathfrak{m}$ in the $i$th entry of $f$ will be written $F_{i, \mathfrak{m}}$.
	
	Now, for $k\geq 0$, let
	\[p_{k+1, i}=\sum_{\deg(\mathfrak{m})=d} F_{i, \mathfrak{m}}\mathfrak{m}(p_{k, 0}, ..., p_{k, N}),\]
	where the sum is over all monomials of degree $d$ in $N+1$ variables. It is easy to check by induction that $\mathbf{p}_{k}$ is a presentation of $f^k(P)$.
	
	From \cite[Lemma~2.5.6, p.~48]{MR2216774} we have
	\[\deg(\mathbf{p}_{k+1})\leq d\deg(\mathbf{p}_k)+\deg(\mathbf{F}),\]
	whence
	\[\deg(\mathbf{p}_k)\leq d^k\left(\deg(\mathbf{p}_0)+\frac{1}{d-1}\deg(\mathbf{F})\right).\]

Again from 	 \cite[Lemma~2.5.6, p.~48]{MR2216774} we have
	\[h(\mathbf{p}_{k+1})\leq dh(\mathbf{p}_k)+h(\mathbf{F})+d\log(1+\deg(\mathbf{p}_k))+C_{17},\]
	where $C_{17}$ depends only on $B$, $\pi(B)$, and $d$.
	This gives
	\begin{multline*}
	h(\mathbf{p}_k)\leq d^kh(\mathbf{p}_0)+d^k\sum_{j=0}^{k-1}d^{-j}\log\left(1+d^j\left(\deg(\mathbf{p}_0)+\frac{1}{d-1}\deg(\mathbf{F})\right)\right)\\+\frac{d^k-1}{d-1}\left(h(\mathbf{F})+C_{17}\right),
	\end{multline*}
	which provides the claim $h(\mathbf{p}_k)=O(d^k)$ as $k\to\infty$ in light of~\eqref{eq:weirdsum}, which bounds the remaining sum with no dependence on $k$.
	\end{proof}

Now let $S$ be the set of places from Lemma~\ref{lem:gooddiv}, and let $s=\#S$.
\begin{lemma}
For each $k$ there is a morphism  $\phi_{D_k}:B\to \PP^{(g+1)^s-1}$ such that  $\phi_{D_k}^*\mathcal{O}(1)=\mathcal{O}(2gD_k)$, with $h\circ \phi_{D_k}=2g\href_{B, D_k}$, and admitting a presentation $\mathbf{q}_k$ satisfying
\[\deg(\mathbf{q}_k), h(\mathbf{q}_k)=O(d^k),\]
where the implied constants depend on $f$, $P$, $B$, and $N$, but not on $k$.
\end{lemma}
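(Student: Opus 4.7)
The target dimension $(g+1)^s-1$ is exactly the Segre dimension of $s$ copies of $\PP^g$, one for each point $\beta\in S$, so the natural plan is to build $\phi_{D_k}$ as a Segre join of $s$ Veronese-like powers of the fixed morphisms $\phi_\beta$. Write $D_k=\sum_{\beta\in S}m_{\beta,k}[\beta]$.

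For each $\beta\in S$ and each integer $m\geq 0$, I would form the morphism $\phi_\beta^{[m]}:B\to\PP^g$ whose coordinates are the $m$-th powers of the coordinates of $\phi_\beta$. Since the coordinates of $\phi_\beta$ have no common zero, neither do their $m$-th powers, so this defines a morphism with $(\phi_\beta^{[m]})^*\mathcal{O}(1)\cong\mathcal{O}(2gm[\beta])$ and $h\circ\phi_\beta^{[m]}=m\cdot h\circ\phi_\beta$ by inspection of the local contributions. Assuming all $m_{\beta,k}\geq 0$, I would then set
\[\phi_{D_k}=\mathop{\#}_{\beta\in S}\phi_\beta^{[m_{\beta,k}]}:B\to\PP^{(g+1)^s-1}.\]
The standard behaviour of the Segre join under pullback of $\mathcal{O}(1)$ and under the Weil height then yields $\phi_{D_k}^*\mathcal{O}(1)\cong\bigotimes_{\beta\in S}\mathcal{O}(2gm_{\beta,k}[\beta])=\mathcal{O}(2gD_k)$ and $h\circ\phi_{D_k}=\sum_\beta m_{\beta,k}\,h\circ\phi_\beta=2g\href_{B,D_k}$.

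To exhibit the presentation, fix once and for all a presentation $\mathbf{p}_\beta=(p_{\beta,0},\ldots,p_{\beta,g})$ of each $\phi_\beta$, set $\mathbf{p}_\beta^{[m]}=(p_{\beta,0}^m,\ldots,p_{\beta,g}^m)$ (which is a presentation of $\phi_\beta^{[m]}$ of degree $m\deg(\mathbf{p}_\beta)$), and take $\mathbf{q}_k=\mathop{\#}_{\beta\in S}\mathbf{p}_\beta^{[m_{\beta,k}]}$. Since Lemma~\ref{lem:gooddiv} gives $m_{\beta,k}=O(d^k)$, we immediately get $\deg(\mathbf{q}_k)=\sum_\beta m_{\beta,k}\deg(\mathbf{p}_\beta)=O(d^k)$. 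For the height, Gelfond's inequality bounds $h(p^m)\leq m\,h(p)+O(m\deg(p))$ coordinate-wise, and \cite[Lemma~2.5.6]{MR2216774} applied $s-1$ times for the Segre step then gives $h(\mathbf{q}_k)=O(d^k)$ as well.

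The main obstacle will be the case in which some $m_{\beta,k}<0$, since then $\mathcal{O}(2gm_{\beta,k}[\beta])$ has negative degree and cannot be the pullback of $\mathcal{O}(1)$ from any morphism $B\to\PP^g$, so the Veronese-power recipe above breaks down at that factor. I plan to sidestep this by first showing that the coordinate tuples representing the iterates $F^k(P)$ can be chosen---by extracting common factors supported on $S$ at each iteration---so that every $D_k$ is effective and supported on $S$, reducing to the case already handled. Failing that, replacing $D_k$ by a linearly equivalent effective divisor supported on $S$ perturbs $h\circ\phi_{D_k}$ only by a quantity absorbed into the $O(d^k)$ estimate, while leaving the class equality $\phi_{D_k}^*\mathcal{O}(1)\cong\mathcal{O}(2gD_k)$ intact.
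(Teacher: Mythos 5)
Your construction is essentially the paper's proof: the paper also takes the fixed morphisms $\phi_\beta$, composes with the $m_\beta$-th power map (i.e.\ raises the coordinates, and the presentation $\mathbf{p}_\beta$, to the $m_\beta$-th power componentwise), forms the Segre join over $\beta\in S$, and bounds $\deg$ and $h$ of the resulting presentation via \cite[Lemma~2.5.6]{MR2216774}, using that the coefficients of $D_k$ are $O(d^k)$; the exact identities $\phi_{D_k}^*\mathcal{O}(1)=\mathcal{O}(2gD_k)$ and $h\circ\phi_{D_k}=2g\href_{B,D_k}$ come out exactly as you say. (Minor point: for the power step, powers are taken in the coordinate ring of $\pi(B)$, so the height bound is again \cite[Lemma~2.5.6]{MR2216774} rather than a bare Gelfond estimate; the constant there depends on $\pi(B)$.) Concerning your final paragraph: the paper treats only the effective case and applies the construction to $D_k$ without further comment, so you are not missing anything the paper supplies; but be aware that your fallback of replacing $D_k$ by a linearly equivalent effective divisor would not prove the lemma as stated, since both the identity $h\circ\phi_{D_k}=2g\href_{B,D_k}$ and its use in Lemma~\ref{lem:hrefe} are tied to the specific divisor $D_k$ produced by Lemma~\ref{lem:gooddiv} ($\href_{B,D}$ depends on the representative $D$, not just its class, as the paper stresses), and the cost of a linear equivalence is not automatically $O(d^k)$ uniformly in $t$. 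If you want to address non-effective coefficients at all, the route compatible with the rest of the argument is your first suggestion (arranging the choices so that each $D_k$ is effective), not the second.
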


\begin{proof}
Let $D$ be any effective divisor supported on $S$, set $s=\# S$,  and consider the divisor $2gD=\sum 2gm_\beta[\beta]$. For each $\beta\in S$ we have a morphism $\phi_\beta:B\to \PP^g$ with $\phi_\beta^* \mathcal{O}(1) = \mathcal{O}(2g[\beta])$, and we fix a presentation $\mathbf{p}_\beta$ for this morphism. Now, if $e_m:\PP^g\to \PP^g$ is the $m$th power map, $\phi_\beta\circ e_{m_\beta}$ is presented by $\mathbf{p}_\beta^{m_\beta}$, where powers are taken component-wise. From the estimates in the proof of~\cite[Lemma~2.5.6, p.~48]{MR2216774}, we have
	\[\deg(\mathbf{p}_\beta^{m_\beta})=m_\beta\deg(\mathbf{p}_\beta),\]
	and
	\[h(\mathbf{p}_\beta^{m_\beta})\leq m_\beta h(\mathbf{p}_\beta)+(m_\beta-1)\log(1+\deg(\mathbf{p}_\beta))+C_{18}(m_\beta-1),\]
	for $C_{18}$ depending just on $\pi(B)$. In particular, $h(\mathbf{p}_\beta^{m_\beta})\leq m_\beta C_\beta$ for some $C_\beta$ depending on $\pi(B)$ and $\beta$. Now, the morphism
	\[\phi_{D}:=\phi_{\beta_1}^{m_{\beta_1}}\# \cdots \#\phi_{\beta_s}^{m_{\beta_s}}:B\to\PP^{(g+1)^s-1}\]
 has presentation given by
 \[\mathbf{p}_{\beta_1}^{m_{\beta_1}}\# \cdots \#\mathbf{p}_{\beta_s}^{m_{\beta_s}},\]
 and the usual estimates give
 	\[\deg(\mathbf{p}_{\beta_1}^{m_{\beta_1}}\# \cdots \#\mathbf{p}_{\beta_s}^{m_{\beta_s}})=\sum \deg (\mathbf{p}_{\beta_i}^{m_{\beta_i}})\leq \deg(2gD)\max\deg(\mathbf{p}_{\beta_i})\]
 	and
 	\begin{align*}
h(\mathbf{p}_{\beta_1}^{m_{\beta_1}}\# \cdots \#\mathbf{p}_{\beta_s}^{m_{\beta_s}})&\leq \sum h(\mathbf{p}_{\beta_i}^{m_{\beta_i}})+\sum \log(1+\deg(\mathbf{p}_{\beta_i}^{m_{\beta_i}}))+C_{19}s\\ 		&\leq \deg(2gD)\max\{C_\beta\}+\sum \deg (\mathbf{p}_{\beta_i}^{m_{\beta_i}}) + C_{19}s\\
&\leq \deg(2gD)C_{20},
 	\end{align*}
 	where $C_{20}$ depends on $S$, as long as $\deg(D)\geq 1$. 	Finally, note that 
 	\begin{align*}
 	h\circ\phi_{D}&= h\circ (\phi_{\beta_1}^{m_{\beta_1}}\# \cdots \#\phi_{\beta_s}^{m_{\beta_s}})\\
 	&=h\circ\phi_{\beta_1}^{m_1}+\cdots +h\circ\phi_{\beta_s}^{m_s}\\
 	&=m_1h\circ\phi_{\beta_1}+\cdots +m_sh\circ\phi_{\beta_s}\\
 	&=2g\href_{B, D}.	
 	\end{align*}
 	The lemma follows from applying this construction to $D_k$,  since
 	\[\deg(2gD_k)=2g\hgeom(f^k(P))=O(d^k).\]
 	
 \end{proof}

\begin{lemma}\label{lem:3kbound} We have
	\begin{equation}h(f^k(P)_t)=\href_{B, D_k}(t)+O(d^{3k}),\label{eq:effheight}\end{equation}
	where the implied constants depend on $f$ and $P$, but not on $t$ or $k$.
\end{lemma}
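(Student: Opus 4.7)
The plan is to compare two morphisms from $B$ to projective space whose pullbacks of $\mathcal{O}(1)$ are isomorphic, and then invoke Lemma~\ref{lem:explicitheight}. On the one hand, $\phi_{D_k}$ from the preceding lemma has $\phi_{D_k}^*\mathcal{O}(1)\cong\mathcal{O}(2gD_k)$, satisfies $h\circ\phi_{D_k}=2g\href_{B,D_k}$, and admits a presentation $\mathbf{q}_k$ with $\deg(\mathbf{q}_k),h(\mathbf{q}_k)=O(d^k)$. On the other hand, let $e_{2g}:\PP^N\to\PP^N$ denote the coordinate-wise $2g$-th power map, so that $e_{2g}^*\mathcal{O}(1)\cong\mathcal{O}(2g)$. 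Then $(e_{2g}\circ f^k(P))^*\mathcal{O}(1)\cong\mathcal{O}(2gD_k)$, matching $\phi_{D_k}^*\mathcal{O}(1)$. A presentation of $e_{2g}\circ f^k(P)$ is the tuple $\mathbf{p}_k^{2g}$ obtained by raising each entry of $\mathbf{p}_k$ componentwise to the $2g$-th power; iterating the product estimates of \cite[Lemma~2.5.6, p.~48]{MR2216774} (with $2g$ an absolute constant depending only on $B$) yields $\deg(\mathbf{p}_k^{2g})=2g\deg(\mathbf{p}_k)=O(d^k)$ and $h(\mathbf{p}_k^{2g})=O(d^k)$.

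Lemma~\ref{lem:explicitheight} applied to $\phi=e_{2g}\circ f^k(P)$ with presentation $\mathbf{p}_k^{2g}$ and $\psi=\phi_{D_k}$ with presentation $\mathbf{q}_k$ then gives the uniform estimate
\[\bigl|h_{e_{2g}\circ f^k(P)}(t)-h_{\phi_{D_k}}(t)\bigr|\ll \max\{\deg(\mathbf{p}_k^{2g}),\deg(\mathbf{q}_k)\}^2\cdot O(d^k)=O(d^{3k}),\]
with implied constants depending on $f$, $P$, and $B$ but not on $t$ or $k$, and valid for all $t$ outside the fixed finite bad set where some specialization fails. Since coordinate-wise $2g$-th powering scales the log-norm by $2g$, we have $h\circ e_{2g}=2g\cdot h$ on $\PP^N$; hence $h_{e_{2g}\circ f^k(P)}(t)=2g\,h(f^k(P)_t)$, while by construction $h_{\phi_{D_k}}(t)=2g\href_{B,D_k}(t)$. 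Dividing by $2g$ yields~\eqref{eq:effheight}.

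The essential point is that Lemma~\ref{lem:explicitheight} carries a cost of $\deg^2$ times height, and since every quantity in sight is of scale $d^k$, this cubic total produces precisely the exponent $3$. No step poses a substantive obstacle beyond bookkeeping; the only mild subtlety worth flagging is the use of the $2g$-th power map (rather than a Veronese embedding) to match the pullback line bundle, which keeps the presentation $\mathbf{p}_k^{2g}$ of controlled size via iterated Gauss-type norm estimates.
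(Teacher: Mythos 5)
Your proof is correct and follows essentially the same route as the paper: compare the morphism $f^k(P)$, twisted so that it pulls back $\mathcal{O}(1)$ to $\mathcal{O}(2gD_k)$, against $\phi_{D_k}$ via Lemma~\ref{lem:explicitheight}, using the presentations of degree and height $O(d^k)$ to get the $\deg^2\times$ height $=O(d^{3k})$ bound. The only difference is cosmetic: you make explicit (via the coordinate-wise $2g$-th power map $e_{2g}$, the presentation $\mathbf{p}_k^{2g}$, and the exact identity $h\circ e_{2g}=2g\,h$) a step the paper's proof leaves implicit when it writes $f^k(P)^*\mathcal{O}(2g)\cong\phi_{D_k}^*\mathcal{O}(1)$.
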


\begin{proof} 
This is the combination of the previous few lemmas. We have \[f^k(P)^*\mathcal{O}(2g)\cong \mathcal{O}(2gD_k)\cong \phi_{D_k}^*\mathcal{O}(1),\] and also that $f^k(P):B\to \PP^N$ and $\phi_{D_k}:B\to \PP^{(g+1)^s-1}$ admit presentations of height and degree at most $O(d^k)$. The Lemma~\ref{lem:explicitheight} completes the proof.
\end{proof}

Finally, we note the following result, analogous to Lemma~\ref{lem:famheights} above.
\begin{lemma}\label{lem:canheightfibbound}
There is a divisor $D$ on $B$ such that for all but finitely many $t$,
\[\hat{h}_{f_t}=h+O(h_{B, D}(t)).\]	
\end{lemma}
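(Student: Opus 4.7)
The plan is to imitate the proof of Lemma~\ref{lem:famheights} from Section~\ref{sec:rational}, with the Weil height $h(t)$ on $\PP^1$ replaced by $h_{B, D}(t)$ for a suitably chosen divisor $D$ on $B$. First I would write $f = [F_0 : \cdots : F_N]$ with each $F_i$ homogeneous of degree $d$ in $\mathbf{X}$ and with coefficients $c_{i, \mathfrak{m}} \in K(B)$, and invoke Macaulay's resultant construction (as already used in the proof of Lemma~\ref{lem:famheights}) to obtain homogeneous forms $A_{i, j}(\mathbf{X})$ of degree $e - d$, where $e = (N+1)(d-1) + 1$, with coefficients $b_{i, j, \mathfrak{n}} \in K(B)$, and a nonzero resultant $a = \Res(F_0, \ldots, F_N) \in K(B)^*$, satisfying $a \cdot X_i^{e} = \sum_j F_j(\mathbf{X}) A_{i, j}(\mathbf{X})$ for each $i$.

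The tuples $(c_{i, \mathfrak{m}})$ and $(b_{i, j, \mathfrak{n}})$ define morphisms from $B$ to projective spaces, and $a$ defines a morphism $B \to \PP^1$. I would then choose $D$ to be any divisor on $B$ dominating the pullbacks of $\mathcal{O}(1)$ along these three morphisms, so that by standard height machinery each of
\[\sum_{v \in M_K} n_v \log\max_{i, \mathfrak{m}} |c_{i, \mathfrak{m}}(t)|_v, \qquad \sum_{v \in M_K} n_v \log\max_{i, j, \mathfrak{n}} |b_{i, j, \mathfrak{n}}(t)|_v, \qquad -\sum_{v \in M_K} n_v \log |a(t)|_v\]
is at most $h_{B, D}(t) + O(1)$. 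For $t$ outside the finite set on which some coefficient has a pole or $a(t) = 0$, the specialization $f_t$ is an endomorphism of $\PP^N$ of degree $d$. For any such $t$ and any $Q \in \PP^N(\overline{K})$, the usual triangle/ultrametric inequality applied termwise to each $F_i(Q)$, summed over places, yields the upper bound
\[h(f_t(Q)) \leq dh(Q) + h_{B, D}(t) + O(1);\]
evaluating the Macaulay identity at $Q$, taking absolute values at each place, and summing yields a matching lower bound $dh(Q) \leq h(f_t(Q)) + C \cdot h_{B, D}(t) + O(1)$ with $C$ independent of $Q$ and $t$. The standard telescoping sum argument (as at the end of the proof of Lemma~\ref{lem:famheights}) then converts $|h(f_t(Q)) - dh(Q)| = O(h_{B, D}(t))$ into $|\hat{h}_{f_t}(Q) - h(Q)| = O(h_{B, D}(t))$, as required.

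The main obstacle is the bookkeeping involved in converting local bounds at each place into a global $h_{B, D}$-bound; the key point is that for any nonzero $c \in K(B)^*$ viewed as a morphism $c : B \to \PP^1$, one has $\sum_{v \in M_K} n_v \log^+|c(t)|_v = h_{B, (c)_\infty}(t) + O(1)$, with an analogous statement for tuples of functions giving a morphism $B \to \PP^{r-1}$. Aggregating the contributions of the finitely many coefficients of the $F_i$, the $A_{i, j}$, and of $1/a$ into a single divisor $D$ is then just a matter of summing finitely many divisors on $B$, and the constants in the resulting inequalities do not depend on $Q$ precisely because the Macaulay identity provides a single algebraic relation independent of where we evaluate.
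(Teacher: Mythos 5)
Your proposal is correct and is essentially the paper's own argument: the paper proves this lemma by simply observing that the proof of~\eqref{eq:fibreheight} in Lemma~\ref{lem:famheights} (the Macaulay resultant identity plus the triangle inequality, followed by the telescoping-sum argument), which goes back to Call--Silverman, carries over verbatim with $h(t)$ replaced by $h_{B,D}(t)$ for a divisor $D$ dominating the coefficient data. Your write-up just makes that adaptation explicit, so there is nothing to add.
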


\begin{proof}
	This is already in~\cite{MR1255693}, but the same proof that gives~\eqref{eq:fibreheight} in Lemma~\ref{lem:famheights} works here.
\end{proof}

\begin{proof}[Proof of Theorem~\ref{th:main} with $B$ irrational] 
As noted in Lemma~\ref{lem:red}, it suffices to treat the case $X=\PP^N$ and $L=\mathcal{O}(1)$, so assume we are in that case.   Also note that, since \begin{equation}\label{eq:cauchys}h_{B, D}=h_{B, E}+O(h_{B, E}^{1/2})\end{equation} for any two height functions of the same (positive) degree on $B$ (see~\cite[Remark~9.3.9, p.~293]{MR2216774}, and note that this is true for heights relative to $\RR$-divisors, as well), it suffices to prove the result for a particular height function. 

Let $A$ be any ample divisor on $B$, and let $h_{B, A}$ be a corresponding Weil height. Note that we have
\[h_{B, D}=O\left( h_{B, A}\right)\] as $h_{B, A}\to\infty$,
for any Weil height $h_{B, D}$ relative to any $\RR$-divisor $D$, and so in particular 
we now have (for $h_{B, A}(t)$ sufficiently large)
\[\left|\hat{h}_{f_t}(Q)-h(Q)\right|\leq C_{21}h_{B, A}(t),\]
for all $Q\in\PP^N$ by Lemma~\ref{lem:canheightfibbound},
\[|h(f^k(P)_t)-\href_{B, D_k}(t)|\leq C_{22}d^{3k}\]
by Lemma~\ref{lem:3kbound}
and
\[|\href_{B, D_k}-d^k\href_{D(F, P)}|\leq C_{23} h_{B, A}(t)\]
by Lemma~\ref{lem:hrefe}.

For $t\in B(\overline{K})$, take $k\geq 0$ with
\[d^{3k}\leq h_{B, A}(t)<d^{3(k+1)}.\]
	We then have 
\begin{align*}
\frac{h_{B, A}^{1/3}}{d}\left|\hat{h}_{f_t}(P_t)-\href_{D(F, P)}(t)\right|&\leq
d^k\left|\hat{h}_{f_t}(P_t)-\href_{D(F, P)}(t)\right|\\
&= \left|\hat{h}_{f_t}(f^k(P)_t)-d^k\href_{D(F, P)}(t)\right|\\
& \leq |\hat{h}_{f_t}(f^k(P)_t)-h(f^k(P)_t)|\\
&\quad +|h(f^k(P)_t)-\href_{B, D_k}(t)|\\
&\quad +|\href_{B, D_k}(t)-d^k\href_{D(F, P)}(t)|\\
& \leq C_{21}h_{B, A}(t)\\
&\quad +C_{22}d^{3k}\\
&\quad +C_{23}h_{B, A}(t)\\
&\leq h_{B, A}(t) (C_{21}+C_{22}+C_{23}),
\end{align*}
and hence
\[\left|\hat{h}_{f_t}(P_t)-\href_{D(F, P)}(t)\right|\leq h_{B, A}(t)^{2/3}d(C_{21}+C_{22}+C_{23}).\]

But now, $\href_{D(F, P)}$ is the Weil height relative to some $\RR$-divisor of degree $\hat{h}_f(P)$ on $B$. By~\eqref{eq:cauchys} if $h$ is a height on $B$ relative to any divisor of degree $1$,
\[\href_{D(F, P)}=\hat{h}_f(P)h(t)+O(h(t)^{1/2}),\]
and since $h_{B, A}=O( h)$ we have (since $(x+y)^\rho\leq x^\rho+y^\rho$ when $0<\rho<1$)
\[\hat{h}_{f_t}(P_t)=\hat{h}_f(P)h(t)+O(h(t)^{2/3})\]
as $h(t)\to\infty$.
\end{proof}

\bibliography{variation}

\begin{thebibliography}{10}

\bibitem{MR292843}
Allen Altman.
\newblock The size function of abelian varieties.
\newblock {\em Trans. Amer. Math. Soc.}, 164:153--161, 1972.

\bibitem{MR2492995}
Matthew Baker.
\newblock A finiteness theorem for canonical heights attached to rational maps
  over function fields.
\newblock {\em J. Reine Angew. Math.}, 626:205--233, 2009.

\bibitem{MR2202175}
Robert~L. Benedetto.
\newblock Heights and preperiodic points of polynomials over function fields.
\newblock {\em Int. Math. Res. Not.}, (62):3855--3866, 2005.

\bibitem{MR2216774}
Enrico Bombieri and Walter Gubler.
\newblock {\em Heights in {D}iophantine geometry}, volume~4 of {\em New
  Mathematical Monographs}.
\newblock Cambridge University Press, Cambridge, 2006.

\bibitem{MR1255693}
Gregory~S. Call and Joseph~H. Silverman.
\newblock Canonical heights on varieties with morphisms.
\newblock {\em Compositio Math.}, 89(2):163--205, 1993.

\bibitem{dm}
Laura DeMarco and Niki~Myrto Mavraki.
\newblock Variation of canonical height and equidistribution.
\newblock {\em American J.\ Math.}, 142:443--473, 2020.

\bibitem{MR1183402}
Laurent Denis.
\newblock Hauteurs canoniques et modules de {D}rinfel'd.
\newblock {\em Math. Ann.}, 294(2):213--223, 1992.

\bibitem{MR1995861}
Najmuddin Fakhruddin.
\newblock Questions on self maps of algebraic varieties.
\newblock {\em J. Ramanujan Math. Soc.}, 18(2):109--122, 2003.

\bibitem{MR3864204}
Charles Favre and Thomas Gauthier.
\newblock Continuity of the {G}reen function in meromorphic families of
  polynomials.
\newblock {\em Algebra Number Theory}, 12(6):1471--1487, 2018.

\bibitem{gv}
Thomas Gauthier and Gabriel Vigny.
\newblock The geometric dynamical northcott and bogomolov properties.
\newblock preprint.

\bibitem{MR3335283}
D.~Ghioca, L.-C. Hsia, and T.~J. Tucker.
\newblock Preperiodic points for families of rational maps.
\newblock {\em Proc. Lond. Math. Soc. (3)}, 110(2):395--427, 2015.

\bibitem{MR3158237}
Dragos Ghioca and Niki~Myrto Mavraki.
\newblock Variation of the canonical height in a family of rational maps.
\newblock {\em New York J. Math.}, 19:873--907, 2013.

\bibitem{MR3181564}
Patrick Ingram.
\newblock Variation of the canonical height for a family of polynomials.
\newblock {\em J. Reine Angew. Math.}, 685:73--97, 2013.

\bibitem{MR3180596}
Patrick Ingram.
\newblock Canonical heights for {H}\'{e}non maps.
\newblock {\em Proc. Lond. Math. Soc. (3)}, 108(3):780--808, 2014.

\bibitem{MR3257554}
Patrick Ingram.
\newblock A lower bound for the canonical height associated to a {D}rinfeld
  module.
\newblock {\em Int. Math. Res. Not. IMRN}, (17):4879--4916, 2014.

\bibitem{MR3436155}
Patrick Ingram.
\newblock Variation of the canonical height for polynomials in several
  variables.
\newblock {\em Int. Math. Res. Not. IMRN}, (24):13545--13562, 2015.

\bibitem{MR3885169}
Patrick Ingram.
\newblock Canonical heights for correspondences.
\newblock {\em Trans. Amer. Math. Soc.}, 371(2):1003--1027, 2019.

\bibitem{MR2198324}
Zbigniew Jelonek.
\newblock On the effective {N}ullstellensatz.
\newblock {\em Invent. Math.}, 162(1):1--17, 2005.

\bibitem{MR1853355}
Teresa Krick, Luis~Miguel Pardo, and Mart\'{\i}n Sombra.
\newblock Sharp estimates for the arithmetic {N}ullstellensatz.
\newblock {\em Duke Math. J.}, 109(3):521--598, 2001.

\bibitem{MR1878556}
Serge Lang.
\newblock {\em Algebra}, volume 211 of {\em Graduate Texts in Mathematics}.
\newblock Springer-Verlag, New York, third edition, 2002.

\bibitem{MR1577000}
F.~S. MacAulay.
\newblock Some {F}ormulae in {E}limination.
\newblock {\em Proc. Lond. Math. Soc.}, 35:3--27, 1903.

\bibitem{MR704399}
D.~W. Masser and G.~W\"{u}stholz.
\newblock Fields of large transcendence degree generated by values of elliptic
  functions.
\newblock {\em Invent. Math.}, 72(3):407--464, 1983.

\bibitem{my}
Niki~Myrto Mavraki and Hexi Ye.
\newblock Quasi-adelic measures and equidistribution on $\mathbb{P}^1$.
\newblock preprint.

\bibitem{MR1779799}
Atsushi Moriwaki.
\newblock Arithmetic height functions over finitely generated fields.
\newblock {\em Invent. Math.}, 140(1):101--142, 2000.

\bibitem{MR703488}
Joseph~H. Silverman.
\newblock Heights and the specialization map for families of abelian varieties.
\newblock {\em J. Reine Angew. Math.}, 342:197--211, 1983.

\bibitem{MR1115546}
Joseph~H. Silverman.
\newblock Rational points on {$K3$} surfaces: a new canonical height.
\newblock {\em Invent. Math.}, 105(2):347--373, 1991.

\bibitem{MR692114}
J.~Tate.
\newblock Variation of the canonical height of a point depending on a
  parameter.
\newblock {\em Amer. J. Math.}, 105(1):287--294, 1983.

\bibitem{MR1311351}
Shouwu Zhang.
\newblock Small points and adelic metrics.
\newblock {\em J. Algebraic Geom.}, 4(2):281--300, 1995.

\end{thebibliography}
\bibliographystyle{plain}

\appendix

\section{Two facts from linear algebra}

For the arXiv version of this paper, we include two proofs from linear algebra, alluded to in Section~\ref{sec:rational}. First, an effective, special case of Hilbert's Nullstellensatz with an easy proof. This is surely known, but the author had trouble finding a reference with exactly this statement (as opposed to the \emph{a priori} slightly weaker statement that $\deg(f_iA_i)\leq 2d-1$).

\begin{lemma}
Let $k$ be an infinite field, and let $f_0, ..., f_N$ be polynomials with coefficients in $k$ and of degree at most $d$. Then there are polynomials $A_0, ..., A_N$ of degree at most $d-1$ such that
\[1=f_0A_0+\cdots + f_NA_N\]
if and only if $f_0, ..., f_N$ have no nontrivial common factor.
\end{lemma}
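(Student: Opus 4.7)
The ``only if'' direction is immediate: any nontrivial common factor of the $f_i$ would divide every $k[t]$-linear combination $\sum A_i f_i$ and so could not produce $1$. For the ``if'' direction, I would reduce to the case $N = 1$, which is simply B\'ezout: if $\gcd(f_0, f_1) = 1$ in $k[t]$, the extended Euclidean algorithm produces $A_0, A_1 \in k[t]$ with $A_0 f_0 + A_1 f_1 = 1$, $\deg A_0 < \deg f_1 \leq d$, and $\deg A_1 < \deg f_0 \leq d$, so both degrees are at most $d - 1$.

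To reduce $N \geq 2$ to this case, I would use the infiniteness of $k$ to choose $c_1, \ldots, c_N \in k$ so that $g := c_1 f_1 + \cdots + c_N f_N$ is coprime to $f_0$ in $k[t]$. Granting such a $g$, which has degree at most $d$, the $N = 1$ case produces $A_0, B \in k[t]$ of degree $\leq d - 1$ with $A_0 f_0 + B g = 1$, and setting $A_i := c_i B$ for $i \geq 1$ yields $1 = \sum_{i=0}^N A_i f_i$ with every $\deg A_i \leq d - 1$, as required.

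The only real step is the selection of $c = (c_1, \ldots, c_N)$. Let $\alpha_1, \ldots, \alpha_r \in \overline{k}$ be the roots of $f_0$ (we may assume $f_0 \neq 0$, else we simply discard it and induct on $N$). For each $j$ the vector $v_j := (f_1(\alpha_j), \ldots, f_N(\alpha_j))$ is nonzero, since otherwise the minimal polynomial of $\alpha_j$ over $k$ would be a nontrivial common factor of all of the $f_i$. The ``bad'' set $\{c \in k^N : g(\alpha_j) = 0\} = \{c : c \cdot v_j = 0\}$ is thus a proper $k$-linear subspace of $k^N$, and since a vector space over an infinite field is not a union of finitely many proper subspaces, we can choose $c$ lying outside all $r$ of these bad subspaces. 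For such $c$, $\gcd(f_0, g)$ divides $f_0$ yet shares no root with $f_0$ in $\overline{k}$, so it equals $1$. The main (indeed essentially only) obstacle in the argument is this generic-linear-combination step, and it is precisely here that the infiniteness of $k$ is used.
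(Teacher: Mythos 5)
Your argument is correct, including the degree bookkeeping: the generic combination $g=c_1f_1+\cdots+c_Nf_N$ has degree at most $d$, so the B\'ezout pair for the coprime polynomials $f_0,g$ satisfies $\deg A_0<\deg g\leq d$ and $\deg B<\deg f_0\leq d$, and the edge cases ($f_0$ constant, some $f_i$ zero, $d$ small) are harmless. It shares with the paper the one essential trick — choose a generic linear combination of $f_1,\ldots,f_N$ by noting that $k^N$, with $k$ infinite, is not a union of finitely many proper subspaces — but the surrounding mechanism is genuinely different. You argue directly, reducing to the classical two-polynomial B\'ezout identity with the Euclidean-algorithm degree bounds, and your finitely many bad subspaces are the kernels $\{c: c\cdot v_j=0\}$ indexed by the roots $\alpha_j$ of $f_0$ in $\overline{k}$. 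The paper instead argues contrapositively and never leaves $k$ or invokes Euclid: it views $a=f_0A_0+\cdots+f_NA_N$ as a homogeneous linear system with $2d$ equations, adjoins a generic cofactor $B$ to get $2d+1$ unknowns so a nontrivial solution exists, deduces from $a=0$ that $f_0$ and $\alpha_1f_1+\cdots+\alpha_Nf_N$ share a factor, and then covers $k^N$ by the subspaces $V_s$ indexed by the monic divisors $s$ of $f_0$ over $k$. Your route is shorter and rests on standard cited facts; the paper's linear-algebra formulation has the advantage of staying within $k$ (no algebraic closure, no appeal to the B\'ezout degree bound) and of matching the Cramer's-rule machinery (Lemma~\ref{lem:cramer}) that the rest of Section~\ref{sec:rational} uses to control heights of the resulting coefficients.
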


\begin{proof}
If the polynomials have a common non-trivial factor, then of course we cannot write 1 in this way. Also, if none of the polynomials $f_i$ have degree $d$, we can apply a previous case of the theorem, so without loss of generality $\deg(f_0)=d$.

	The equation
	\[a=f_0A_0+\cdots + f_NA_N\]
defines a homogeneous system of linear equations, obtained by identifying coefficients of the same power of the variable on both sides. There are $2d$ equations, and $(N+1)d+1$ unknowns ($a$, and the coefficients of the $A_i$). Suppose there is no solution with $a=1$, so that all solutions have $a=0$. We will show that the $f_i$ must have a common factor.

Let $B$ be a polynomial  with generic coefficients, and for any $\alpha_1, ..., \alpha_N\in k$ consider the system of equations obtained by identifying coefficients of powers of the variable in
\[a=f_0A_0+B(\alpha_1f_1+\cdots +\alpha_N f_N).\]
Any nontrivial solution to this gives a nontrivial solution to the original equation, so all solutions must still have $a=0$.
Since this new system has $2d+1$ unknowns, though, there must be a non-trivial solution. Also, since $f_0\neq 0$, and $A_0=B=0$ is the trivial solution, we must have $B\neq 0$ in any nontrivial solution, so
\[\frac{\alpha_1 f_1+\cdots + \alpha_N f_N}{f_0}=-\frac{A_0}{B},\]
where the right-hand-side is a rational function of degree at most $d-1$. It follows that $f_0$ and $\alpha_1f_1+\cdots +\alpha_N f_N$ have a nontrivial common factor (since $f_0$ has degree $d$, but the ratio has degree strictly less).

 For each of the finitely many non-trivial monic divisors $s\mid f_0$, let \[V_s=\{(\alpha_1, ..., \alpha_N)\in k^N:s\mid \alpha_1f_1+\cdots +\alpha_N f_N\},\]
 noting that $V_s$ is a linear subspace of $k^N$. By what we have shown, the $V_s$ cover $k^N$, but since $k$ is infinite, $k^N$  cannot be covered by finitely many proper subspaces. Thus we have $V_s=k^N$ for some nontrivial $s\mid f_0$, and hence $s\mid f_i$ for all $i$.
 \end{proof}

\begin{proof}[Proof of Lemma~\ref{lem:cramer}]
First note that $r\leq p$, and the case of equality is ruled out by the existence of a non-trivial solution. On the other hand, if $r=0$ then any values of the $x_j$ yield a solution, and the claim is trivially true. So we will take $1\leq r\leq p-1$.

Suppose our system of equations is
\[a_{i,1}x_1+\cdots + a_{i, p}x_p=0\]
for $1\leq i\leq q$. Since the system has rank $r$, there is some non-vanishing $r\times r$ minor of the coefficient matrix, 	and without loss of generality (permuting equations and variables) we may assume that this is the top left minor. Fix $1\leq j\leq r$, and let $C_i$ be the cofactor of the entry $a_{i, j}$, so that this minor has determinant
\[\delta = a_{1, j}C_1+\cdots +a_{r, j}C_r\neq 0.\]
Multiplying the $i$th equation by $C_i$ and summing, then, yields an equation in which the coefficient of $X_j$ is $\delta$. On the other hand, for $1\leq k\leq r$ and $k\neq j$, the coefficient of $x_k$ in this new equation is $C_1a_{1, k}+\cdots + C_ra_{r, k}=0$, because this is the determinant of a matrix with a repeated column. Finally, for $k>r$, the coefficient of $x_k$ in this new equation is
\[\delta_{j, k}=C_1a_{1, k}+\cdots +C_ra_{r, k}\] which is, up to sign,
the determinant of some other $r\times r$ submatrix of the coefficient matrix.

So from the original system we have deduced a new system of equations
\[\delta x_i +\delta_{i, r+1}x_{r+1}+\cdots + \delta_{i, p}x_p=0\quad (1\leq i\leq r),\]
where $\delta$ and the $\delta_{i, j}$ are all $r\times r$ signed minors of the coefficient matrix, and $\delta\neq 0$. This new system  has rank $r$, though, and so is equivalent to the original system.

If $r+1\leq s\leq p$, our solution is $x_s=\delta$, $x_i=-\delta_{i, s}$ for $1\leq i\leq r$, and $x_i=0$ for all other $i$. If $1\leq s\leq r$, then note that we cannot have $\delta_{s, j}=0$ for all $r+1\leq j\leq p$, or else our original equations would force $x_s=0$. So we choose some $j$ with $\delta_{s, j}\neq 0$, set $x_i=\delta_{i, j}$ for $1\leq i\leq r$,  $x_j=-\delta$, and $x_i=0$ otherwise. In any case, the $x_i$ are signed minors of the coefficient matrix, and $x_s\neq 0$.
\end{proof}

\end{document}